\documentclass[a4paper,leqno,12pt]{amsart} 
\setlength{\textheight}{23cm}
\setlength{\textwidth}{16cm}
\setlength{\oddsidemargin}{0cm}
\setlength{\evensidemargin}{0cm}
\setlength{\topmargin}{0cm}

\usepackage{lmodern}
\usepackage[T1]{fontenc} 
\usepackage[utf8]{inputenc} 

\usepackage{amssymb}
\usepackage[mathcal]{euscript} 

\usepackage{color}

\newcommand{\az}{\color{blue}}




\newcommand{\bydef}{:=}

\newcommand{\ol}[1]{\overline{#1}}

\newcommand{\id}{\mathrm{id}}
\newcommand{\proj}{\mathrm{proj}}

\newcommand{\diag}{\mathrm{diag}}

\newcommand{\tr}{\mathrm{tr}}


\newcommand{\bi}{\mathbf{i}}
\newcommand{\bj}{\mathbf{j}}
\newcommand{\bk}{\mathbf{k}}


 
\newcommand{\cC}{\mathcal{C}}

\newcommand{\cL}{\mathcal{L}}

\newcommand{\cS}{\mathcal{S}}

\newcommand{\cV}{\mathcal{V}}
\newcommand{\cW}{\mathcal{W}}

\newcommand{\sig}{\mathrm{sign}}
\newcommand{\frg}{{\mathfrak g}}
\newcommand{\frh}{{\mathfrak h}} 
\newcommand{\frc}{{\mathfrak
c}} 
\newcommand{\frd}{{\mathfrak d}}
 
\newcommand{\fre}{{\mathfrak e}} 

\newcommand{\frs}{{\mathfrak s}} 
\newcommand{\fra}{{\mathfrak a}}



\newcommand{\ZZ}{\mathbb{Z}}

\newcommand{\RR}{\mathbb{R}} 
\newcommand{\CC}{\mathbb{C}}
\newcommand{\HH}{\mathbb{H}} 

\newcommand{\FF}{\mathbb{F}} 



\DeclareMathOperator{\Hom}{\mathrm{Hom}}
\DeclareMathOperator{\End}{\mathrm{End}}






\DeclareMathOperator{\inder}{\mathfrak{inder}}

\DeclareMathOperator{\Mat}{\mathrm{Mat}}


\DeclareMathOperator{\trace}{tr}
\DeclareMathOperator{\espan}{\mathrm{span}}


\newcommand{\Lie}{\mathrm{Lie}}


\newcommand{\ad}{\mathrm{ad}}
\newcommand{\Ad}{\mathrm{Ad}}

\newcommand{\frsl}{{\mathfrak{sl}}}
\newcommand{\frsp}{{\mathfrak{sp}}}
\newcommand{\frso}{{\mathfrak{so}}}

\newcommand{\frgl}{{\mathfrak{gl}}}

\newcommand{\frsu}{{\mathfrak{su}}}

\newcommand{\fru}{{\mathfrak{u}}}














\newcommand{\subo}{_{\bar 0}} 
\newcommand{\subuno}{_{\bar 1}}


\newcommand{\Cl}{\mathfrak{Cl}} 


\newcommand{\bup}{\textup{b}}
\newcommand{\baup}{\textup{b}_{\textup{a}}}
\newcommand{\qup}{\textup{q}}

\newcommand{\hup}{\textup{h}}






\newcommand{\SU}{\mathrm{SU}}
\newcommand{\spf}{\mathfrak{sp}}
\newcommand{\der}{\mathfrak{der}}

\newcommand{\slf}{\mathfrak{sl}}
\newcommand{\gl}{\mathfrak{gl}}

\newcommand{\g}{\mathfrak{g}}

\newtheorem{theorem}{Theorem}[section]
\newtheorem{proposition}[theorem]{Proposition}
\newtheorem{lemma}[theorem]{Lemma}
\newtheorem{corollary}[theorem]{Corollary}

\theoremstyle{definition} 
\newtheorem{df}[theorem]{Definition}

\theoremstyle{remark} \newtheorem{remark}[theorem]{Remark}
\numberwithin{equation}{section}

\begin{document}

\title[ Real  symplectic triple systems ]{  Classification of real simple symplectic triple systems}

\author[C.~Draper]{ 
Cristina Draper${}^*$}
\address[C.\,D.]{%
Departamento de Matem\'{a}tica Aplicada,  Escuela de Ingenier\'\i as Industriales,\endgraf
Universidad de M\'{a}laga, 
 29071 M\'{a}laga,  
Spain
}
\email{cdf@uma.es}

\author[A.~Elduque]{ 
Alberto Elduque${}^\star$ 
}  
\address[A.\,E.]{Departamento de
Matem\'{a}ticas e Instituto Universitario de Matem\'aticas y
Aplicaciones, Universidad de Zaragoza, 50009 Zaragoza, Spain}
\email{elduque@unizar.es}

\subjclass[2010]{Primary  
17A40,   	
Secondary 
17B60.   	
}
\keywords{Symplectic triple system; $\ZZ/2$-graded simple Lie algebra; 
weak isomorphism; Einstein manifold;  }
\thanks{${}^*$ Supported by    Junta de Andaluc\'{\i}a  through projects  FQM-336, UMA18-FEDERJA-119, and PAIDI project P20\_01391, and  by the Spanish Ministerio de Ciencia e Innovaci\'on   through projects  PID2019-104236GB-I00 and PID2020-118452GB-I00, all of them with FEDER funds.} 
\thanks{${}^\star$ Supported by grant
MTM2017-83506-C2-1-P (AEI/FEDER, UE) and by grant E22\_17R (Gobierno de 
Arag\'on, Grupo de referencia ``\'Algebra
y Geometr{\'\i}a''), cofunded by Feder 2014-2020 ``Construyendo
Europa desde Arag\'on''.}

\begin{abstract}
The  simple symplectic triple systems over the real numbers are  classified up to isomorphism, and linear models of all of them are provided. 

Besides the split cases, one for each complex simple Lie algebra, there are two kinds of non-split real simple symplectic triple systems with classical enveloping algebra, called unitarian and quaternionic types, and five non-split
real simple symplectic triple systems with exceptional enveloping algebra.
\end{abstract}

\maketitle


\section{Introduction }

Symplectic triple systems constitute a ternary structure that appears widely in the literature, although often hidden under a variety of algebraic and geometric objects. They were introduced in 1975  in \cite{ternarias} by Yamaguti and Asano in their attempt to understand the ternary structures involved in Freudenthal's construction of the exceptional Lie algebras (see for instance,  \cite{FreuII}). Thus, for $T$ a symplectic triple system, then $T\oplus T$ is a Lie triple system, and, in general, symplectic triple systems are variations on the Freudenthal triple systems \cite{Mey}, or the balanced symplectic ternary algebras \cite{Ferrar}, among others.

Although several of these ternary structures are related, each of them with its own set of defining identities and most of them participating on the construction of Lie algebras,  the concrete point of view provided by  symplectic triple systems not only has algebraic interest but also has many applications to Differential Geometry. For instance, complex symplectic triple systems appear naturally in the study of 
3-Sasakian homogeneous manifolds $M=G/H$ \cite[Remark~4.11]{nues3Sas}: the complexification of the reductive pair $(\Lie(G), \Lie(H))$ coincides necessarily with the standard enveloping algebra and the inner derivation algebra of a simple complex symplectic triple system.
The paper \cite{hol} takes advantage of such facts to write, for some convenient invariant affine connections, the (complexification of the) curvature tensors of any 3-Sasakian homogeneous manifold  in terms of the related triple system  and to compute the corresponding holonomy algebras,  supporting in this way the choice of some specific affine connections suitable for studying  (not necessarily homogeneous) 3-Sasakian manifolds. Inspired by \cite{nues3Sas},   an Einstein manifold  is constructed related to each simple real  symplectic triple system in \cite{CM}. In fact, the proof in \cite{CM} that the Ricci tensor is a multiple of the metric is purely algebraic, making use only of the properties of the symplectic triple systems. Precisely this work    is part of our motivation to classify completely the simple real  symplectic triple systems, in order to obtain an algebraic description of such family of   Einstein manifolds (of their tangent spaces) which permits to do explicit computations and to give a concrete algebraic treatment. These manifolds appear independently in \cite{cortes1}, where Alekseevsky and Cort\'es study para-quaternionic K\"ahler manifolds from a quite different approach based on   some kind of twistor spaces. Our real symplectic triple systems   appear (see \cite{Wallach}) as a quaternionic  discrete series of representations of simple Lie groups $G$ such that the symmetric space $G/K$ (for $K$ its  maximal  compact  subgroup) has a $G$-invariant quaternionic structure, which forces $K$ to contain a normal subgroup   isomorphic to $\SU(2)$. This implies that the representation (corresponding to the triple system as a module for its inner derivation algebra)  can be constructed as the cohomology  of certain   vector bundles. Also, these representations correspond to quaternionic representations of the \emph{quaternionic real} forms of exceptional groups of real rank $4$ studied in  the work \cite{Loketesis}, which is a continuation of \cite{Wallach}.

The symplectic triple systems appear as the $1$-homogeneous components of \emph{contact gradings}. A  grading on a semisimple Lie algebra 
$\mathfrak{g}=\sum_{i=-2}^2\g_i$ is said to be \emph{contact} if $\dim\g_{-2}=1$ and the bracket 
$[\,,\,]\colon\g_{-1}\times \g_{-1}\to \g_{-2}$ is non-degenerate, that is, $[x,\g_{-1}]\ne0$ if $0\ne x\in \g_{-1}$.   These gradings appear naturally in Differential Geometry because they are related to parabolic geometries which have an underlying contact structure (see \cite[\S3.2.4]{parabolic} for the complex case and \cite[\S3.2.10]{parabolic} for the real case). Contact gradings can only exist on simple Lie algebras, and on each complex simple Lie algebra of rank larger than one there is a unique (up to inner automorphism) contact grading.
 For a nice survey   on differential systems associated with simple graded Lie algebras including contact gradings, the reader may consult \cite{Yamaguchi}. 
 
 Note that this strong   relationship between symplectic triple systems and Differential Geometry does not mean that simple symplectic triple systems are 
 well-known. On the contrary, their simple enveloping Lie algebras are well-known, while concrete models including expressions for  the triple products are more necessary than ever, specially models suitable to be used in applications. 
In this work, we   exhibit models based exclusively on Linear Algebra that do not make use of Jordan algebras or structurable algebras, with a view towards its potential applications. 
 
A last word about applications to Projective Geometry. Given the contact grading of a complex simple Lie algebra $\g$, the associated \emph{Freudenthal variety} 
(in $\mathbb P(\g_1)$)      is just the projectivization of the set $ \{0\ne x\in\g_1:\ad(x)^2\g_{-2}=0\}$    with its reduced structure. A  systematic study of the projective geometry of Freudenthal varieties is developed in \cite{proyFr}, making use precisely of the triple product provided by the symplectic triple system.
Some applications to incidence geometries come from abelian inner ideals.
Note   that $\g_{-2}$ is an (obviously minimal, being one-dimensional) abelian inner ideal of $\g$, that is, $[\g_{-2},[\g_{-2},\g]]\subset \g_{-2}$ (any abelian inner ideal of a simple complex Lie algebra $\g$  is the corner of a $\ZZ$-grading). The \emph{inner ideal geometry} of a Lie algebra $\g$ is the point-line geometry with the minimal inner ideals as point set, as lines the inner line ideals, and inclusion as incidence. This inner ideal geometry is a generalization of an extremal geometry.  
In fact, inner ideals are used to construct  Moufang sets, Moufang triangles and Moufang hexagons  in \cite{Jeroen}, which uses the structurable algebra related to symplectic triple systems coming from a  Jordan division algebra.

 Coming back to the algebraic origin of these ternary structures as pieces involved in the construction of simple (5-graded) Lie algebras,  this role can be extended to arbitrary fields of characteristic different from 2 and 3, but in characteristic 3,  symplectic triple systems are used surprisingly to construct simple Lie superalgebras  (see \cite{Eld06} or the review \cite{alb_resumen}). Conversely, orthogonal triple systems, which provide   constructions of Lie superalgebras if the characteristic of the field is different from 2 and 3,
  are used in characteristic 3  to construct simple Lie  algebras. That is, symplectic triple systems 
  are a source of nice algebraic constructions. 
 
 Also the interplay among several families of ternary structures provides different landscapes (for instance, Freudenthal triple systems constitute the ternary structure used in \cite{CM}  to describe a tensor  measuring how far is the manifold from being of constant sectional curvature equal to $1$).  Some relations between ternary non-associative structures in general  (and symplectic triple systems in particular) and Physics are explained in \cite{kerner}.

 There is a strong connection between symplectic triple systems and structurable algebras of skew-dimension one. Structurable algebras form a class of non-associative algebras  with involution introduced by Bruce Allison in \cite{Allison78}, which always gives rise to a 5-graded Lie algebra. (They include, for instance, Jordan algebras with trivial involution, and associative algebras with involution.) The structurable algebras of skew-dimension one  have been constructed by means of non-linear isotopies of cubic norm structures   and also by means of hermitian cubic norm structures (see the review \cite{skew1_a}), but it would be convenient to have easier models in the real case based on Linear Algebra, obtained from our models of symplectic triple systems.

The classification of   symplectic triple systems for an arbitrary algebraically closed field of characteristic different from 2 and 3 is obtained in \cite[Theorem~2.30]{Eld06}. This is not the case for the real simple symplectic triple systems, despite  the large amount of information scattered among the  papers mentioned above on Differential Geometry, where the concrete expression for the ternary product is rarely made explicit.   

\medskip

The structure of the paper is as follows.  
Section 2   deals with the necessary background on symplectic triple systems, including the concepts of isomorphism and weak isomorphism, which in the real case are not equivalent in an obvious way. Each simple symplectic triple system has two related Lie algebras, namely, the   simple and $\ZZ/2$-graded standard enveloping algebra and the inner derivation algebra. 
Some results on $\ZZ/2$-graded Lie algebras will be stated  in Section 3 over fields of zero characteristic, which will be crucial for the classification. 
Section 4   gives precise models of all the split simple symplectic triple systems, valid over any field of zero characteristic, in particular over the real field. Precisely, special, symplectic and orthogonal models are recalled, while for the exceptional cases, new models are given using only Linear Algebra tools, that is, vector spaces, exterior powers,  symmetric or alternating bilinear forms and so on, instead of cubic norms or Jordan algebras, which appear usually in the exceptional descriptions.
In Section 5 specific non-split models for the real case will be described: unitarian and quaternionic symplectic triple systems among the symplectic triple systems with classical envelope, as well as several symplectic triple systems with exceptional envelope.  Again the provided exceptional examples    make use only of Linear Algebra tools.
The rest of the work will show that the list of examples is exhaustive. Section 6 contains the 
classification of the real simple symplectic triple 
systems up to weak isomorphism and 
Section 7 is devoted to proving that two weakly isomorphic simple real symplectic triple systems are necessarily isomorphic. It must be remarked here that over an algebraically closed field, two weakly isomorphic triple systems are isomorphic in an obvious way, but this is not the case for arbitrary fields (see Definition \ref{df_weakisom} and the paragraph preceeding it). The final result (Theorem \ref{th_main}) summarizes the classification of the simple real symplectic triple systems up to isomorphism.


\section{  Symplectic Triple Systems} 


 This material goes back essentially to \cite{ternarias}, although we follow the approach in \cite{Eld06} and \cite{hol}.
 
 Throughout this work $\FF$ will always denote a field of zero characteristic, although some results are valid in a more general setting. All vector spaces and algebras considered will be assumed to be finite-dimensional.

\begin{df}\label{df:STS}  
Let    $T$ be a vector space over a field $\FF$ endowed with 
a non-zero alternating bilinear form $(\cdot\vert\cdot)\colon T\times T\to\mathbb F$, 
  and a trilinear product $[\cdot,\cdot,\cdot]\colon T\times T\times T\to T$.
The triple $\bigl(T,[\cdot,\cdot,\cdot],(\cdot\vert\cdot )\bigr)$ is said to be a \emph{symplectic triple system} if the following conditions are satisfied 
\begin{gather}
[x,y,z]=[y,x,z],\label{eq_uno}\\
[x,y,z]-[x,z,y]=(x\vert z)y-(x\vert y)z+2(y\vert z)x,\label{eq_dos}\\
[x,y,[u,v,w]]=[[x,y,u],v,w]+[u,[x,y,v],w]+[u,v,[x,y,w]],\label{eq_tres}\\
([x,y,u]\vert v)=-(u\vert [x,y,v]),\label{eq_cuatro}
  \end{gather}
  for all $x,y,z,u,v,w\in T$. 
\end{df} 

   An \emph{ideal} of the symplectic triple system $T$ is a subspace $I$ of $T$ such that $[T,T,I]+[T,I,T]\subset I$; and the system is said to be  \emph{simple} if $[T,T,T]\ne0$ and it contains no proper ideal. The simplicity of $T$ is equivalent to the non-degeneracy of the bilinear form (\cite[Proposition~2.4]{Eld06}), and this implies that the simplicity is preserved under scalar extensions(!). 
   
Of course two symplectic triple systems $\bigl(T,[\cdot,\cdot,\cdot],(\cdot\vert\cdot )\bigr)$ and $\bigl(T',[\cdot,\cdot,\cdot]',(\cdot\vert\cdot )'\bigr)$ are said \emph{isomorphic} when there is a bijective map $f\colon T\to T'$ such that $(x\vert y)=(f(x)\vert f(y))'$ and $f([x,y,z])=[f(x),f(y),f(z)]'$ for any $x,y,z\in T$.
  \medskip
  
  There is a close relationship between symplectic triple systems and a particular kind of $\ZZ/2$-graded Lie algebras. 
  We denote by $d_{x,y}$ the linear map
  $$
  d_{x,y}:=[x,y,\cdot ]\in {\rm{End}}_{\FF}(T),
  $$
for $x,y\in T$. 
   Observe that the above set of identities can be read in the following way. 
  By \eqref{eq_cuatro}, $d_{x,y}$ belongs to the symplectic Lie algebra
 $$
  \spf(T,(\cdot,\cdot  ))=\{d\in\gl(T): (d(u)\vert v)+(u\vert d(v))=0\ \forall u,v\in T\},
  $$
 which is a subalgebra of the general linear algebra $\gl(T)=(\mathop{\rm{End}}_{\FF}(T),[\cdot,\cdot])$;
and by \eqref{eq_tres}, $d_{x,y}$ belongs to the   Lie algebra of derivations of the triple too, i.e.,
\begin{multline*}
   \der(T,[\cdot,\cdot,\cdot ])\bydef\{d\in\gl(T): \\
   d([u,v,w])=[d(u),v,w]+[u,d(v),w]+[u,v,d(w)]\ \forall u,v,w\in T\},
\end{multline*}
  which is also a Lie subalgebra of   $\gl(T)$.
  The set of \emph{inner derivations} is the linear span 
  $$
  \inder(T):=\left\{\sum_{i=1}^nd_{x_i,y_i}:x_i,y_i\in T,\,n\ge1\right\},
  $$
  which is a Lie subalgebra of $\der(T,[\cdot,\cdot,\cdot ])$, again taking into account \eqref{eq_tres}. Now, consider
  $(V,\langle\cdot\vert\cdot\rangle)$ a two-dimensional vector space endowed with a non-zero alternating bilinear form, and the vector space
  $$
  \g(T):=\spf(V,\langle\cdot\vert\cdot\rangle)\oplus \inder(T)\oplus\, (V\otimes T).
  $$ 
  Then $ \g(T)$ is endowed with a $\ZZ/2$-graded Lie algebra structure (\cite[Theorem~2.9]{Eld06}) such that the even part is 
  $\g(T)_{\bar0}:=\spf(V,\langle\cdot\vert\cdot\rangle)\oplus \inder(T)$, a direct sum of two ideals, and the odd part is $\g(T)_{\bar1}:=  V\otimes T$. The anticommutative product is given by 
  \begin{itemize}
  \item the usual bracket on $\g(T)_{\bar0}$, which in this way is a Lie subalgebra of $ \g(T)$;
  \item the natural action of $\g(T)_{\bar0}$ on $\g(T)_{\bar1}$, that is,
  $$
   [\gamma+d,a\otimes x]=\gamma(a)\otimes x+a\otimes d(x),
  $$ 
  for $\gamma \in\spf(V,\langle\cdot\vert\cdot\rangle)$, $d\in\inder(T)$, $a\in V$, $x\in T$; and
 \item the product of two odd elements as follows
  \begin{equation}\label{eq_sympproducto}
  [a\otimes x,b\otimes y]=(x\vert y)\gamma_{a,b}+\langle a\vert b\rangle d_{x,y}\in \g(T)_{\bar0},
\end{equation}
  for $a,b\in V$ and $x,y\in T$, where $\gamma_{a,b}\in\spf(V,\langle\cdot\vert\cdot\rangle)$ is defined by
\begin{equation}\label{de_gamma}
  \gamma_{a,b}:=\langle a\vert\cdot\rangle b+\langle b\vert\cdot\rangle a.
  \end{equation}
  \end{itemize}

  The Lie algebra $ \g(T)$ is called the \emph{standard enveloping algebra} related to the symplectic triple system $T$. Moreover,
  the $\ZZ/2$-graded Lie algebra $ \g(T)$ is  simple if and only if so is the symplectic triple system $\bigl(T,[\cdot,\cdot,\cdot],(\cdot\vert \cdot)\bigr)$.
Actually, the standard enveloping algebras of symplectic triple systems are 
  $\ZZ/2$-graded Lie algebras of a very specific type. 
  
We will need the next well known result.

\begin{lemma}\cite[Lemma~1.4]{Vinberg}\label{le_Vinberg}
For any $i=1,2,3$, take $X_i$ a trivial $L$-module and  $U_i$ an irreducible $L$-module such that $\dim\Hom_L(U_1\otimes U_2,U_3)=1$. Fix $p\colon U_1\times U_2\to U_3$ a non-zero $L$-invariant map. Then, for any $\varphi\colon  (U_1\otimes X_1)\times (U_2\otimes X_2)\to (U_3\otimes X_3)$ an $L$-invariant map, there exists a bilinear map $\eta\colon X_1\times X_2\to X_3$ such that $\varphi(u_1\otimes x_1,u_2\otimes x_2)=p(u_1,u_2)\eta(x_1,x_2)$.
\end{lemma}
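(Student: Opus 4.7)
The plan is to convert the $L$-invariant bilinear map $\varphi$ into an $L$-module homomorphism on a tensor product, and then exploit the triviality of the $X_i$ to decouple the $U$-factors from the $X$-factors, after which the one-dimensionality hypothesis on $\Hom_L(U_1\otimes U_2,U_3)$ does all the work.

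First I would regard $\varphi$ as an $L$-module homomorphism
\[
\widetilde\varphi\colon (U_1\otimes U_2)\otimes (X_1\otimes X_2)\longrightarrow U_3\otimes X_3,
\]
using the canonical rearrangement $(U_1\otimes X_1)\otimes(U_2\otimes X_2)\simeq (U_1\otimes U_2)\otimes(X_1\otimes X_2)$ together with the fact that the $L$-action on each $X_i$, and hence on $X_1\otimes X_2$ and $X_3$, is trivial.

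Next, fixing bases $\{e_k\}$ of $X_1\otimes X_2$ and $\{f_\ell\}$ of $X_3$, each slice $w\mapsto \widetilde\varphi(w\otimes e_k)$ is an $L$-module homomorphism $U_1\otimes U_2\to U_3\otimes X_3=\bigoplus_\ell (U_3\otimes f_\ell)$, and projecting onto the $\ell$-th summand yields $L$-homomorphisms $p_{k\ell}\colon U_1\otimes U_2\to U_3$. By the hypothesis $\dim\Hom_L(U_1\otimes U_2,U_3)=1$ these must be scalar multiples $p_{k\ell}=c_{k\ell}\,p$ with $c_{k\ell}\in\FF$. I would then define $\eta\colon X_1\otimes X_2\to X_3$ by $\eta(e_k):=\sum_\ell c_{k\ell}f_\ell$ and extend linearly; viewed as a bilinear map $X_1\times X_2\to X_3$, this $\eta$ satisfies $\widetilde\varphi(w\otimes v)=p(w)\otimes\eta(v)$ by construction, which translates back to the required identity $\varphi(u_1\otimes x_1,u_2\otimes x_2)=p(u_1,u_2)\,\eta(x_1,x_2)$.

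The argument is essentially formal, and the main (minor) technical point is justifying the natural isomorphism
\[
\Hom_L\bigl((U_1\otimes U_2)\otimes Y,\, U_3\otimes Z\bigr)\cong \Hom_L(U_1\otimes U_2,U_3)\otimes \Hom_{\FF}(Y,Z)
\]
for trivial $L$-modules $Y,Z$, which is exactly what the basis-chasing above amounts to. Once this decoupling is in place, the one-dimensionality of $\Hom_L(U_1\otimes U_2,U_3)$ forces the splitting $\widetilde\varphi=p\otimes\eta$ for a unique $\eta\in\Hom_{\FF}(X_1\otimes X_2,X_3)$, and there is nothing more to prove.
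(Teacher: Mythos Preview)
The paper does not actually prove this lemma: it is quoted as \cite[Lemma~1.4]{Vinberg} and used without proof. Your argument is correct and is essentially the standard one --- rearranging tensor factors to get an $L$-homomorphism $(U_1\otimes U_2)\otimes(X_1\otimes X_2)\to U_3\otimes X_3$, then using triviality of the $X_i$ to identify $\Hom_L\bigl((U_1\otimes U_2)\otimes Y,\,U_3\otimes Z\bigr)\cong\Hom_L(U_1\otimes U_2,U_3)\otimes\Hom_\FF(Y,Z)$ and invoking the one-dimensionality hypothesis --- so there is nothing to compare against and nothing to fix.
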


  In consequence,  if $\g=\g_{\bar0}\oplus\g_{\bar1}$ is a $\ZZ/2$-graded Lie algebra such that there are
   a two-dimensional vector space endowed with a non-zero alternating bilinear form
   $(V,\langle\cdot\vert\cdot\rangle)$, an ideal $\mathfrak{s}$ of $\g_{\bar0}$, and  an $\mathfrak{s}$-module $T$, such that
\begin{equation}\label{eq_tipoenvolvente}
 \g_{\bar0}=\spf(V,\langle\cdot\vert\cdot\rangle)\oplus\mathfrak s,\qquad \g_{\bar1}=V\otimes T
\end{equation}
 and the action of $\g_{\bar0}$ on $\g_{\bar1}$ is the natural one, then the invariance of the Lie bracket in $\g$ under the $\spf(V,\langle\cdot\vert\cdot\rangle)$-action provides the existence of
  \begin{itemize}
  \item an alternating bilinear form 
  $(\cdot\vert \cdot)\colon T\times T\to\mathbb F$, and
  \item a symmetric bilinear map $d\colon T\times T \to \mathfrak{s}$, 
  \end{itemize}
  such that \eqref{eq_sympproducto} holds for any $x,y\in T$, $a,b\in V$. Under these assumptions,  (see, for instance, the arguments in \cite[Theorem~4.4]{alb_sts} interpreting \cite{ternarias}), if $(\cdot\vert \cdot)\ne0$, then
  $\bigl(T,[\cdot,\cdot,\cdot],(\cdot\vert \cdot )\bigr)$ is a symplectic triple system for the triple product on $T$ defined by $[x,y,z]:=d_{x,y}(z)\equiv d({x,y}). z\in\mathfrak{s}. T\subset T$. If 
 $\g$ is simple, then $T$ is a simple symplectic triple system with 
 $$
 \g\cong\g(T),\quad \mathfrak{s}\cong\inder(T). 
 $$ 
 
From here it is not clear whether the pair $(\g(T),\inder(T))$ determines $T$ in some way.

If $\bigl(T,[\cdot,\cdot,\cdot],(\cdot\vert\cdot )\bigr)$ is a  symplectic triple system and $0\ne\alpha\in\mathbb F$, then we can consider another symplectic triple system 
 $T^{[\alpha]}:=\bigl(T,[\cdot,\cdot,\cdot]^\alpha,(\cdot\vert\cdot )^\alpha\bigr)$ by means of
\begin{equation}\label{eq_shift}
 (x\vert y)^\alpha=\alpha(x\vert y);\qquad [x,y,z]^\alpha=\alpha[x,y,z];
 \end{equation}
 for any $x,y,z\in T$, which will be called the \emph{$\alpha$-shift} of $T$.
  Observe that   $\inder(T^{[\alpha]})=\inder(T)$ and $\g(T^{[\alpha]})\cong\g(T)$ because the inner derivations of  $T^{[\alpha]}$ are trivially the maps
 $d_{x,y}^\alpha:=[x,y,-]^\alpha=\alpha d_{x,y}$.


\bigskip

\section{$\ZZ/2$-graded simple Lie algebras}

The standard enveloping algebra of a symplectic triple system $T$ belongs to a
very specific class of $\ZZ/2$-graded Lie algebras (see
\eqref{eq_tipoenvolvente}). In this section, properties of $\ZZ/2$-graded Lie algebras will be exploited to ensure the existence, under certain conditions, of symplectic triple systems
over non algebraically closed fields.  (See Corollary \ref{co:Z2_STS}.)

Our first result is well-known.

\begin{lemma}\label{le:Z2}
Let $\cL=\cL\subo\oplus\cL\subuno$ be a $\ZZ/2$-graded finite-dimensional simple Lie algebra over an algebraically closed field $\FF$ of characteristic $0$, with 
$\cL\subuno\neq 0$.
\begin{enumerate}
\item 
$\cL\subo$ is reductive and it acts faithfully and completely reducibly on 
$\cL\subuno$, which is a sum of at most two irreducible
$\cL\subo$-modules.
\item
The center of $\cL\subo$: $Z(\cL\subo)$, is non-zero if and only if 
$\cL\subuno$ is not irreducible as a module for $\cL\subo$. In this case 
$\cL\subuno=\cV\oplus\cW$ for irreducible $\cL\subo$-modules $\cV$ 
and $\cW$ and the direct sum $\cW\oplus\cL\subo\oplus\cV$ is a 
$\ZZ$-grading of $\cL$, with $\cL_{-1}=\cW$, $\cL_0=\cL\subo$, and 
$\cL_1=\cV$. Moreover, $Z(\cL\subo)=\FF z$ with 
\[
\ad_z\vert_\cV=\id,\qquad \ad_z\vert_\cW=-\id.
\]
\item
The space of symmetric, $\cL\subo$-invariant, bilinear forms 
$\cL\subuno\times\cL\subuno\rightarrow \FF$ has dimension one.
\end{enumerate}
\end{lemma}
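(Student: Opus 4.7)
The plan is to reduce the three claims to standard facts about involutions of semisimple Lie algebras, Schur's lemma, and the Killing form. For (1), I would first note that the $\ZZ/2$-grading corresponds to an order-two automorphism $\theta$ with eigenvalues $+1$ on $\cL\subo$ and $-1$ on $\cL\subuno$. Since $\cL$ is simple and $\chr\FF=0$, its Killing form $\kappa$ is non-degenerate and $\theta$-invariant, which forces $\kappa(\cL\subo,\cL\subuno)=0$, so that $\kappa$ restricts non-degenerately to both $\cL\subo$ and $\cL\subuno$; Cartan's criterion then yields reductivity of $\cL\subo$. For faithfulness, the kernel $\{x\in\cL\subo:[x,\cL\subuno]=0\}$ is simultaneously an ideal of $\cL\subo$ and centralizes $\cL\subuno$, so it is a proper ideal of the simple algebra $\cL$ and must vanish. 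Complete reducibility follows from Weyl's theorem applied to the semisimple factor $[\cL\subo,\cL\subo]$, combined with the fact that $Z(\cL\subo)$ lies in a Cartan subalgebra of $\cL$ and therefore acts semisimply on $\cL\subuno$. For the bound of at most two irreducible constituents, I would exploit the Killing form pairing (which induces an $\cL\subo$-module isomorphism $\cL\subuno\cong\cL\subuno^*$) together with the requirement that $\cL$ be simple, analysing the eigenspace decomposition of a generic element of $Z(\cL\subo)$ to rule out three or more summands.

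For (2), Schur's lemma is the driving tool. If $\cL\subuno$ is irreducible, every $z\in Z(\cL\subo)$ acts as a scalar $\lambda$ on $\cL\subuno$ and as zero on $\cL\subo$; but $\trace(\ad_z)=0$ in any semisimple Lie algebra, so $\lambda\dim\cL\subuno=0$, forcing $\lambda=0$ and hence $\ad_z=0$, which gives $z=0$ because $\cL$ has trivial center. Conversely, assuming the decomposition $\cL\subuno=\cV\oplus\cW$ into irreducibles obtained in (1), I would consider the $\cL\subo$-linear endomorphism of $\cL\subuno$ that is $+1$ on $\cV$ and $-1$ on $\cW$, extended by zero on $\cL\subo$; this is a derivation of $\cL$ commuting with $\cL\subo$, so as all derivations of $\cL$ are inner, it equals $\ad_z$ for some $z\in Z(\cL\subo)$. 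The $\ZZ$-grading then follows from the eigenspace decomposition of $\ad_z$: since $[\cV,\cV]$ would sit in $\cL\subo$ (eigenvalue $0$) and in the $2$-eigenspace of $\ad_z$, it must vanish; similarly $[\cW,\cW]=0$, and $[\cV,\cW]\subset\cL\subo$, exhibiting $\cW\oplus\cL\subo\oplus\cV$ as a $\ZZ$-grading.

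For (3), symmetric $\cL\subo$-invariant bilinear forms on $\cL\subuno$ correspond to symmetric elements of $\Hom_{\cL\subo}(\cL\subuno,\cL\subuno^*)$. In the irreducible case, Schur's lemma bounds this $\Hom$ space by one, while $\kappa\vert_{\cL\subuno}$ provides a nonzero symmetric invariant form, so the space is exactly one-dimensional. In the reducible case $\cL\subuno=\cV\oplus\cW$, using $[\cV,\cV]=[\cW,\cW]=0$ one computes directly that $\kappa$ vanishes on each summand separately and hence pairs $\cV$ and $\cW$ non-degenerately, forcing $\cV\cong\cW^*$; thus $\Hom_{\cL\subo}(\cL\subuno,\cL\subuno^*)\cong\FF^2$ (one copy from $\Hom(\cV,\cW^*)$ and one from $\Hom(\cW,\cV^*)$), and imposing the symmetry of the associated bilinear form identifies these two copies, yielding again a one-dimensional space.

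The main obstacle I anticipate is the bound of at most two irreducible summands in (1): unlike reductivity and faithfulness, which follow from classical tools, this bound is not a consequence of a single principle but requires combining the simplicity of $\cL$ with the Killing-form duality on $\cL\subuno$ and with the constraints that the $\ZZ/2$-grading imposes on the possible eigenvalue patterns of central elements of $\cL\subo$.
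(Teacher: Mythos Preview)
Your overall structure matches the paper's, and most pieces---reductivity via the Killing form, faithfulness, complete reducibility, and the Schur-type arguments in (3)---are correct and essentially the same. Your trace argument in (2) for the irreducible case ($\trace(\ad_z)=0$) is a clean alternative to the paper's computation via $\cL\subo=[\cL\subuno,\cL\subuno]$.

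There is, however, a genuine circularity between your treatment of the bound in (1) and the reducible case in (2). To verify that your map $d$ (equal to $+1$ on $\cV$, $-1$ on $\cW$, zero on $\cL\subo$) is actually a derivation, you must check $d([a,b])=[d(a),b]+[a,d(b)]$ for $a,b\in\cV$; since $[a,b]\in\cL\subo$ this reads $0=2[a,b]$, i.e., it already requires $[\cV,\cV]=0$. But you only derive $[\cV,\cV]=0$ \emph{after} obtaining $z$, by arguing that $[\cV,\cV]$ lies in the $2$-eigenspace of $\ad_z$. Similarly, your proposed route to the bound in (1) via ``the eigenspace decomposition of a generic element of $Z(\cL\subo)$'' presupposes $Z(\cL\subo)\neq 0$ in the reducible situation---precisely the content of (2).

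The paper breaks this circle with a direct ideal argument carried out entirely inside (1), with no reference to the center: if $\cV\subset\cL\subuno$ is any $\cL\subo$-submodule with $[\cV,\cV]\neq 0$, one checks (using only complete reducibility and the Jacobi identity) that $[\cV,\cV]\oplus[\cV,[\cV,\cV]]$ is an ideal of $\cL$, hence equals $\cL$, forcing $\cL\subuno=\cV$. This single observation delivers both missing ingredients. First, it gives the bound: if $\cL\subuno=\cV_1\oplus\cV_2\oplus\cV_3$, applying it to each $\cV_i\oplus\cV_j$ yields $[\cV_i\oplus\cV_j,\cV_i\oplus\cV_j]=0$, whence $[\cL\subuno,\cL\subuno]=0$, a contradiction. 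Second, in the two-summand case it gives $[\cV,\cV]=0=[\cW,\cW]$ outright, so that the derivation $d$ in (2) is well defined and your (and the paper's) inner-derivation argument then goes through.
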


We include a proof, due to the lack of a suitable reference. 

Note that the uniqueness, up to scalars, of invariant metrics, is important quite often in arguments in Differential Geometry.

\begin{proof}
The adjoint representation restricts to a faithful representation 
$\rho\colon \cL\subo\rightarrow \frgl(\cL)$. The subspaces $\cL\subo$ and 
$\cL\subuno$ are
orthogonal relative to the Killing form $\kappa$ of $\cL$. In particular,
$\kappa\subo=\kappa\vert_{\cL\subo}$ is non-degenerate, and this is
the trace form $(x,y)\mapsto \trace\rho(x)\rho(y)$. By 
\cite[p.~99]{EldLA}, we conclude that 
$\cL\subo=Z(\cL\subo)\oplus [\cL\subo,\cL\subo]$, with $Z(\cL\subo)$ 
toral, $ [\cL\subo,\cL\subo]$ semisimple, and that  $\cL\subuno$ is a
 completely reducible $\cL\subo$-module.

If $\cV$ is an $\cL\subo$-submodule of $\cL\subuno$ and $[\cV,\cV]\neq 0$, then $[\cV,\cV]\oplus [\cV,[\cV,\cV]]$ is an ideal of $\cL$:
\begin{itemize}
\item 
It is $\cL\subo$-invariant.
\item 
$[\cL\subuno,[\cV,[\cV,\cV]]]\subseteq
[\cL\subo,[\cV,\cV]]+[\cV,[\cL\subo,\cV]]\subseteq [\cV,\cV]$.
\item 
By complete reducibility, $\cL\subuno=\cV\oplus\cW$ for some 
$\cL\subo$-submodule $\cW$, and hence  
$[\cL\subuno,[\cV,\cV]]=[\cV,[\cV,\cV]]+[\cW,[\cV,\cV]]$. As
\[
[\cW,[\cV,\cV]]\subseteq
\begin{cases}
[\cW,\cL\subo]=\cW,\\
[[\cW,\cV],\cV]\subseteq [\cL\subo,\cV]\subseteq\cV,
\end{cases}
\]
then $[\cW,[\cV,\cV]]=0$, so we get
$[\cL\subuno,[\cV,\cV]]=[\cV,[\cV,\cV]]$.
\end{itemize}
As $\cL$ is simple, it follows that $\cL=[\cV,\cV]\oplus [\cV,[\cV,\cV]]$, and in particular $\cL\subo=[\cV,\cV]$ and $\cL\subuno=\cV$. 

Now, if $\cL\subuno=\cV_1\oplus\cV_2\oplus\cV_3$ for non-zero 
$\cL\subo$-modules $\cV_1$, $\cV_2$, $\cV_3$, then the above shows 
that $[\cV_i\oplus\cV_j,\cV_i\oplus\cV_j]=0$ for any $i\neq j$, but this
gives $[\cL\subuno,\cL\subuno]=0$, a contradiction. This proves the first assertion.

If $\cL\subuno$ is an irreducible $\cL\subo$-module, then Schur's Lemma 
shows that any $0\neq z\in Z(\cL\subo)$ satisfies 
$\ad_z\vert_{\cL\subuno}=\alpha\id$ for some $0\neq\alpha\in\FF$. Hence 
we get $0=\ad_z\vert_{\cL\subo}
=\ad_z\vert_{[\cL\subuno,\cL\subuno]}=2\alpha\id$, a contradiction. Hence 
the center of $\cL\subo$ is trivial if $\cL\subuno$ is irreducible.

Otherwise, $\cL\subuno=\cV\oplus\cW$ for irreducible 
$\cL\subo$-modules $\cV$ and $\cW$. By the arguments above, 
$[\cV,\cV]=0=[\cW,\cW]$, so that 
$\cL\subo=[\cL\subuno,\cL\subuno]=[\cV,\cW]$, and the decomposition 
$\cV\oplus\cL\subo\oplus\cW$ is a $\ZZ$-grading of $\cL$ such that $\cV$, $\cL\subo$ and $\cW$
are the homogeneous components of degrees $1$, $0$ and $-1$ respectively. Thus the 
endomorphism $d$ that acts trivially on $\cL\subo$, as the identity on 
$\cV$, and as minus the identity on $\cW$, is a derivation of $\cL$. But 
$\cL $ is simple, so $d$ is inner, and hence there exists an element 
$z\in\cL$ with $d=\ad_z$. It follows that $z\in Z(\cL\subo)$. Schur's 
Lemma shows that $Z(\cL\subo)=\FF z$. This proves part (2).

As $\kappa({\cL\subo}\,,\, \cL\subuno)=0$,  the restriction 
$\kappa\vert_{\cL\subuno}\colon \cL\subuno\times
\cL\subuno\rightarrow \FF$ is a   non-degenerate symmetric $\cL\subo$-invariant
bilinear form   and, in particular, $\cL\subuno$ is a  self-dual $\cL\subo$-module.
 If $\cL\subuno$ is irreducible, then 
$\Hom_{\cL\subo}(\cL\subuno\otimes_\FF \cL\subuno,\FF)\simeq
\Hom_{\cL\subo}(\cL\subuno,\cL\subuno)$ has dimension one by 
Schur's Lemma, so that 
any $\cL\subo$-invariant bilinear form 
$\cL\subuno\times\cL\subuno\rightarrow \FF$ is scalar multiple of 
 $\kappa\vert_{\cL\subuno}$.

Otherwise, $\cL\subuno=\cV\oplus\cW$ as above and 
$\Hom_{\cL\subo}(\cV\otimes_\FF\cV,\FF)=0
=\Hom_{\cL\subo}(\cW\otimes_\FF\cW,\FF)$, because of the action of 
the element $z\in Z(\cL\subo)$, which acts trivially on $\FF$. This shows that the 
$\cL\subo$-modules $\cV$ and $\cW$ are not  self-dual, and they are dual 
of each other. Also we have that
$\Hom_{\cL\subo}(\cL\subuno\otimes_\FF\cL\subuno,\FF)=
\Hom_{\cL\subo}\bigl((\cV\oplus\cW)\otimes_\FF(\cV\oplus\cW),\FF\bigr)
{\az \simeq}\Hom_{\cL\subo}(\cV\otimes_\FF \cW,\FF)\oplus
\Hom_{\cL\subo}(\cW\otimes_\FF\cV,\FF)$ has dimension two, and again the 
space of symmetric, $\cL\subo$-invariant, bilinear forms 
$\cL\subuno\times\cL\subuno\rightarrow \FF$ has dimension one.
\end{proof}

\begin{corollary}\label{co:Z2}
Let $\cL$ be a finite-dimensional Lie algebra over a field $\FF$ of characteristic $0$, let 
$\rho\colon \cL\rightarrow\frgl(\cV)$ be a finite-dimensional, faithful, and 
completely reducible, representation of $\cL$. Then, up to scalars, there 
is at most one $\cL$-invariant bilinear map 
$\mu\colon \cV\times\cV\rightarrow\cL$ such that $\cL\oplus\cV$, with 
multiplication given by:
\begin{equation}\label{eq:LV}
[x+v,y+w]=\bigl([x,y]+\mu(v,w)\bigr)+\bigl(\rho_x(w)-\rho_y(v)\bigr)
\end{equation}
for $x,y\in\cL$ and $v,w\in\cV$, is a central simple Lie algebra.
\end{corollary}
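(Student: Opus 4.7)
The plan is to reduce to Lemma~\ref{le:Z2}(3) by extending scalars to $\Falg$ and then to compare the two candidate structures through their Killing forms. Suppose $\mu_1$ and $\mu_2$ are two $\cL$-invariant bilinear maps of the required kind, giving rise to central simple Lie algebras $\cL_i=\cL\oplus\cV$ with brackets $[\cdot,\cdot]_i$; the skew-symmetry in~\eqref{eq:LV} forces each $\mu_i$ to be alternating. Since each $\cL_i$ is central simple, its scalar extension remains a simple $\ZZ/2$-graded Lie algebra over $\Falg$, with even part $\cL\otimes_\FF\Falg$ and odd part $\cV\otimes_\FF\Falg$, so Lemma~\ref{le:Z2}(3) will apply.

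Next I would exploit the Killing form $\kappa_i$ of $\cL_i$. Because $\ad_x$ preserves the $\ZZ/2$-grading for $x\in\cL$ while $\ad_v$ reverses it for $v\in\cV$, a trace-of-a-graded-endomorphism argument gives $\kappa_i(\cL,\cV)=0$; combined with the non-degeneracy of $\kappa_i$ (Cartan's criterion, since $\cL_i$ is simple in characteristic zero), both restrictions $\Phi_i\bydef\kappa_i|_{\cL\times\cL}$ and $\Psi_i\bydef\kappa_i|_{\cV\times\cV}$ are non-degenerate. A direct trace computation yields
$$\Phi_i(x,y)=\trace_\cL(\ad_x\ad_y)+\trace_\cV(\rho_x\rho_y),$$
which is independent of $\mu_i$, so $\Phi_1=\Phi_2=:\Phi$. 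Invariance of the Killing form combined with~\eqref{eq:LV} then gives, for $v,w\in\cV$ and $x\in\cL$,
$$\Phi(\mu_i(v,w),x)=\kappa_i([v,w]_i,x)=\kappa_i(v,[w,x]_i)=-\Psi_i(v,\rho_x w),$$
so each $\mu_i$ is completely determined by the corresponding $\Psi_i$ through the non-degenerate pairing $\Phi$.

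Now Lemma~\ref{le:Z2}(3) applied over $\Falg$ asserts that the $\Falg$-space of symmetric $\cL$-invariant bilinear forms on $\cV\otimes_\FF\Falg$ is one-dimensional, so the $\Falg$-linear extensions of $\Psi_1$ and $\Psi_2$ satisfy $\Psi_2=\lambda\Psi_1$ for some $\lambda\in\Falg$. Substituting this into the displayed identity and using the non-degeneracy of $\Phi$ on $\cL\otimes_\FF\Falg$ yields $\mu_2=\lambda\mu_1$ over $\Falg$. Evaluating at any pair $(v_0,w_0)\in\cV\times\cV$ with $\mu_1(v_0,w_0)\neq 0$---such a pair exists because $\mu_1=0$ would make $\cV$ a non-trivial abelian ideal of $\cL_1$, contradicting simplicity when $\cV\neq 0$---shows that $\lambda$ is the ratio of two coordinates in $\FF$, hence $\lambda\in\FF$ and therefore $\mu_2=\lambda\mu_1$ already over $\FF$.

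The crucial step is the equality $\Phi_1=\Phi_2$: it collapses the scaling ambiguities for $\mu_i$ and $\Psi_i$ into a single one and is what allows the one-dimensionality from Lemma~\ref{le:Z2}(3) to transfer directly to $\mu$. The trace computation itself is elementary, but without this observation one would be left with two independent scaling constants and the argument would not close.
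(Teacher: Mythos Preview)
Your proof is correct and follows essentially the same route as the paper's: both arguments hinge on the observation that the restriction of the Killing form to $\cL$, namely $\Phi(x,y)=\trace_\cL(\ad_x\ad_y)+\trace_\cV(\rho_x\rho_y)$, depends only on $\cL$ and $\rho$ and not on $\mu$, and then use the invariance identity $\Phi\bigl(x,\mu(v,w)\bigr)=\kappa\bigl(\rho_x(v),w\bigr)$ together with Lemma~\ref{le:Z2}(3) to pin $\mu$ down up to a scalar. The paper is slightly more economical in that it fixes one reference form $\bup$ on $\cV$ and notes directly that the one-dimensionality of the space of symmetric $\cL$-invariant forms on $\cV$ passes from $\Falg$ to $\FF$ (the dimension of this $\Hom$-space is preserved under scalar extension), whereas you work with two maps $\mu_1,\mu_2$ and add a short rationality step to descend the scalar $\lambda$ from $\Falg$ to $\FF$; but the substance is the same.
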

\begin{proof}
If $\mu$ is such a map, $\frg=\cL\oplus\cV$ is a form of a simple 
$\ZZ/2$-graded  Lie algebra $\frg_{\overline{\FF}}=\frg\otimes_\FF {\overline{\FF}}$, where 
$\overline{\FF}$ is an algebraic closure of $\FF$. Let $\kappa$ be the 
Killing form of $\frg$. For any $x,y\in\cL$, 
$\kappa(x,y)=\trace(\ad_x\ad_y)+\trace(\rho_x\rho_y)$  
depends only on $\cL$ and $\rho$, so 
it gives a fixed non-degenerate invariant symmetric bilinear form on 
$\cL$.

Note also that the restriction of $\kappa$ to $\cV$ is a symmetric, 
non-degenerate, $\cL$-invariant, bilinear form on $\cV$. By   Lemma~\ref{le:Z2}, there is, up to scalars, a unique non-zero symmetric,
$\cL$-invariant, bilinear form $\bup\colon\cV\times\cV\rightarrow \FF$, and hence there is a non-zero $\alpha\in\FF$ such that 
$\kappa\vert_\cV=\alpha \bup$.

But for any $v,w\in\cV$ and $x\in\cL$, we have
\begin{equation}\label{eq:kappa_b}
\kappa\vert_\cL\bigl(x,\mu(v,w)\bigr)=\kappa\vert_\cV(\rho_x(v),w\bigr)
=\alpha \bup\bigl(\rho_x(v),w\bigr).
\end{equation}
The non-degeneracy of $\kappa\vert_\cL$ shows that $\mu$ is unique up to scalars.
\end{proof}

\begin{remark}\label{re:Z2}
In the conditions of Corollary \ref{co:Z2}, if there is an 
$\cL\otimes_\FF {\overline{\FF}}$-invariant bilinear map 
$\nu\colon\bigl(\cV\otimes_\FF {\overline{\FF}}\bigr)\times
\bigl(\cV\otimes_\FF {\overline{\FF}}\bigr)
\rightarrow\cL\otimes_\FF {\overline{\FF}}$ making 
$(\cL\oplus\cV)\otimes_\FF {\overline{\FF}}$ a simple Lie algebra with the bracket as 
in \eqref{eq:LV}, then the same is valid for $\cL$ and $\cV$.
\end{remark}
\begin{proof}
The existence of such a bilinear map 
$ \nu$ forces that the space of symmetric $\cL$-invariant bilinear forms on $\cV$ is one-dimensional   by Lemma~\ref{le:Z2}, since this dimension remains invariant under scalar extension.
Hence 
there is a bilinear $\cL$-invariant map $\mu\colon\cV\times\cV\rightarrow \cL$ such that its complexification is a scalar multiple of $\nu$. 
The Jacobi identity for the bracket in $(\cL\oplus\cV)\otimes_\FF {\overline{\FF}}$ given by \eqref{eq:LV} for the map $\nu$ is also true for any scalar multiple of $\nu$.
Therefore, the Jacobi identity for the bracket in 
 $\cL\oplus\cV$ given by \eqref{eq:LV} follows from the Jacobi identity in $(\cL\oplus\cV)\otimes_\FF {\overline{\FF}}$ using the complexification of $\mu$.
\end{proof}

\begin{corollary}\label{co:Z2_STS}
Let $T $ be a simple symplectic triple system, over an algebraic closure 
${\overline{\FF}}$ of the field $\FF$ of characteristic $0$, 
 with inner derivation algebra $\inder(T )$. Let $\frs$ be a    form of $\inder(T )$ and let $S$ be a module for 
$\frs$ such 
that $S_{\overline{\FF}}\bydef S\otimes_\FF {\overline{\FF}}$ is isomorphic to $T $ as a 
module for $\frs\otimes_\FF{\overline{\FF}}\simeq \inder(T )$. Then, up to 
scalars, there is a unique non-degenerate alternating bilinear form 
$S\times S\rightarrow \FF$: $(x,y)\mapsto (x\vert y)$, and a unique 
triple product $S\times S\times S\rightarrow S$: 
$(x,y,z)\mapsto [x,y,z]$, that makes $S$ a symplectic triple system, with inner derivation algebra equal to the image in $\frgl(S)$ of 
$\frs$.
\end{corollary}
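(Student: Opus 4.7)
The plan is to transport the entire standard enveloping construction from the algebraically closed ground field down to $\FF$, and then extract the triple system from a simple $\ZZ/2$-graded Lie algebra of the shape \eqref{eq_tipoenvolvente}. Concretely, fix a two-dimensional $\FF$-vector space $V$ with a non-zero alternating form $\langle\cdot\vert\cdot\rangle$, set
\[
\cL:=\spf(V,\langle\cdot\vert\cdot\rangle)\oplus\frs,\qquad \cV:=V\otimes_\FF S,
\]
where $\cL$ acts on $\cV$ in the natural way (the first summand on the $V$-factor and $\frs$ on $S$). The goal is to produce an $\cL$-invariant bilinear map $\mu\colon\cV\times\cV\to\cL$ turning $\cL\oplus\cV$ into a simple Lie algebra as in \eqref{eq:LV}; then the resulting $\ZZ/2$-graded simple Lie algebra is automatically of the form \eqref{eq_tipoenvolvente} and the discussion after that equation produces the desired symplectic triple system structure on $S$.

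The key step is to verify the hypotheses of Corollary \ref{co:Z2} and Remark \ref{re:Z2} for the representation $\rho\colon\cL\to\frgl(\cV)$. Faithfulness is clear: $\spf(V,\langle\cdot\vert\cdot\rangle)$ already acts faithfully on $V$, and $\frs$ embeds in $\frgl(S)$ because its scalar extension $\frs\otimes_\FF\overline{\FF}\simeq\inder(T)$ sits faithfully inside $\frgl(T)$ by definition. For complete reducibility, note that $\cV\otimes_\FF\overline{\FF}\simeq V_{\overline{\FF}}\otimes T$ is a completely reducible module for $\cL\otimes_\FF\overline{\FF}\simeq \spf(V_{\overline{\FF}})\oplus\inder(T)$ (the even part of the simple $\ZZ/2$-graded Lie algebra $\g(T)$, which is completely reducible by Lemma~\ref{le:Z2}(i)), and complete reducibility descends from $\overline{\FF}$ to $\FF$. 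Moreover, the complexification $(\cL\oplus\cV)\otimes_\FF\overline{\FF}$ is canonically isomorphic to $\g(T)$, which is simple, so the bracket $\nu$ on $\g(T)$ furnishes the $(\cL\otimes_\FF\overline{\FF})$-invariant bilinear map required by Remark \ref{re:Z2}. Hence there exists an $\cL$-invariant $\mu$ such that $\cL\oplus\cV$ with bracket \eqref{eq:LV} is simple, and by Corollary \ref{co:Z2} this $\mu$ is unique up to a non-zero scalar in $\FF$.

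Once $\mu$ is in hand, the $\spf(V,\langle\cdot\vert\cdot\rangle)$-invariance and Lemma \ref{le_Vinberg} (applied to the decomposition of $\Hom$ spaces into $\spf(V)$-isotypic components) force
\[
\mu(a\otimes x,b\otimes y)=(x\vert y)\gamma_{a,b}+\langle a\vert b\rangle\, d(x,y)
\]
for unique bilinear maps $(\cdot\vert\cdot)\colon S\times S\to\FF$ (alternating, by the skew-symmetry of $\mu$) and $d\colon S\times S\to\frs$ (symmetric), with $\gamma_{a,b}$ as in \eqref{de_gamma}. Setting $[x,y,z]:=d(x,y).z$, the general principle recalled after \eqref{eq_tipoenvolvente} then guarantees, provided $(\cdot\vert\cdot)\neq 0$, that $\bigl(S,[\cdot,\cdot,\cdot],(\cdot\vert\cdot)\bigr)$ is a symplectic triple system whose standard enveloping algebra is $\cL\oplus\cV$ and whose inner derivation algebra is the image of $\frs$ in $\frgl(S)$. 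Non-triviality (hence non-degeneracy, by simplicity of $\cL\oplus\cV$) of $(\cdot\vert\cdot)$ holds because after scalar extension the form recovers (up to a scalar) the non-degenerate form of $T$.

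The main obstacle is essentially bookkeeping: confirming that the hypotheses of Corollary \ref{co:Z2} genuinely hold over $\FF$ (faithfulness and complete reducibility), and that the scalar ambiguity in $\mu$ translates exactly into the scalar ambiguity in both $(\cdot\vert\cdot)$ and $[\cdot,\cdot,\cdot]$ simultaneously---that is, into an $\alpha$-shift as in \eqref{eq_shift}. Uniqueness up to scalars of the pair $\bigl((\cdot\vert\cdot),[\cdot,\cdot,\cdot]\bigr)$ then follows at once, since a second such pair on $S$ would produce a second $\mu$, which by Corollary \ref{co:Z2} must be proportional to the first, forcing the two symplectic triple system structures to differ by the same scalar.
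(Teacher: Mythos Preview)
Your proposal is correct and follows essentially the same route as the paper: build $\cL=\frsp(V)\oplus\frs$ and $\cV=V\otimes_\FF S$, invoke Corollary~\ref{co:Z2} and Remark~\ref{re:Z2} to get a unique-up-to-scalar $\mu$, then decompose $\mu$ via $\frsp(V)$-invariance to extract $(\cdot\vert\cdot)$ and $d_{x,y}$. You are somewhat more explicit than the paper in checking faithfulness and complete reducibility of $\rho$ and in arguing that the scalar ambiguity in $\mu$ is exactly an $\alpha$-shift, but the architecture is identical.
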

\begin{proof}
Let $V$ be a two-dimensional vector space over $\FF$ endowed with a non-zero 
alternating bilinear form $\langle u\vert v\rangle$. Consider the Lie
algebra $\cL=\frsl(V)\oplus \frs$, and its module $\cV=V\otimes_\FF S$.
By Corollary \ref{co:Z2} and Remark \ref{re:Z2} there is a unique, 
$\cL$-invariant 
bilinear map $\mu\colon \cV\times\cV\rightarrow \cL$, up to scalars, making 
$\cL\oplus\cV$ into a central simple Lie algebra with bracket given by
\eqref{eq:LV}. 

Recall that $\Hom_{\frsl(V)}(V\otimes V,\FF)=\FF\langle\cdot\vert\cdot\rangle$   
and that $\Hom_{\frsl(V)}(V\otimes V,\frsl(V))=\FF\gamma$, with $\gamma(u\otimes v)=\gamma_{u,v}$ the zero trace map defined by 
$\gamma_{u,v}(w)=\langle u\vert w\rangle v+\langle v\vert w\rangle u$. This fact, 
together with the
 $\frsl(V)$-invariance of $\mu$, implies that this  unique product has the form
$\mu(u\otimes x,v\otimes y)=(x\vert y)\gamma_{u,v}+
\langle u\vert v\rangle d_{x,y}$, for an alternating bilinear form 
$(x\vert y)$ and a bilinear map $S\times S\rightarrow \frs$, 
$(x,y)\mapsto d_{x,y}$,  both being completely determined by $\mu$.

Now it is enough to define $[x,y,z]=d_{x,y}(z)$ (the action of $d_{x,y}\in\frs$ on the element $z\in S$).
\end{proof}

\medskip

  The above corollary is a key tool for the classification of the  real simple symplectic triple systems. We simply have to find, for each complex simple symplectic triple system $T$, the real forms of the Lie algebra $\inder(T )$ which admit a representation whose complexification is $T$, and in such case the
  triple product is defined up to scalars.
In other words, we look for real forms of the pairs $\bigl(\inder(T),T\bigr)$, for $T$ any simple symplectic triple system  over $\CC$.
 Here a pair $(\frh,U)$ consisting of a real Lie algebra $\frh$ and 
an $\frh$-module $U$, with corresponding representation 
$\rho\colon \frh\rightarrow \frgl(U)$, is said to be a \emph{real form} of the pair $(\frg,V)$ for a complex Lie algebra $\frg$ and a $\frg$-module $V$, with representation $\mu\colon \frg\rightarrow \frgl(V)$, if there is an isomorphism of complex Lie algebras $\varphi\colon \frh\otimes_\RR\CC\rightarrow \frg$, and a linear isomorphism of complex vector spaces $\Phi\colon U\otimes_\RR\CC\rightarrow V$ such that 
\[
\Phi(\rho_x(u)\otimes \alpha)
=\alpha\mu_{\varphi(x\otimes 1)}\bigl(\Phi(u\otimes 1)\bigr)
\]
for any $x\in \frh$, $u\in U$ and $\alpha\in\CC$. 

\smallskip

Note that  the
 map $x\mapsto \beta x$ gives an isomorphism 
$T^{[\beta^2]}\rightarrow T$ for any $0\ne\beta\in\mathbb F$. In particular, over $\CC$
any shift of $T$ is 
isomorphic  to $T$, while
 over the reals, any shift of $T$ is 
isomorphic either to $T$ or to $T^{[-1]}$.  (It is not clear whether $T$ and $T^{[-1]}$ are isomorphic.) So we define:

\begin{df}\label{df_weakisom}
Two real symplectic triple systems $T$ and $T'$ are said to be 
\emph{weakly isomorphic} if $T'$ is isomorphic either to $T$ or to $T^{[-1]}$.
\end{df}

To summarize,  the uniqueness only up to scalars of the triple product in Corollary \ref{co:Z2_STS} tells us that  the lists of the real forms in   
Section~\ref{se:classification}  of the pairs $\bigl(\inder(T),T\bigr)$, for the simple symplectic triple systems over $\CC$, complete the classification of the simple real symplectic triple 
systems up to \emph{weak isomorphism}.  Finally, 
in Section~\ref{se:classificationII} it will be proven that two simple weakly isomorphic real symplectic
triple systems are actually isomorphic. (This is not trivial at all!)


\section{The split simple symplectic triple systems}\label{se:split}

The uniqueness in Corollary \ref{co:Z2_STS} shows that the 
classification of the simple symplectic triple systems over an 
algebraically closed field $\FF$ of characteristic $0$ is equivalent to the 
classification of order $2$ automorphisms (i.e., gradings by $\ZZ/2$) of 
the simple Lie algebras over $\FF$ such that the even part is the direct 
sum of a copy of $\frsl_2$ and a reductive Lie algebra $\frs$, and the 
odd part is, as a module for the even part, the tensor product of the 
two-dimensional module for $\frsl_2$ and a module $S$ for $\frs$. The possible pairs $(\frs,S)$, up to isomorphism, can be read from the classification of finite order automorphisms (see, e.g., 
\cite[Chapter 8]{Kac} and \cite{Eld06}), where two pairs $(\frs,S)$ and $(\frs',S')$ are \emph{isomorphic} if and only if there is an isomorphism 
$\varphi\colon \frs\rightarrow\frs'$ of Lie algebras and a linear isomorphism
$\psi\colon S\rightarrow S'$ such that $\psi(s.x)=\varphi(s).\psi(x)$ for all 
$s\in\frs$ and $x\in S$. 

The complete list of these pairs, up to isomorphism, is given in the following list, whose items are labeled 
according to the type of the envelope:

\begin{description}
\item[Special] 
$\bigl(\frgl(W),W\oplus W^*\bigr)$, for a non-zero vector space $W$.

\item[Orthogonal] 
$\bigl(\frsp(V)\oplus\frso(W),V\otimes W\bigr)$, where $V$ is a vector space of dimension $2$ endowed with a non-zero alternating bilinear form $\langle\cdot\vert\cdot\rangle$, and $W$ is a vector space of dimension $\geq 3$ endowed with a 
non-degenerate symmetric bilinear form.

\item[Symplectic]
$\bigl(\frsp(W),W\bigr)$ for a non-zero even-dimensional vector space $W$ endowed with a non-degenerate alternating bilinear form.

\item[$G_2$-envelope] $(\fra_1,V(\varpi_3))$.

\item[$F_4$-envelope] $(\frc_3,V(\varpi_3))$.

\item[$E_6$-envelope] $(\fra_5,V(\varpi_3))$.

\item[$E_7$-envelope] $(\frd_6,V(\varpi_6))$.

\item[$E_8$-envelope] $(\fre_7,V(\varpi_1))$.
\end{description}

In this list, the simple Lie 
algebra over $\FF$ of type $L_n$ is 
denoted $\mathfrak{l}_n$, a Cartan subalgebra and system 
$\Pi=\{\alpha_1,\ldots,\alpha_n\}$ of simple roots are fixed, with the 
ordering used in \cite{Oni},  $\varpi_1,\ldots,\varpi_n$ are the 
corresponding fundamental weights, and for any dominant weight 
$\Lambda$, $V(\Lambda)$ denotes the irreducible module of highest
weight $\Lambda$.

For each case, let us describe explicitly the triple product and the
alternating bilinear form. Actually, this works over an arbitrary field
of characteristic $0$, thus providing the list of the \emph{split simple symplectic triple systems}. Thus, in what follows, $\FF$ will denote an arbitrary field of characteristic $0$, $V$ will denote a 
two-dimensional vector space endowed with a non-zero alternating bilinear form $\langle u\vert v\rangle$. Details on the classical cases can be found in \cite[Examples 2.26]{Eld06}, while for the exceptional cases,  instead of the models based on Jordan algebras in \cite{Eld06}, new models based on Linear Algebra will be given.

\medskip

\subsection{Special type}\label{ss:special}\quad 
Let $W$ be a non-zero finite-dimensional
vector space over our ground field $\FF$, and let $T$ be the direct 
sum of $W$ and its dual: $T=W\oplus W^*$. 
Then $\bigl(T,[\cdot,\cdot,\cdot],(\cdot\vert\cdot)\bigr)$ is a simple symplectic triple system with
\begin{equation}\label{eq:special}
\begin{array}{c}
[x,f,y]=f(x)y+2f(y)x,\quad [f,x,g]:=-f(x)g-2g(x)f,\quad
[x,y,\cdot]=0=[f,g,\cdot],\\[2pt]
(f\vert x)=-(x\vert f)=f(x),\qquad (x\vert y)=0=(f\vert g)
\end{array}
\end{equation}
for any $x,y\in W$ and $f,g\in W^*$.

The inner derivation algebra and the standard enveloping algebra are the following:
\[
\inder(T)\cong\frgl(W),\qquad \frg(T)\cong\frsl(V\oplus W).
\]
Actually, this symplectic triple system is related to the natural grading by $\ZZ/2$ of $\frsl(V\oplus W)$  obtained by splitting $\frgl(V\oplus W)$ into blocks corresponding to $V$ and $W$. The even part consists of the block diagonal endomorphisms, and the odd part of the off block diagonal endomorphisms.

\medskip

\subsection{Orthogonal type}\label{ss:orthogonal}\quad
Let $(W,\bup)$ be a   vector space  of  dimension at least $3$  endowed with a
non-degenerate symmetric bilinear form $\bup$. Consider the vector
space $T=V\otimes W$. Then 
$\bigl(T,[\cdot,\cdot,\cdot],(\cdot\vert\cdot)\bigr)$ is a simple symplectic triple system with
\begin{equation}\label{eq:orthogonal}
\begin{split}
&[u\otimes x,v\otimes y,w\otimes z]=
   \frac12\bigl(\langle u\vert w\rangle v+\langle  v\vert w\rangle u\bigr)
   \otimes \bup(x,y) z\\
&\hspace*{2in}
 +\langle u\vert v\rangle w\otimes\bigl(\bup(x,z)y-\bup(y,z)x\bigr),\\[2pt] 
&(u\otimes x\vert v\otimes y)=\frac12\langle u\vert v\rangle \bup(x,y),
\end{split}
\end{equation}
for any $u,v,w\in V$ and $x,y,z\in W$. 

  Moreover, the inner derivation algebra and the standard enveloping algebra are
\[
\inder(T) \cong \frsl(V)\oplus\frso(W,\bup),\qquad
\frg(T)\cong  \frso\bigl((V\otimes V)\oplus W,\tilde\bup\bigr).
\]
Here $\tilde\bup$ is the non-degenerate symmetric bilinear form on the
direct sum $(V\otimes V)\oplus W$ that restricts to $\bup$ on $W$, satisfies $\tilde\bup(V\otimes V,W)=0$, and restricts to 
\[
(u_1\otimes v_1,u_2\otimes v_2)\mapsto 
\langle u_1\vert u_2\rangle \langle v_1\vert v_2\rangle
\]
on $V\otimes V$. Again, this corresponds to the natural $\ZZ/2$-grading
on $\frso\bigl((V\otimes V)\perp W,\tilde\bup\bigr)$ that is naturally associated to the orthogonal sum (relative to $\tilde\bup$) 
$(V\otimes V)\perp W$.

\medskip

\subsection{Symplectic type}\label{ss:symplectic}\quad
Let $T $ be an even-dimensional vector space endowed with a  
 non-degenerate alternating bilinear form   $(\cdot\vert\cdot)$. 
Then $(T,[\cdot,\cdot,\cdot],(\cdot\vert\cdot))$ is a simple symplectic triple system with the triple product given by
\begin{equation}\label{eq:symplectic} 
[x,y,z]=(x\vert z)y+(y\vert z)x,
\end{equation}
for $x,y,z\in T$. 

Moreover, the inner derivation algebra and the standard enveloping algebra are
\[
\inder(T) \cong  \frsp\bigl(T, (\cdot\vert\cdot)\bigr),\qquad
  \frg(T)\cong \frsp\bigl(V\oplus T,(\cdot\vert\cdot)'\bigr).
\]
Here $(\cdot\vert\cdot)'$ is the non-degenerate alternating bilinear 
form on the
direct sum $V\oplus T$ that restricts to $\langle\cdot\vert\cdot\rangle$
 on $V$, to $(\cdot\vert\cdot)$ on $T$, and that satisfies 
$(V\vert T)'=0$. This corresponds to the natural $\ZZ/2$-grading
on $\frsp\bigl(V\perp W,(\cdot\vert\cdot)'\bigr)$ that is naturally associated to the orthogonal sum  
$V\perp W$.

\medskip

\subsection{$G_2$-type}\label{ss:G2}\quad
Denote by  $V_n$ the vector space of the homogeneous polynomials of degree $n$ in two variables $X$ and $Y$. 
For any $f\in V_n$, $g\in V_m$, consider the \emph{transvection}
\[ 
{(f,g)_q=\frac{(n-q)!}{n!}\frac{(m-q)!}{m!}
\sum_{i=0}^q(-1)^{i}\binom{q}{i}
\frac{\partial^q f}{\partial X^{q-i}\partial Y^i}\frac{\partial^q g}{\partial X^i\partial Y^{q-i}}} \in V_{m+n-2q}.
\]

Then the vector space $T=V_3$
of the homogeneous polynomials of degree $3$, endowed with the alternating form and the triple product given by:
\begin{equation}\label{eq:G2}
(f\vert g)=(f,g)_3,\qquad
 [f,g,h]=6((f,g)_2,h)_1,
\end{equation}
for any $f,g,h\in T$,
    is a simple  symplectic triple system, whose inner derivation algebra
is isomorphic to $\frsl_2(\FF)$  and its
standard enveloping Lie algebra is isomorphic to the split simple Lie algebra of type $G_2$. 

There is a misprint in \cite[p.~210]{Eld06} so, for the convenience of the reader, we will justify the assertions above.

 The key is to use the classical Gordan identities for transvections
  \cite[p.~56--57]{GraceYoung}. For $f\in V_m$, $g\in V_n$, $h\in V_p$ and
$\alpha_1,\alpha_2,\alpha_3$ non-negative integers such that 
$\alpha_1+\alpha_2\leq p$, $\alpha_2+\alpha_3\leq m$, 
$\alpha_3+\alpha_1\leq n$, and such that either $\alpha_1=0$ or 
$\alpha_2+\alpha_3= m$, one has
\begin{multline*}
  \sum_{i\geq 0}
  \frac{\binom{n-\alpha_1-\alpha_3}{i}\binom{\alpha_2}{i}}{\binom{m+n-2\alpha_3-i+1}{i}}
             ((f,g)_{\alpha_3+i}, h)_{\alpha_1+\alpha_2-i}     
  \\[-18pt] 
    =(-1)^{\alpha_1} \sum_{i\geq 0}
    \frac{\binom{p-\alpha_1-\alpha_2}{i}\binom{\alpha_3}{i}}
    {\binom{m+p-2\alpha_2-i+1}{i}} ((f,h)_{\alpha_2+i},
    g)_{\alpha_1+\alpha_3-i}.
\end{multline*}
This identity is usually denoted by  
$\left(\begin{smallmatrix}
f& g& h\\
m&n&p\\
\alpha_1&\alpha_2&\alpha_3\end{smallmatrix}\right)$.

Gordan identity  $\left(\begin{smallmatrix} f&g&h\\ 3&3&3\\
0&1&2\end{smallmatrix}\right)$ gives
\begin{equation}\label{eq:eq_paraTdeg2}
   ((f,g)_2,h)_1+\frac 12 (f,g)_3 h = ((f,h)_1, g)_2 + ((f,h)_2, g)_1 +
   \frac 13 (f,h)_3 g.
\end{equation}
Replace $f$ with $h$ in \eqref{eq:eq_paraTdeg2}, and sum both  identities to get
\begin{equation}\label{eq:eq2_paraTdeg2}
     ((f,g)_2, h)_1 + ((h,g)_2, f)_1 + \frac 12 ((f,g)_3 h + (h,g)_3 f)   = 2((f,h)_2, g)_1\,.
\end{equation}
Replace $f$ with $g$ in \eqref{eq:eq2_paraTdeg2}, and substract both  identities to get
\begin{equation}\label{eq:eq3_paraTdeg2}
    2(f,g)_3 h +( h,g)_3f - (h,f)_3g =  
6\left( ((f,h)_2,g)_1-((g,h)_2,f)_1\right)\,.
\end{equation}
Now,   the only scalar $\alpha$  such that the triple product 
${[f,g,h]}:=\alpha((f,g)_2,h)_1$ satisfies the   identities  of a
symplectic triple system in Definition~\ref{df:STS} is $\alpha=6$. 

Over an algebraically closed field, this simple symplectic triple system corresponds to the unique order
two automorphism, up to conjugacy, of the simple Lie algebra of type $G_2$.

\medskip

\subsection{$F_4$-type}\label{ss:F4}\quad
Let $W$ be a six-dimensional vector space, endowed with a 
non-degenerate alternating bilinear form $\baup$. 
 Let 
$\{u_1,u_2,u_3,v_1,v_2,v_3\}$ be a symplectic basis: 
$\baup(u_i,v_i)=-\baup(v_i,u_i)=1$, and all the other values of $\baup$ on 
basic elements are $0$. Consider the corresponding symplectic Lie algebra
$\frsp(W,\baup)$, which is spanned by the endomorphisms 
$\gamma_{u,v}\colon w\mapsto \baup(u,w)v+\baup(v,w)u$. The subalgebra 
$\frh$ of diagonal elements, relative to the basis above,  
is a Cartan subalgebra, and the root system is 
$\Phi=\{\pm\epsilon_i\pm\epsilon_j,\pm 2\epsilon_i: 
1\leq i\neq j\leq 3\}$, where $\epsilon_i$ denotes the weight of $u_i$, 
$i=1,2,3$, under the natural action. A system of simple roots is given by 
$\Pi=\{\epsilon_1-\epsilon_2,\epsilon_2-\epsilon_3,2\epsilon_3\}$.

Let $T$ be the kernel of the  $\frsp(W,\baup)$-invariant map $\bigwedge^3W\rightarrow W$, 
$x_1\wedge x_2\wedge x_3\mapsto 
\baup(x_1,x_2)x_3+\baup(x_2,x_3)x_1+\baup(x_3,x_1)x_2$. 
  The dimension
of $T$ is $\binom63-6=14$, and $T$ is the irreducible module for $\frsp(W,\baup)$
with highest weight 
$\varpi_3=\epsilon_1+\epsilon_2+\epsilon_3$. The element
$u_1\wedge u_2\wedge u_3$ lies in $T_{\varpi_3}$.

Because of Corollary \ref{co:Z2_STS}, up to scalars, there are a unique
$\frsp(W,\baup)$-invariant alternating form $(\cdot\vert\cdot)$ on $T$, and a bilinear map $T\times T\rightarrow \frsp(W,\baup)$, $(x,y)\mapsto d_{x,y}$, such that 
$\bigl(T,[\cdot,\cdot,\cdot],(\cdot\vert\cdot)\bigr)$ is a simple symplectic triple system, for $[x,y,z]=d_{x,y}.z$, that corresponds to the pair $\bigl(\frc_3,V(\varpi_3)\bigr)$ ($F_4$-envelope).

Note that the weights of $W$ are $\pm\epsilon_i$, $1\leq i\leq 3$; and the weights of $\bigwedge^3W$ are 
$\pm\epsilon_1\pm\epsilon_2\pm\epsilon_3$, with multiplicity
$1$, and $\pm\epsilon_i$, $i=1,2,3$, with multiplicity $2$. Hence
the weights of $T$ are 
$\pm\epsilon_1\pm\epsilon_2\pm\epsilon_3$ and 
$\pm\epsilon_i$, $i=1,2,3$, all with multiplicity $1$.

 The subspace $\frsp(W,\baup)\otimes T$ is generated, as a module for $\frsp(W,\baup)$, by 
$\gamma_{u_1,u_1}\otimes (v_1\wedge v_2\wedge v_3)$, which is the tensor 
product of a highest root vector: $\gamma_{u_1,u_1}\in\frsp(W,\baup)_{2\epsilon_1}$, and a lowest 
weight vector: $v_1\wedge v_2\wedge v_3\in T_{-\varpi_3}$, in the ordering imposed by 
$\Pi$. The image of 
$\gamma_{u_1,u_1}\otimes (v_1\wedge v_2\wedge v_3)$ under any 
$\frsp(W,\baup)$-invariant linear map to $T$ lies in the weight space
$T_{\epsilon_1-\epsilon_2-\epsilon_3}$, which is
one-dimensional. We conclude that 
$\Hom_{\frsp(W,\baup)}\bigl(\frsp(W,\baup)\otimes T,T\bigr)$ is 
one-dimensional, and hence spanned by the action of $\frsp(W,\baup)$ on $T$.

Fix the determinant map $\det\colon \bigwedge^6W\rightarrow \FF$ given by 
\[
\det(u_1\wedge u_2\wedge u_3\wedge v_1\wedge v_2\wedge v_3)=1.
\]
This map $\det$ determines an alternating bilinear form on 
$\bigwedge^3 W$:
\begin{equation}\label{eq:F4alternating_form}
( x\vert y)=\det(x\wedge y),
\end{equation} 
which restricts to a 
non-degenerate alternating $\frsp(W,\baup)$-invariant bilinear map on $T$,
also denoted by $(\cdot\vert \cdot)$. In particular, $T$ is self-dual
and $\Hom_{\frsp(W,\baup)}\bigl(T\otimes T,\FF\bigr)$ is spanned by 
$(\cdot\vert\cdot)$ by 
Schur's Lemma.

Because of the self-duality  as 
$\frsp(W,\baup)$-modules of both $T$ and  $\frsp(W,\baup)$, and since the dimension of
 $\Hom_{\frsp(W,\baup)}\bigl(\frsp(W,\baup)\otimes T,T\bigr)$ is $1$, 
so is the dimension of 
$\Hom_{\frsp(W,\baup)}\bigl(T\otimes T,\frsp(W,\baup)\bigr)$.
(Note that for a Lie algebra $\cL$ and $\cL$-modules $U$ and $V$, there are isomorphisms
 $\Hom_{\cL}(U,V)\cong (U^*\otimes V)^{\cL}\cong\Hom_{\cL}(U\otimes V^*,\FF)
 \cong\Hom_{\cL}(V^*,U^*)$, where $U^\cL=\{u\in U:x.u=0\ \forall x\in\cL\}$.)

Up to scalars, the unique $\frsp(W,\baup)$-invariant linear map 
$T\otimes T\rightarrow \frsp(W,\baup)$, 
$x\otimes y\mapsto d_{x,y}$, is given by the formula
\begin{equation}\label{eq:F4dxy}
\trace(fd_{x,y})=-2(f.x\vert y)
\end{equation}
for any $f\in \frsp(W,\baup)$, where $f.x$ denotes the action of 
$f\in\frsp(W,\baup)$ on $x\in T$.

Consider the triple product $[x,y,z]=d_{x,y}.z$ and the alternating form 
$(\cdot\vert\cdot)$ on $T$.
By the uniqueness, up to scalars, given by Corollary 
\ref{co:Z2_STS}, there is a scalar $0\neq\alpha\in\FF$ such that
\begin{equation}\label{eq:F4_alpha}
d_{x,y}.z-d_{x,z}.y=\alpha\bigl(( x\vert z) y 
-( x\vert y) z+2( y\vert z) x\bigr).
\end{equation}
If we prove that $\alpha=1$, then we will have proved that $\bigl(T,[\cdot,\cdot,\cdot],(\cdot\vert\cdot)\bigr)$ is a symplectic triple system.

For $x=u_1\wedge u_2\wedge u_3=y$, and $z=v_1\wedge v_2\wedge v_3$, which belong to $T$,
we have $(x\vert y)=0$, and 
$d_{x,y}=0$, since $2(\epsilon_1+\epsilon_2+\epsilon_3)\notin\Phi$.
 Besides $d_{x,z}$ belongs to the weight space $\frsp(W,\baup)_0$, 
 so
$d_{x,z}= \sum_{i=1}^3\alpha_i\gamma_{u_i,v_i}$, for some scalars 
$\alpha_i\in\FF$. Note that $\gamma_{u_1,v_1}$ takes $u_1$ to 
$-u_1$, leaves $v_1$ fixed and vanishes on the remaining basic elements. Hence
\[
\trace\bigl(\gamma_{u_1,v_1}d_{x,z}\bigr)=
 \alpha_1\trace\bigl(\gamma_{u_1,v_1}^2\bigr)=2\alpha_1
\]
is equal to
\[
\begin{split}
-2(\gamma_{u_1,v_1}.x\vert z)
  &=-2( \gamma_{u_1,v_1}.(u_1\wedge u_2\wedge u_3)\vert
          v_1\wedge v_2\wedge v_3)\\
 &= -2( -u_1\wedge u_2\wedge u_3\vert 
       v_1\wedge v_2\wedge v_3)=2.
\end{split}
\]
We conclude that $\alpha_1=1$, and analogously  $\alpha_2=\alpha_3=1$. 
Thus we get
\[
-d_{x,z}.y=-\Bigl(\sum_{i=1}^3\gamma_{u_i,v_i}\Bigr).
  (u_1\wedge u_2\wedge u_3)=-(-1-1-1)(u_1\wedge u_2\wedge u_3)=3y,
\]
while $(x\vert z) y-( x\vert y) z
+2( y\vert z) x=y+0+2x=3y$ too.  This shows that indeed $\alpha=1$.

 To summarize, $\bigl(T,[\cdot,\cdot,\cdot],(\cdot\vert\cdot)\bigr)$ is a simple symplectic triple system, 
for the alternating form $(\cdot\vert\cdot)$ defined in \eqref{eq:F4alternating_form} and the triple product $[x,y,z]=d_{x,y}.z$
for $d_{x,y}$ defined by
\eqref{eq:F4dxy}.
Its inner derivation algebra is isomorphic to $\frsp(W,\baup)\cong\frsp_6(\FF)$ and its standard 
enveloping algebra is isomorphic to the split exceptional simple Lie algebra of type $F_4$.

\medskip

\subsection{$E_6$-type}\label{ss:E6}\quad
This case has been considered in \cite[Lemma 6.45]{EKmon}. For the benefit of the reader, we give the details.

As for the $F_4$-type, let $W$ be a vector space of dimension $6$.
Fix  $\{e_1,e_2,e_3,e_4,e_5,e_6\}$ a basis of $W$. The subalgebra $\frh$ of
diagonal elements of $\frsl(W)$, relative to this basis, is a Cartan subalgebra, with root system $\Phi=\{\pm(\epsilon_i-\epsilon_j): 1\leq i<j\leq 6\}$. 
Here $\epsilon_i$ denotes the weight of $e_i$  under the natural $\frsl(W)$-action on $W$, for any $i=1,\dots,6$.
A system of simple roots   of $\Phi$  is given by $\Pi=\{\epsilon_i-\epsilon_{i+1}: 1\leq i\leq 5\}$.

The space $T=\bigwedge^3W$ is the irreducible module for $\frsl(W)$
with highest weight $\varpi_3=\epsilon_1+\epsilon_2+\epsilon_3$. 
The element $e_{123}\bydef e_1\wedge e_2\wedge e_3$ lies in the weight space $T_{\varpi_3}$. We will use the notation $e_{i_1\ldots i_r}=e_{i_1}\wedge\cdots\wedge e_{i_r}$ throughout.

Fix the determinant map $\det\colon \bigwedge^6W\rightarrow \FF$ given by $\det(e_{123456})=1$ and, as in \eqref{eq:F4alternating_form}, 
consider the non-degenerate alternating bilinear form $(\cdot\vert\cdot)$
 on $T=\bigwedge^3W$ given by 
\begin{equation}\label{eq:E6_alternating}
(x\vert y)=\det(x\wedge y).
\end{equation} 
This is
the unique, up to scalars, $\frsl(W)$-invariant bilinear form on $T$.

With the same arguments as for the $F_4$-type, the unique, up to scalars, $\frsl(W)$-invariant linear map 
$T\otimes T\rightarrow \frsl(W)$: $x\otimes y\mapsto d_{x,y}$, is given by the formula
\begin{equation}\label{eq:E6dxy}
\trace(fd_{x,y})=-2(f.x\vert y)\,.
\end{equation}
(There is a misprint in \cite[Lemma 6.45]{EKmon}, where $24$ appears instead of $2$ in the formula above.)

Again, as for the $F_4$-type, there is a scalar $0\neq \alpha\in\FF$
such that \eqref{eq:F4_alpha} holds. For $x=e_{123}=y$ and $z=e_{456}$ we have $d_{x,y}=0$, and $d_{x,z}$  is the endomorphism of $W$ with coordinate matrix (in the given basis) $\diag(-1,-1,-1,1,1,1)$ due to 
\eqref{eq:E6dxy}, so that $d_{x,z}.y=-3y$. Also 
$(x\vert y)=0$ and $(x\vert z)=1=(y\vert z)$. It follows that
$\alpha=1$.

Therefore $\left(T=\bigwedge^3W,[\cdot,\cdot,\cdot],(\cdot\vert\cdot)\right)$ is a simple symplectic triple system, with inner derivation algebra isomorphic to $\frsl(W)\cong\frsl_6(\FF)$ and standard 
enveloping algebra isomorphic to the split exceptional simple Lie algebra of type $E_6$.

\medskip

\subsection{$E_7$-type}\label{ss:E7}\quad
Let $W$ be a six-dimensional vector space and denote by $W^*$ its dual. Fix a basis $\{e_i: 1\leq i\leq 6\}$ of $W$, and consider its
dual basis $\{e^i: 1\leq i\leq 6\}$ in $W^*$. The direct sum $W\oplus W^*$ is endowed with the non-degenerate quadratic form 
given by $\qup(u+f)=f(u)$ for all $u\in W$ and $f\in W^*$.
The corresponding orthogonal Lie algebra $\frso(W\oplus W^*,\qup)$ is
the split simple Lie algebra of type $D_6$. The subalgebra $\frh$ of diagonal elements, relative to the basis $\{e_1,\ldots,e_6,e^1,\ldots,e^6\}$ of $W\oplus W^*$, is a Cartan subalgebra. The corresponding root system is $\Phi=\{\pm\epsilon_i\pm\epsilon_j:
1\leq i<j\leq 6\}$, where $\epsilon_i$ is the weight of $e_i$ in the natural action of $\frso(W\oplus W^*,\qup)$ on $W\oplus W^*$. A system of simple roots is $\Pi=\{\epsilon_1-\epsilon_2,
\epsilon_2-\epsilon_3,\epsilon_3-\epsilon_4,\epsilon_4-\epsilon_5,
\epsilon_5-\epsilon_6,\epsilon_5+\epsilon_6\}$. The fundamental dominant weight $\varpi_6$, relative to this system, is 
$\varpi_6=\frac12(\epsilon_1+\cdots+\epsilon_6)$.

  On the exterior algebra $\bigwedge W$ write, as above, 
  $ e_{i_1\cdots i_r}=e_{i_1}\wedge \cdots\wedge e_{i_r}$ if each $1\le i_j\le6$, and shorten it to $e_I=e_{i_1\cdots i_r}$ for any 
 sequence
$I=(i_1,\ldots,i_r)$ with $1\leq i_1<\cdots<i_r\leq 6$.    
 
Consider the linear form $\det\colon \bigwedge W\rightarrow \FF$ which is trivial on $\bigwedge^iW$ for $i<6$ and such that 
$\det(e_{123456})=1$. Also, let $s\mapsto \widehat{s}$ be
the involution of $\bigwedge W$ which is the identity on $W=\bigwedge^1W$, and hence, for $I=(i_1,\ldots,i_r)$ with $1\leq i_1<\cdots <i_r\leq 6$, 
$\widehat{e_I}=e_{i_r\cdots i_1}=(-1)^{\binom{r}{2}}e_I$.

Finally, consider the non-degenerate bilinear form 
\begin{equation}\label{eq:E7_b}
\begin{split}
\baup\colon  \bigwedge W\times\bigwedge W&\longrightarrow \FF\\
(s,t)\quad &\mapsto \ \det(\widehat{s}\wedge t).
\end{split}
\end{equation}
This is alternating, and the even and odd parts: $\bigwedge\subo W$ and $\bigwedge\subuno W$, are orthogonal.

Denote by $\tau_{\baup}$ the involution of the endomorphism algebra 
$\End_\FF\bigl(\bigwedge W\bigr)$ induced by $\baup$:
\[
\baup\bigr(\varphi(s),t\bigr)=\baup\bigr(s,\tau_{\baup}(\varphi)(t)\bigr),
\]
for all $s,t\in\bigwedge W$ and $\varphi\in
\End_\FF\bigl(\bigwedge W\bigr)$.

  Consider  the linear map $W\oplus W^*\rightarrow 
\End_\FF\bigl(\bigwedge W\bigr)$ that sends any $u\in W$ to the left
multiplication $l_u$ by $u$ on $\bigwedge W$, and any $f\in W^*$ to
the odd superderivation $\delta_f$ of $\bigwedge W$ such that
$\delta_f(u)=f(u)$ for all $u\in W$. (The fact that $\delta_f$ is an odd
superderivation means that 
$\delta_f(s\wedge t)=\delta_f(s)\wedge t+(-1)^{\deg(s)}s\wedge \delta_f(t)$ for any homogeneous (even or odd) elements $s,t$ of $\bigwedge W$.) 
Note that $l_u^2=0=\delta_f^2$, that 
$\delta_f(\widehat{s})=-(-1)^{\deg(s)}\widehat{\delta_f(s)}$, and that 
$\tau_{\baup}(l_u)=l_u$ and $\tau_{\baup}(\delta_f)=\delta_f$ for any
$u\in W$ and $f\in W^*$.
It follows that $l_u\delta_f+\delta_fl_u=f(u)\id$ for all $u\in W$ and
$f\in W^*$,  and hence this linear map extends to a homomorphism
of algebras with involution (see \cite[\S 8]{KMRT} or \cite{Eld07}): 
\begin{equation}\label{eq:E7_Lambda}
\Lambda\colon  \bigl(\Cl(W\oplus W^*,\qup),\tau\bigr)\longrightarrow
\bigl(\End_\FF(\bigwedge W),\tau_{\baup}\bigr),
\end{equation}
where $\Cl(W\oplus W^*,\qup)$ is the Clifford algebra associated to the
quadratic form $\qup$, and $\tau$ is its canonical involution, that is, it restricts to
the identity on $W\oplus W^*$. By dimension count, $\Lambda$ is an 
isomorphism. Moreover, $\Lambda$ restricts to an isomorphism, also denoted by 
$\Lambda$, of the 
even subalgebras:
\[ 
\textstyle{\Lambda\colon  \bigl(\Cl\subo(W\oplus W^*,\qup),\tau\bigr)\longrightarrow
\bigl(\End_\FF(\bigwedge\subo W)\times 
\End_\FF(\bigwedge\subuno W),\tau_{\baup}\bigr).}
\]

The orthogonal Lie algebra $\frso(W\oplus W^*,\qup)$ lives inside
the even Clifford algebra $\Cl\subo(W\oplus W^*,\qup)$: The endomorphism $\sigma_{x,y}:z\mapsto \qup(x,z)y-\qup(y,z)x$ in
$\frso(W\oplus W^*,\qup)$ corresponds to $\frac{ -1}2(xy-yx)$ in 
$\Cl\subo(W\oplus W^*,\qup)$, for all $x,y\in W\oplus W^*$, because
in the Clifford algebra we have $[[x,y],z]=xyz-yxz-zxy+zyx=x(yz+zy)-y(xz+zx)-(zx+xz)y+(zy+yz)x
=2\qup(y,z)x-2\qup(x,z)x=-2\sigma_{x,y}(z)$.
(Throughout this paper   the product in the Clifford algebra is denoted by juxtaposition, and the polar
form of $\qup$ is defined by $\qup(x,y)=\qup(x+y)-\qup(x)-\qup(y)$, which
becomes $xy+yx$  in the Clifford algebra.) Note that the element 
$\sigma_{e^i,e_i}\in\frso(W\oplus W^*\qup)$ satisfies $\sigma_{e^i,e_i}(e_i)=e_i$, 
 $\sigma_{e^i,e_i}(e_j)=0$,  $\sigma_{e^i,e_i}(e^i)=-e^i$, and  $\sigma_{e^i,e_i}(e^j)=0$
 for $j\neq i$.

 Take the half-spin module
$T=\bigwedge\subo W$  of $\frso(W\oplus W^*,\qup)$, that is, the
representation of $\frso(W\oplus W^*,\qup)$  obtained by first
embedding it on $\Cl\subo(W\oplus W^*,\qup)$ and then composing 
with $\Lambda$.  Recall, for any increasing sequence $J$,  that 
$\sigma_{e^i,e_i}.e_J=    
\frac{1}2\big(e_i\wedge \delta_{e^i}(e_J)-\delta_{e^i}(e_i\wedge e_J) \big)$ 
equals $\frac12e_J$ if $i\in J$  and $-\frac12e_J$ if $i\notin J$. Thus, for 
$I=(1,2,3,4,5,6)$, the element $e_I\in T_{\frac12(\epsilon_1+\cdots+\epsilon_6)}$ is a highest weight vector in $T$.
Thus $T$ is the irreducible module with highest weight $\varpi_6$. 
(Note that had we swapped the last two elements in our system of 
simple roots $\Pi$, we would have had to consider the other half-spin 
module: $\bigwedge\subuno W$, of highest weight $\frac12(\epsilon_1+\cdots+\epsilon_5-\epsilon_6)$, as  the irreducible module with highest weight $\varpi_6$.)

By irreducibility, the restriction of $\baup$  is the unique, up to scalars,
$\frso(W\oplus W^*,\qup)$-invariant non-zero bilinear form on $T$, and
 the unique, up to scalars,
$\frso(W\oplus W^*,\qup)$-invariant bilinear map 
$T\times T\rightarrow \frso(W\oplus W^*,\qup)$ is given by the formula
\begin{equation}\label{eq:E7dxy}
\trace\bigl(\sigma d_{x,y})=-4\baup(\sigma.x,y),
\end{equation}
for $\sigma\in \frso(W\oplus W^*,\qup)$ and $x,y\in T$. (See, e.g., \cite[Proposition 2.19]{Eld07}.)

Again, as for the $F_4$ or $E_6$ types, with $(x\vert y)=\baup(x,y)$, there is a scalar $0\neq \alpha\in\FF$
such that \eqref{eq:F4_alpha} holds. For $x=y=1(=e_\emptyset)$ and 
$z=e_{123456}$, we have $d_{x,y}=0$, while $d_{x,z}$, which belongs to the Cartan subalgebra $\frh$, coincides with  the endomorphism 
$\sum_{i=1}^6\sigma_{e^i,e_i}$  due to 
\eqref{eq:E7dxy}. 
   Thus 
$d_{x,z}.y= \sum_{i=1}^6\,\sigma_{e^i,e_i} .e_\emptyset
=\sum_{i=1}^6\frac{-1}2e_\emptyset=-3y  $. 
 Also 
$(x\vert y)=0$ and $(x\vert z)=1=(y\vert z)$. It follows that
$\alpha=1$.

Therefore, $\bigl(T=\bigwedge\subo W,[\cdot,\cdot,\cdot],(\cdot\vert\cdot)\bigr)$ is a simple symplectic triple system, with inner derivation algebra isomorphic to $\frso(W\oplus W^*,\qup)$ and standard 
enveloping algebra isomorphic to the split exceptional simple Lie algebra of type $E_7$.

\medskip

\subsection{$E_8$-type}\label{ss:E8}\quad
Let $U$ be an eight-dimensional vector space. Fix a basis 
$\{e_i: 1\leq i\leq 8\}$ and its dual basis 
$\{e^i: 1\leq i\leq 8\}$ in $U^*$. For any sequence 
$I=(i_1,\ldots,i_r)$ with $1\leq i_1<\cdots<i_r\leq 8$, write 
$e_I=e_{i_1}\wedge\cdots\wedge e_{i_r}$ in $\bigwedge U$, and
$e^I=e^{i_1}\wedge \cdots\wedge e^{i_r}$ in $\bigwedge U^*$.

Consider the linear map $\det\colon \bigwedge U\rightarrow \FF$ that 
annihilates $\bigwedge^i U$, $0\leq i\leq 7$, and such that 
$\det(e_{12345678})=1$. It induces a non-degenerate bilinear form
$(\cdot\vert\cdot)_\wedge$ on $\bigwedge U$ by means of 
$(x\vert y)_\wedge=\det(x\wedge y)$ for all $x,y$.  Its restriction to 
$\bigwedge^4U$ is a (non-degenerate) symmetric bilinear form.

Using these ingredients, Adams gives in \cite[Chapter 12]{Adams} the following explicit construction of the  split simple Lie algebra of type $E_7$
and of its irreducible $56$-dimensional representation. This  $56$-dimensional 
module  is the vector space on which
   the last split simple symplectic triple system, with $E_7$ as its inner derivation algebra, is built. More precisely, 
the vector space
$\cL=\frsl(U)\oplus\bigwedge^4U$ is the split simple Lie algebra of type $E_7$, 
with bracket given by
\begin{itemize}
\item 
the usual bracket in $\frsl(U)$,
\item 
$[f,x]=f.x$, the natural action of $f\in\frsl(U)$ on 
$x\in\bigwedge^4 U$,
\item 
for $x,y\in\bigwedge^4U$, $[x,y]$ is the element in $\frsl(U)$ determined by the condition 
\[
\trace(f[x,y])=(f.x\vert y)_\wedge
\] 
for all $f\in\frsl(U)$.
\end{itemize}
The decomposition
$\cL=\frsl(U)\oplus\bigwedge^4U$ is a grading by $\ZZ/2$.

The Killing form of $\cL$ is $36(\cdot\vert\cdot)_{\cL}$, where
$(\frsl(U)\vert\bigwedge^4U)_{\cL}=0$, 
$(f\vert g)_{\cL}=\trace(fg)$, and 
$(x\vert y)_{\cL}=(x\vert y)_\wedge\bigl(=\det(x\wedge y)\bigr)$, for 
$f,g\in\frsl(U)$ and $x,y\in\bigwedge^4 U$.  

Moreover, $(\cdot\vert\cdot)_\wedge$ gives $\frsl(U)$-invariant linear maps, for $0\leq i\leq 8$:
\begin{equation}\label{eq:E8Phi}
\begin{split}
\textstyle{\Phi_i\colon \bigwedge^i U}&\textstyle{\longrightarrow 
     \left(\bigwedge^{8-i}U\right)^*\cong\bigwedge^{8-i}U^*}\\
x\ &\mapsto\quad (x\vert\cdot)_\wedge,
\end{split}
\end{equation}
where $\bigwedge^iU^*$ is identified with 
$\left(\bigwedge^iU\right)^*$ naturally:
\begin{equation}\label{eq:exterior_dual}
f_{1}\wedge\cdots\wedge f_i\leftrightarrow
\Bigl(v_1\wedge\dots \wedge v_i\mapsto \det\bigl(f_k(v_j)\bigr)_{k,j}\Bigr).
\end{equation} 
Finally, take $T=\bigwedge^2U\oplus\bigwedge^2U^*$, with the action of 
$\cL$ given by:
\begin{itemize}
\item the natural action of $\frsl(U)$ on both $\bigwedge^2U$ and
$\bigwedge^2U^*$,

\item $[x,p]=\Phi_6(x\wedge p)\in\bigwedge^2U^*$, for $x\in\bigwedge^4U$ and
$p\in\bigwedge^2U$,

\item $[x,q]=\Phi_2^{-1}\bigl(\Phi_4(x)\wedge q\bigr)\in\bigwedge^2U$, for $x\in \bigwedge^4U$ and $q\in\bigwedge^2U^*$.
\end{itemize}
Then $T$ is the only $56$-dimensional irreducible module for $\cL$, that is, the 
irreducible module with highest weight $\varpi_1$ (once a Cartan
subalgebra and a system of simple roots is chosen). 

\smallskip

Using these previous results from \cite{Adams}, we proceed as follows to determine the structure of simple symplectic triple system on the
irreducible $\cL$-module $T$.

Given two disjoint increasing sequences $I=(i_1,\ldots,i_r)$ and 
$J=(j_1,\ldots,j_s)$, its union $I\cup J$ gives another increasing 
sequence. Let $(-1)^{IJ}$ be $1$ or $-1$ according to the rule 
$e_Ie_J=(-1)^{IJ}e_{I\cup J}$ (that is, the sign of the permutation). Also denote by $\ol{I}$ be the 
increasing sequence whose underlying set is 
$\{1,\ldots,8\}\setminus I$. The size of a sequence $I$ will be denoted by
$\lvert I\rvert$. The isomorphisms $\Phi_i$ in \eqref{eq:E8Phi} are then given by $e_I\mapsto (-1)^{I\ol{I}}e^{\ol{I}}$  (where $i=\lvert I\rvert$).

With these notations, the action of $\bigwedge^4U$ on $T$ works
as follows, for $I$ and $J$ increasing sequences with $\lvert I\rvert=4$ and $\lvert J\rvert=2$,
\[
e_I.e_J=\begin{cases} 
	0&\text{if $I\cap J\neq \emptyset$,}\\
	(-1)^{IJ}(-1)^{(I\cup J)(\ol{I\cup J})}e^{\ol{I\cup J}}
	&\text{otherwise;}
\end{cases}
\]
and
\[
e_I.e^J=\begin{cases}
	0&\text{if $J\not\subseteq I$,}\\
	(-1)^{I\ol{I}}(-1)^{\ol{I}J}
	(-1)^{(\ol{I}\cup J)(\ol{\ol{I}\cup J})}
	e_{I\cap\ol{J}}&\text{otherwise.}
\end{cases}
\]
(Note that $\ol{\ol{I}\cup J}=I\cap\ol{J}$.)

Endow $T=\bigwedge^2U\oplus\bigwedge^2U^*$ with the 
non-degenerate alternating bilinear form $(\cdot\vert\cdot)$ such that $\bigwedge^2U$
and $\bigwedge^2U^*$ are totally isotropic subspaces, and such
that $(u_1\wedge u_2\vert\omega_1\wedge\omega_2)=
\det\bigl(\omega_i(u_j)\bigr)$. This alternating form is $\cL$-invariant.

Adams also shows in \cite[Theorem 12.4]{Adams} that if the map 
$\circ\colon T\times T\rightarrow \cL$, $(x,y)\mapsto x\circ y$ is defined by
\[
(l\vert x\circ y)_\cL=(l.x\vert y)
\]
for $l\in\cL$ and $x,y\in T$, then $x\circ y$ is symmetric and
\begin{itemize}
\item $x\circ y=-x\wedge y\in\bigwedge^4U$ for
 $x,y\in\bigwedge^2U$,
\item $x\circ y=\Phi_4^{-1}(x\wedge y)\in\bigwedge^4U$ for 
$x,y\in\bigwedge^2U^*$,
\item for $u_1,u_2\in U$ and $\omega_1,\omega_2\in U^*$,
\[
\begin{split}
(u_1\wedge u_2)\circ(\omega_1\wedge \omega_2)
	&=\omega_1(u_1)u_2\otimes\omega_2
		-\omega_1(u_2)u_1\otimes\omega_2
		-\omega_2(u_1)u_2\otimes\omega_1\\
&\qquad +\omega_2(u_2)   u_1\otimes\omega_1
		-\frac14\det\bigl(\omega_i(u_j)\bigr)\id_U\,\in\frsl(U),
\end{split}
\]
where $u\otimes\omega\in\frgl(U)$ denotes the map 
$u'\mapsto \omega(u')u$, for $u,u'\in U$ and $\omega\in U^*$.
\end{itemize}

Define $d_{x,y}$ for $x,y\in T$ by $d_{x,y}=-2x\circ y$, that is,
$d_{x,y}$ is defined by the equation
\begin{equation}\label{eq:E8split_dxy}
\trace(ld_{x,y})=-2(l.x\vert y)
\end{equation}
for $l\in\cL$ and $x,y\in T$. (Recall that $\trace(ld_{x,y})=(l\vert d_{x,y})_\cL$.)

As in the previous cases, $(\cdot\vert\cdot)$ is the unique, up to scalars, non-zero $\cL$-invariant bilinear form, and $d_{x,y}$ is the
unique, up to scalars, non-zero $\cL$-invariant bilinear map
$T\times T\rightarrow \cL$. Therefore there is a scalar
$0\neq \alpha\in\FF$ such that \eqref{eq:F4_alpha} holds. Let us find it.

Take $x=z=e_{12}$ and $y=e^{12}$. On one hand, $d_{x,z}=0$ and 
$d_{x,y}=-2\bigl(e_2\otimes e^2+e_1\otimes e^1-\frac14\id\bigr)$,
so $d_{x,y}.z=-3e_{12}$ (since $\id_U.z=2z$). On the other hand, $(x\vert z)=0$, 
$(x\vert y)=(z\vert y)=1$, so 
$(x\vert z)y-(x\vert y)z+2(y\vert z)x=-3e_{12}$. We thus conclude that $\alpha=1$, and hence,  with $[x,y,z]=d_{x,y}.z$, the triple
$\bigl(T=\bigwedge^2U\oplus\bigwedge^2U^*,[\cdot,\cdot,\cdot],(\cdot\vert\cdot)\bigr)$ is a simple
symplectic triple system with inner derivation algebra the split exceptional simple Lie algebra of type $E_7$ (isomorphic to $\cL=\frsl(U)\oplus\bigwedge^4U$),
and standard enveloping algebra isomorphic to the split exceptional
simple Lie algebra of type $E_8$.

\smallskip

\begin{remark}
It turns out \cite{Eld06} that this same classification and list of  split examples work, with minor modifications,  over fields of characteristic 
$\neq 2,3$.
\end{remark}

\medskip


\section{Non-split simple real symplectic triple 
systems}\label{se:nonsplit}

Our aim is to exhibit a list of some real non-split symplectic triple systems that  will be proved later on to exhaust all the possibilities. 

For convenience, first we will relate the signatures of the Killing forms of the Lie algebras $ \g(T)$ and $ \inder(T)$ associated to a real simple symplectic triple system $\bigl(T,[\cdot,\cdot,\cdot],(\cdot\vert\cdot )\bigr)$. 

\subsection{Some remarks on signatures}\label{ss:signature}

Let us denote by $\sig(\mathfrak{a})$   the signature of the Killing form 
$\kappa_{\mathfrak{a}}$ of any semisimple Lie algebra $\mathfrak{a}$.
The signature is understood here as the difference between the number of 
elements in an orthogonal basis with positive `norm' and the number of them
with negative `norm'.

\begin{lemma}\label{co_sigparteimpar}
Let $T$ be a real simple symplectic triple system.
If   $\kappa$ denotes the Killing form of $ \g(T)$, then
there exists a skew-symmetric bilinear map $\eta\colon T\times T\to\RR$  such that
\begin{equation}\label{eq_signneutra}
\kappa(a\otimes x,b\otimes y)=\langle a\vert b\rangle \eta(x,y),
\end{equation}
for any $a,b\in V$ and $x,y\in T$.
In particular, the signature of the restriction 
$\kappa\vert_{\g(T)_{\bar1}}$ is zero.
\end{lemma}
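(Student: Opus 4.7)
The plan is to exploit the invariance of the Killing form $\kappa$ under the subalgebra $\spf(V,\langle\cdot\vert\cdot\rangle)$ sitting inside $\g(T)_{\bar0}$. By \eqref{eq_tipoenvolvente}, $\g(T)_{\bar1}=V\otimes T$ as an $\spf(V)$-module, with $\spf(V)$ acting on the first tensor factor alone, so the restriction of $\kappa$ to $\g(T)_{\bar1}\times\g(T)_{\bar1}$ is a symmetric $\spf(V)$-invariant bilinear form on $(V\otimes T)\times(V\otimes T)$.

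Since $V$ is an irreducible two-dimensional self-dual $\spf(V)$-module, the decomposition $V\otimes V=\Sym^2 V\oplus\wedge^2 V$ (with $\Sym^2 V$ the adjoint module and $\wedge^2 V\cong\RR$ trivial) shows that $\dim\Hom_{\spf(V)}(V\otimes V,\RR)=1$, generated by $\langle\cdot\vert\cdot\rangle$. I would then apply Lemma~\ref{le_Vinberg} to $L=\spf(V)$ with $U_1=U_2=V$, $U_3=\RR$, $X_1=X_2=T$, $X_3=\RR$ (the latter three being trivial as $\spf(V)$-modules) and $p=\langle\cdot\vert\cdot\rangle$, thereby producing a bilinear form $\eta\colon T\times T\to\RR$ for which \eqref{eq_signneutra} holds. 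The skew-symmetry of $\eta$ is immediate from the symmetry of $\kappa$: choosing $a,b\in V$ with $\langle a\vert b\rangle=1$,
\[
\eta(x,y)=\kappa(a\otimes x,b\otimes y)=\kappa(b\otimes y,a\otimes x)=\langle b\vert a\rangle\eta(y,x)=-\eta(y,x).
\]

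For the signature statement, I would first observe that $\g(T)$ is simple, so $\kappa$ is non-degenerate; moreover the grading involution is a $\kappa$-isometry, hence $\g(T)_{\bar0}\perp_\kappa\g(T)_{\bar1}$ and the restriction $\kappa\vert_{\g(T)_{\bar1}}$ is non-degenerate. This forces $\eta$ to be non-degenerate, and in particular $\dim T=2r$ is even. Then I would fix a symplectic basis $\{a,b\}$ of $V$ with $\langle a\vert b\rangle=1$ and a symplectic basis $\{f_1,\ldots,f_r,g_1,\ldots,g_r\}$ of $T$ with $\eta(f_i,g_j)=\delta_{ij}$ and $\eta(f_i,f_j)=0=\eta(g_i,g_j)$. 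The Gram matrix of $\kappa\vert_{\g(T)_{\bar1}}$ then splits, for each index $i$, into two hyperbolic planes $\espan\{a\otimes f_i,b\otimes g_i\}$ and $\espan\{a\otimes g_i,b\otimes f_i\}$ with Gram matrices $\bigl(\begin{smallmatrix}0&1\\1&0\end{smallmatrix}\bigr)$ and $\bigl(\begin{smallmatrix}0&-1\\-1&0\end{smallmatrix}\bigr)$ respectively; each has signature zero, so summing over $i$ yields $\sig(\kappa\vert_{\g(T)_{\bar1}})=0$.

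The only conceptually non-routine step is the recognition that Vinberg's lemma applies with $L=\spf(V)$ alone (no invariance under $\inder(T)$ being necessary); once the factorization \eqref{eq_signneutra} is in hand, the remainder is pure linear algebra on a tensor product of two alternating forms.
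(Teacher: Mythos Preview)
Your proof is correct and follows essentially the same approach as the paper: invoke Vinberg's lemma with $L=\spf(V)$ to factor $\kappa$ through $\langle\cdot\vert\cdot\rangle\otimes\eta$, deduce skew-symmetry of $\eta$ from symmetry of $\kappa$, and then read off signature zero from a symplectic basis. The one mild difference is that the paper takes an extra step to show $\eta$ is $\inder(T)$-invariant and hence a scalar multiple of $(\cdot\vert\cdot)$ (via the one-dimensionality of $\Hom_{\g_{\bar0}}(S^2(\g_{\bar1}),\RR)$ from Lemma~\ref{le:Z2}), whereas you bypass this by arguing directly that $\eta$ is non-degenerate from the non-degeneracy of $\kappa\vert_{\g(T)_{\bar1}}$; your route is slightly more economical for the purpose at hand, while the paper's identification $\eta=\alpha(\cdot\vert\cdot)$ yields a bit more information (namely the explicit constant $\alpha$ in the orthogonal basis it writes down).
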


\begin{proof} 
First note that $\dim_\RR\Hom_{\spf(V)}(V\otimes V,\RR)$ equals $1$, because this
is the case after complexification:
 $\dim_\CC\Hom_{\slf_2(\CC)}( \CC^2\otimes\CC^2,\CC)=
 \dim_\CC\Hom_{\slf_2(\CC)}(\mathfrak{gl}_2(\CC),  \CC)=1$. As 
 $\langle\cdot\vert\cdot\rangle\colon V\times V\to\RR$ is a non-zero 
 $\spf(V)$-invariant map, then Lemma~\ref{le_Vinberg} gives the existence of a bilinear map $\eta\colon T\times T\to\RR$ satisfying
\eqref{eq_signneutra}. But it is easy to check that $\eta$ must be $\inder(T)$-invariant, by using the invariance of $\kappa$. Also, as $\kappa$ is symmetric and 
$\langle\cdot\vert\cdot\rangle$ is skew-symmetric, then $\eta$ is skew-symmetric, and we may think of it as an element in
  $\Hom_{\inder(T)}(\bigwedge^2T,\RR)$.  
  
This space of homomorphisms has dimension $1$  and it is spanned by 
$(\cdot\vert\cdot)$. Indeed,   each $\eta'\in\Hom_{\inder(T)}(\bigwedge^2T,\RR)$ would give $\eta'\otimes \langle\cdot\vert\cdot\rangle$, which would belong to  
$\Hom_{\g\subo}(S^2(\g{\subuno}),\RR)$, and this is one-dimensional, because so is its complexification according to part (3) in Lemma~\ref{le:Z2}.

Then there is a scalar $\alpha\in\RR$ such that 
$\kappa(a\otimes x,b\otimes y)
=\alpha\langle a\vert b\rangle (x\vert y)$.
Now, if $\{x_i,y_i:i=1,\dots,n\}$ is a symplectic basis of $\bigl(T,(\cdot\vert \cdot)\bigr)$, that is, 
$(x_i\vert y_i)=1=-(y_i\vert x_i)$, and $(x_i\vert y_j)=0=(y_j\vert x_i)$ for $i\neq j$, and if 
$\{e_1,e_2\}$ is a symplectic basis of $V$, then the family
$$
\{e_1\otimes x_i+e_2\otimes y_i,e_1\otimes y_i-e_2\otimes x_i,
e_1\otimes x_i-e_2\otimes y_i,e_1\otimes y_i+e_2\otimes x_i:i=1,\dots,n\}
$$ 
is a $\kappa$-orthogonal basis of $\g(T)_{\bar1}=V\otimes T$ such that the `length'
$\kappa(z,z)$ of any of the first $2n$ elements is $2\alpha$ and the `length' of any of the last $2n$ elements is $-2\alpha$. Therefore, the signature of 
$\kappa\vert_{\g(T)_{\bar1}}$ is $0$.
 \end{proof}
 
 A classical result will be useful for us too.
 
 \begin{lemma}
 If $L\subset\mathfrak{gl}(U)$ is a complex simple Lie algebra and $\kappa\colon L\times L\to\CC$ denotes its Killing form, then there is a scalar 
 $\alpha\in\mathbb Q$, $\alpha>0$,
 such that $\kappa(f,g)=\alpha\tr(fg)$ for all $f,g\in L$.
 \end{lemma}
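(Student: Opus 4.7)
My plan is to identify the trace form $\tau(f,g)\bydef \tr(fg)$ on $L$ as a second invariant symmetric bilinear form, compare it with $\kappa$ via a uniqueness argument, and then evaluate the ratio on a well-chosen element to pin down its positivity and rationality.

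First I would check that both $\kappa$ and $\tau$ are symmetric $L$-invariant (associative) bilinear forms on $L$. For $\tau$ this is the elementary identity $\tr([h,f]g)+\tr(f[h,g])=0$, which follows from the cyclic invariance of the trace. Next, the radical of any invariant symmetric bilinear form is an ideal of $L$; since $L$ is simple, any non-zero such form is non-degenerate. The Killing form $\kappa$ is non-zero by Cartan's criterion (equivalently, by simplicity), and the same applies to $\tau$: were it identically zero then, in particular, $\tr(\ad\text{-trace form})$ would vanish on $[L,L]=L$, forcing $L$ to be solvable by Cartan's criterion, a contradiction. Hence both forms are non-degenerate.

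Now I invoke the standard uniqueness: the space $\Hom_L(S^2L,\CC)$ of $L$-invariant symmetric bilinear forms on $L$ is one-dimensional. Indeed, such a form induces an $L$-equivariant map $L\to L^*$ whose space is, by Schur's lemma applied to the irreducible adjoint module, at most one-dimensional; and we have just exhibited a non-zero element. Consequently there exists a (necessarily non-zero) scalar $\alpha\in\CC$ such that
\[
\kappa(f,g)=\alpha\,\tr(fg)\qquad \text{for all } f,g\in L.
\]

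It remains to show that $\alpha$ is a positive rational. Fix a Cartan subalgebra $\frh\subset L$, a root $\beta$, and let $h_\beta\in\frh$ be the corresponding coroot (so $\beta(h_\beta)=2$ and every root takes integer values on $h_\beta$). Evaluating, on one hand,
\[
\kappa(h_\beta,h_\beta)=\sum_{\gamma\in\Phi}\gamma(h_\beta)^2
\]
is a sum of squares of integers, strictly positive because $\beta(h_\beta)^2=4$. On the other hand, since $L$ acts faithfully on $U$, the element $h_\beta$ is semisimple with integer eigenvalues on $U$ (the weights of $U$ take integer values on coroots), so
\[
\tr(h_\beta^2)=\sum_{\mu}(\dim U_\mu)\,\mu(h_\beta)^2
\]
is a non-negative integer; and it is strictly positive since $h_\beta\neq 0$ acts non-trivially on the faithful module $U$. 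The ratio
\[
\alpha=\frac{\kappa(h_\beta,h_\beta)}{\tr(h_\beta^2)}
\]
therefore lies in $\QQ_{>0}$, completing the proof. The only delicate step I foresee is the need to justify positivity and rationality simultaneously; choosing a coroot rather than an arbitrary semisimple element is what makes both numerator and denominator visibly sums of squares of integers.
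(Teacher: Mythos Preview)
Your proof is correct and essentially identical to the paper's: both establish that invariant symmetric bilinear forms on $L$ form a one-dimensional space (via Schur's lemma and the self-duality of the adjoint module), and then evaluate the ratio $\kappa/\tau$ on the semisimple element $h$ of an $\frsl_2$-triple---your coroot $h_\beta$ is exactly such an element---where both numerator and denominator are visibly positive integers. The only difference is cosmetic: you phrase the integrality via roots and weights, while the paper phrases it via the decomposition of $L$ and $U$ into irreducible $\frsl_2$-modules.
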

 
 \begin{proof}
 The vector space  $\Hom_L(S^2(L),\CC)$ is contained in 
 $\Hom_L(L\otimes_\CC L,\CC)$, which is linearly isomorphic to $\Hom_L(L,L^*)$, and hence to $\Hom_L(L,L)=\CC\id_L$, because the adjoint module $L$ is  self-dual  
 ($\kappa $ is non-degenerate).  We conclude that $\Hom_L(S^2(L),\CC)$ has dimension one and it is spanned by the Killing form $\kappa$.
 
 The symmetric bilinear form 
 $\bup\colon L\times L\to\CC$, $\bup(f,g)=\tr(fg)$,
 is $L$-invariant, so there is a scalar $\alpha\in\CC$ such that 
 $\kappa(f,g)=\alpha\tr(fg)$ for all $f,g\in L$. 
 Take now a subalgebra of $L$ isomorphic to $\slf_2(\CC)$ and let $\{h,e,f\}$
 be  a standard basis.  As $L$ is a completely reducible module, it is a sum of 
 $\slf_2(\CC)$-modules of type $V(n \varpi_1)$ so that $\ad h$ acts diagonally on 
 $L$ and all its eigenvalues are integer numbers. This means that $\kappa(h,h)=\tr(\ad^2h)$ is in $\ZZ_{\ge0}$, in fact it is strictly positive.
In the same vein, $U$ is completely reducible as a module for this subalgebra
isomorphic to  $\slf_2(\CC)$, and all the eigenvalues of 
$h\,\bigl(\in\mathfrak{gl}(U)\bigr)$ are integers, so we get  
$\bup(h,h)\in\ZZ_{\ge0}$ too. From $\kappa(h,h)=\alpha \bup(h,h)$ we obtain that the scalar $\alpha$ is a (positive) rational number.
 \end{proof}

 \begin{proposition}\label{pr_signat}
 Let $\bigl(T,[\cdot,\cdot,\cdot],(\cdot\vert\cdot )\bigr)$ be  a real simple symplectic triple system. Write
  $\g=\g(T)$ and $\frh=\inder(T)$. Then, if $\frh$ is simple, the difference between the signatures of
  $\frg$ and $\frh$ is $1$: $\sig(\g)-\sig(\frh )=1$.
\end{proposition}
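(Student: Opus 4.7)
The plan is to compute $\sig(\g)$ by splitting the Killing form $\kappa$ of $\g=\g(T)$ using both the $\ZZ/2$-grading and the ideal decomposition of $\g\subo$, comparing signatures piece by piece. Since $\g\subo$ and $\g\subuno$ are the eigenspaces of the involutive automorphism that defines the grading, they are $\kappa$-orthogonal, and Lemma~\ref{co_sigparteimpar} already gives $\sig(\kappa\vert_{\g\subuno})=0$. Hence $\sig(\g)=\sig(\kappa\vert_{\g\subo})$.

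The next step is to refine $\g\subo=\spf(V)\oplus\frh$. These two summands should be $\kappa$-orthogonal by a short trace computation: for $\xi\in\spf(V)$ and $\eta\in\frh$, $\ad_\g\xi\,\ad_\g\eta$ annihilates both $\spf(V)$ and $\frh$, and acts on $V\otimes T$ as $\xi\vert_V\otimes\eta\vert_T$, whose trace $\trace_V(\xi)\,\trace_T(\eta)$ is zero because $\spf(V)\subset\slf(V)$. For $\xi,\xi'\in\spf(V)$ the same kind of computation will give $\kappa(\xi,\xi')=\kappa_{\spf(V)}(\xi,\xi')+\dim(T)\,\trace_V(\xi\xi')$, a positive scalar multiple of $\trace_V(\cdot\,\cdot)$ (the first summand by the previous lemma applied to $\spf(V)_\CC\cong\slf_2(\CC)$, the second obviously). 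Thus $\kappa\vert_{\spf(V)}$ is a positive multiple of $\kappa_{\spf(V)}$, and a direct computation on a standard $\{h,e,f\}$-basis shows $\sig(\kappa_{\spf(V)})=1$ since $\spf(V)\cong\slf_2(\RR)$ is a split real form of rank one. So $\sig(\kappa\vert_{\spf(V)})=1$, and everything reduces to proving $\sig(\kappa\vert_\frh)=\sig(\frh)$.

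For this last step, the same trace computation, using $V\otimes T\cong T\oplus T$ as an $\frh$-module, will give $\kappa(\eta,\eta')=\kappa_\frh(\eta,\eta')+2\,\trace_T(\eta\eta')$ for $\eta,\eta'\in\frh$. When $\frh_\CC$ is simple, I would invoke the previous lemma applied to $\frh_\CC\subset\gl(T_\CC)$ to obtain a positive rational $\alpha$ with $\kappa_{\frh_\CC}=\alpha\,\trace_{T_\CC}$; restricting to real points then yields $\kappa_\frh=\alpha\,\trace_T$, so $\kappa\vert_\frh=(1+2/\alpha)\kappa_\frh$ has the same signature as $\kappa_\frh$. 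The main obstacle is the remaining case, in which $\frh$ is real simple but $\frh_\CC\cong L\oplus\bar L$ is not simple, i.e.\ when $\frh$ is itself a complex simple Lie algebra viewed as a real one. There I would apply the lemma separately to the two conjugate ideals $L$ and $\bar L$ acting on the corresponding halves of $T_\CC=U\oplus\bar U$; complex conjugation forces both applications to produce the \emph{same} positive rational $\alpha$, and taking real parts under the diagonal embedding $\frh\hookrightarrow\frh_\CC$ again gives $\kappa_\frh=\alpha\,\trace_T$ with $\alpha>0$. Putting everything together, $\sig(\g)-\sig(\frh)=\sig(\kappa\vert_{\spf(V)})+\sig(\kappa\vert_\frh)-\sig(\kappa_\frh)=1$.
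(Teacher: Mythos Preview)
Your proposal is correct and follows essentially the same route as the paper: split $\kappa$ via the $\ZZ/2$-grading (invoking Lemma~\ref{co_sigparteimpar} to kill the odd contribution), then via the ideal decomposition $\g\subo=\spf(V)\oplus\frh$, and compare each restriction to the intrinsic Killing form using the trace lemma immediately preceding the proposition. Your constant $(1+2/\alpha)$ versus the paper's $(1+2\alpha)$ is only a matter of which direction you normalize the relation $\kappa_\frh\leftrightarrow\trace_T$; both are positive, so the signatures match.

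The one place you diverge is in worrying about the case where $\frh$ is real simple but $\frh_\CC$ is not simple. The paper simply writes ``because the same happens after complexification'' and moves on. Your caution is reasonable in principle, but this case is actually vacuous here: from the complex classification, the possible $\inder(T_\CC)$ are $\frgl(W)$, $\frsp(V)\oplus\frso(W)$, $\frsp(W)$, or one of $\fra_1,\frc_3,\fra_5,\frd_6,\fre_7$. A real simple $\frh$ with non-simple complexification would force $\frh_\CC$ to be a sum of two isomorphic simple ideals swapped by conjugation; the only candidate on the list is $\frsp_2\oplus\frso_3\cong\frsl_2\oplus\frsl_2$ in the orthogonal case with $\dim W=3$, and the paper explicitly rules out the corresponding real form $\frsl_2(\CC)_\RR$ in its treatment of the orthogonal type (it admits no suitable six-dimensional representation). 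So you may safely assume $\frh_\CC$ is simple and drop the extra paragraph.
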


\begin{proof}
Denote by $\kappa$ the Killing form of $\g$. Lemma~\ref{co_sigparteimpar} gives 
$\sig(\g)=\sig(\kappa\vert_{\g(T)_{\bar0}})$. Now note that 
$\g(T)_{\bar0}=\spf(V)\oplus\inder(T)$ is an orthogonal sum, so that 
$\sig(\g)=\sig(\kappa\vert_{ \spf(V)})+\sig(\kappa\vert_{ \inder(T)})$.

Let us prove first that $\kappa\vert_{ \spf(V)}$ is a positive multiple of
the Killing form $\kappa_{ \spf(V)}$, which has signature $1$ ($\slf_2(\RR)= \spf(V)$ is the split algebra of type $A_1$).
Indeed, if $\gamma,\gamma'\in\spf(V)$, then $[\gamma,[\gamma',a\otimes t]]=\gamma\gamma'(a)\otimes t$, so that
$$
\kappa(\gamma,\gamma')=\kappa_{\spf(V)}(\gamma,\gamma')+\tr(\gamma\gamma')\dim T.
$$
But $\tr(h^2)=2$, while $\kappa_{\spf(V)}(h,h)=8$, so that we have 
$\tr(\gamma\gamma')=\frac{1}{4}\kappa_{\spf(V)}(\gamma,\gamma')$, and this implies
 $\kappa(\gamma,\gamma')
 =\left(1+\frac{\dim T}{4}\right)\kappa_{\spf(V)}(\gamma,\gamma')$. In particular, we get 
 $\sig(\kappa\vert_{ \spf(V)})=1$. 

For $d,d'\in\inder(T)=\frh $, we get $[d,[d',a\otimes t]]=a\otimes dd'(t)$ and
$$
\kappa(d,d')=\kappa_\frh (d,d')+2\tr(dd').
$$
Our $\frh $ is simple so, as in the  lemma above, we have 
$\alpha\kappa_\frh (d,d')=\tr(dd')$ for a positive $\alpha\in\mathbb Q$, 
because the same happens after complexification. 
It follows that $\kappa(d,d')$ equals $(1+2\alpha)\kappa_\frh (d,d')$ for all $d,d'\in\frh$ and 
that $\sig(\kappa\vert_{ \frh})$ equals $\sig(\frh)$.
\end{proof}

\medskip
 
Let us now  describe some non-split real symplectic triple systems with classical enveloping algebra. Namely, we will consider one family of triple systems  with special envelope and another one with orthogonal envelope.

\subsection{Unitarian type}\label{ss:unitarian}

Let $W$ be a non-zero complex vector space endowed with a 
non-degenerate hermitian form $\hup_W\colon W\times W\to \CC$. Hence $\hup_W$ is linear in the 
first component, conjugate linear in the second, 
$\hup_W(x,y)=\overline{\hup_W(y,x)}$ for all $x,y\in W$, and the kernel
$\{x\in W: \hup_W(x,W)=0\}$ is trivial. Recall that there are always
orthogonal bases relative to $\hup_W$ and, as in Subsection \ref{ss:signature}, the signature of 
$\hup_W$ is defined as the difference between the number of elements $w$ in
any of these basis  with $\hup_W(w,w)>0$ and the number of those with $\hup_W(w,w)<0$.

For any $x,y\in W$, we may consider the real and imaginary parts:
\begin{equation}\label{eq:h_re_im}   
\hup_W(x,y)=\{x\vert y\}+\bi(x\vert y).
\end{equation}
Then $\{\cdot\vert \cdot\}$ is a non-degenerate symmetric real 
bilinear form
on $W$ (considered as a real vector space), and $(\cdot\vert \cdot)$
is a non-degenerate alternating real bilinear form.

The linear operators 
$$
\eta_{a,b}=\hup_W(\cdot,a)b-\hup_W(\cdot,b)a,
$$
 for $a,b\in W$, span the
unitary Lie algebra           
\[
\fru(W,\hup_W)=\{f\in\End_\CC(W): 
\hup_W\bigl(f(a),b\bigr)=-\hup_W\bigl(a,f(b)\bigr)\ \forall a,b\in W\}.
\]
\noindent Moreover,  we have
$\eta_{\alpha a,b}=\eta_{a,\overline{\alpha}b}$ and
$[\sigma,\eta_{a,b}]=\eta_{\sigma(a),b}+\eta_{a,\sigma(b)}$
for all $a,b\in W$, $\alpha\in\CC$ and $\sigma\in\fru(W,\hup_W)$.

Let now $U$ be a two-dimensional vector space over $\CC$ endowed
with a non-degenerate hermitian form $\hup_U\colon U\times U\to \CC$ of
signature $0$. Then there is a basis $\{u,v\}$ of $U$ with 
$\hup_U(u,u)=0=\hup_U(v,v)$ and $\hup_U(u,v)=\bi$. The special unitary Lie algebra
$\frsu(U,\hup_U)$ is $\RR$-spanned  by 
$\eta_{u,\bi u},\eta_{v,\bi v},\eta_{u,\bi v}$ (note that 
$\eta_{u,v}=-\bi\,\id$). It leaves invariant the two-dimensional
vector space $V=\RR u\oplus \RR v$, and it is isomorphic, by restriction
to $V$,  to $\frsl(V)$.
 Indeed, for
 any $a,b\in V$, $\hup_U(a,b)=\bi \langle a\vert b\rangle$, where 
$\langle\cdot\vert\cdot\rangle$ is alternating with $\langle u\vert v\rangle =1$. For $a,b,c\in V$ we have:
\begin{equation}\label{eq:etaygamma}
\begin{split}
\eta_{a,\bi b}(c)&=\hup_U(c,a)\bi b-\hup_U(c,\bi b)a\\
	&=-\langle c\vert a\rangle b-\langle c\vert   b\rangle a\\
	&=\langle a\vert c\rangle b+\langle b\vert c\rangle a
	=\gamma_{a,b}(c).
\end{split}
\end{equation}
This means that we have the isomorphism 
$\frsu(U,\hup_U)\cong\frsp(V,\langle\cdot\vert\cdot\rangle)=\frsl(V)$.
Also, using that $\langle a\vert b\rangle c+\langle b\vert c\rangle a
+\langle c\vert a\rangle b=0$, because $\dim_\RR V=2$, we obtain
\[
\begin{split}
\eta_{a,b}(c)&=\hup_U(c,a)b-\hup_U(c,b)a\\
	&=\bi \langle c\vert a\rangle b-\bi\langle c\vert b\rangle a
	=-\bi\langle a\vert b\rangle c,
\end{split}
\]
so we get 
\begin{equation}\label{eq:eta_ab}
\eta_{a,b}=-\langle a\vert b\rangle L_\bi,
\end{equation} 
where $L_\bi$ denotes the scalar multiplication by the imaginary unit 
$\bi$.

\smallskip

The direct sum $U\oplus W$ is endowed with the non-degenerate
hermitian form given by the `orthogonal sum' $\hup=\hup_U\perp \hup_W$ of $\hup_U$ and $\hup_W$. 
 It is $\ZZ/2$-graded with even part $U$ and odd part $W$, which induces a $\ZZ/2$-grading on the endomorphism algebra and hence a $\ZZ/2$-grading on the  corresponding special unitary Lie algebra, where the even part consists of the endomorphisms preserving $U$ and $W$, and the odd part  consists of the endomorphisms interchanging $U$ with $W$. In this way, such
  special unitary Lie algebra 
decomposes, with the natural embeddings, as follows:
\begin{equation}\label{eq:uUW}
\frsu(U\perp W,\hup)=\bigl(\frsu(U,\hup_U)\oplus\frsu(W,\hup_W)\oplus \RR J\bigr)
\oplus \eta_{U,W},
\end{equation}
where $J$ is the trace zero endomorphism determined by:
\[
J(x)=\begin{cases} 
	(\dim_\CC W)\bi x&\text{for $x\in U$,}\\
	-2\bi x&\text{for $x\in W$.}
\end{cases}
\]
It must be remarked here that the space $\eta_{U,W}$ is the same as
$\eta_{V,W}$ and this is isomorphic to $V\otimes_\RR W$
($v\otimes x\mapsto \eta_{v,x}$). Also, $\frsu(W,\hup_W)\oplus\RR J$ is isomorphic to the unitary Lie algebra $\fru(W,\hup_W)$, as this is the direct sum of $\frsu(W,\hup_W)$ and a one-dimensional center. Thus we can write,
using the natural identifications:
\begin{equation}\label{eq:uUW2}
\frsu(U\perp W,\hup)=\bigl(\frsp(V,\langle\cdot\vert\cdot\rangle)\oplus
 \fru(W,\hup_W)\bigr)\oplus \bigl(V\otimes_\RR W\bigr),
\end{equation}
where  the product of the even part with the odd one is given by the natural action.

For any $a,b\in V$ and $x,y\in W$, easy computations give:
\[
\begin{split}
[\eta_{a,x},\eta_{b,y}]
	&=\eta_{\eta_{a,x}(b),y}+\eta_{b,\eta_{a,x}(y)}\\
	&=\eta_{\hup_U(b,a)x,y}+\eta_{\hup_W(y,x)a,b}\\
	&=\Bigl(\langle a\vert b\rangle \eta_{x,\bi y}
		+\{x\vert y\}\eta_{a,b}\Bigr)
		+(x\vert y)\eta_{a,\bi b}\\
	&=\langle a\vert b\rangle\Bigl(\eta_{x,\bi y}
	   -\{x\vert y\}L_\bi\vert_U\Bigr)+(x\vert y)\gamma_{a,b},
\end{split}
\]
where we have used  \eqref{eq:h_re_im}, \eqref{eq:etaygamma}, and  \eqref{eq:eta_ab}, and where $L_\bi\vert_U$
denotes the endomorphism that is trivial on $W$ and equals the 
scalar multiplication by $\bi$ on $U$.

Note that the trace of $\eta_{x,\bi y}$ is obtained as follows:
\[
\trace(\eta_{x,\bi y})=\hup_W(\bi y,x)-\hup_W(x,\bi y)
=\bi\bigl(\hup_W(y,x)+\hup_W(x,y)\bigr)
=2\{x\vert y\}\bi,
\]
and hence $\eta_{x,\bi y}-\{x\vert y\}L_\bi\vert_U$ is traceless so belonging to 
$\frsu(W,\hup_W)\oplus \RR J$. 
Moreover, for $c\in V$ and $z\in W$, we have:
\[
\begin{split}
[\eta_{x,\bi y}-\{x\vert y\}L_\bi\vert_U,\eta_{c,z}]
	&=\eta_{c,\eta_{x,\bi y}(z)}-\{x\vert y\}\eta_{\bi c,z}\\
	&=\eta_{c,\eta_{x,\bi y}(z)+\bi\{x\vert y\}z}\\
	&=\eta_{c,\hup_W(z,x)\bi y-\hup_W(z,\bi y)x+\bi\{x\vert y\}z}
		=\eta_{c,[x,y,z]},
\end{split}
\]
for the triple product on $W$ defined by 
\begin{equation}\label{eq_triplecasounitario}
[x,y,z]=\bi\bigl(\hup(z,x)y+\hup(z,y)x+\{x\vert y\}z\bigr).
\end{equation}
Thus we have checked that  
$[\proj_{\frsu(W,\hup_W)\oplus \RR J}([\eta_{a,x},\eta_{b,y}]),\eta_{c,z}]=\eta_{\langle a\vert b\rangle c,[x,y,z]}$. (Here we denote by $\proj_{\frsu(W,\hup_W)\oplus \RR J}$ the projection onto $\frsu(W,\hup_W)\oplus \RR J$ according to the decomposition in \eqref{eq:uUW}.)

Then, Equations  \eqref{eq_sympproducto} and \eqref{eq:uUW2} gives at once that 
$\bigl( T=W,[\cdot,\cdot,\cdot],(\cdot\vert\cdot)\bigr)$ is a simple symplectic triple system, with inner derivation algebra isomorphic to
$\fru(W,\hup_W)$, and standard enveloping algebra isomorphic to 
$\frsu(U\perp W,\hup)$.

These symplectic triple systems will be said to be of \emph{unitarian type}.
The signatures of $\hup_W$ and $\hup$ coincide, because the signature of $\hup_U$ is $0$.
Hence we get the isomorphism
$(\g(T),\inder(T))\cong ( \mathfrak{su}_{p+1,n+1-p},\mathfrak{u}_{p,n-p}) $ for $n=\dim_\CC W$ and $p\geq \frac{n}{2}$.

\medskip

\subsection{Quaternionic type}\label{ss:quaternionic}\quad

Let here $U$ be a two-dimensional right $\HH$-module, where 
$\HH=\RR\oplus\RR\bi\oplus\RR\bj\oplus\RR\bk$ is the real division
algebra of quaternions, endowed with a skew-hermitian non-degenerate
form $\hup_U\colon U\times U\to \HH$. That is, $\hup_U$ is $\RR$-bilinear,
$\hup_U(a,bq)=\hup_U(a,b)q$,  $\hup_U(aq,b)=\overline{q} \hup_U(a,b)$, and 
$\hup_U(a,b)=-\overline{\hup_U(b,a)}$, for all $a,b\in U$
and $q\in \HH$, where $q\mapsto \overline{q}$ is the canonical 
conjugation in $\HH$. Note that non-degenerate skew-hermitian forms
are unique up to `isometry'.   
Denote by $\HH_0$ the subspace
of zero trace   quaternions: $\HH_0=\RR\bi\oplus\RR\bj\oplus\RR\bk$.

Pick a basis $\{u,v\}$ of $U$ with $\hup_U(u,u)=0=\hup_U(v,v)$, $\hup_U(u,v)=1$ and,
as in the unitarian case, consider the two-dimensional vector space
$V=\RR u\oplus\RR v$. The restriction of $\hup_U$ to $V$ is a non-zero
alternating bilinear form $\langle\cdot\vert\cdot\rangle$.

Denote by $\frso^*(U,\hup_U)$ the Lie algebra of
skew-symmetric endomorphisms relative to $\hup_U$:
\[
\frso^*(U,\hup_U)=\{f\in\End_\HH(U): 
\hup_U\bigl(f(a),b\bigr)=-\hup_U\bigl(a,f(b)\bigr)\ \forall a,b\in U\}.
\]
Note that   $\frso^*(U,\hup_U)$ is the ($\RR$-)linear span of the operators
\begin{equation}\label{eq_sigmas}
\sigma_{a,b}=a\,\hup_U(b,\cdot)+b\,\hup_U(a,\cdot),
\end{equation}
 for $a,b\in U$.

\begin{lemma}\label{le:so*H}
The Lie algebra $\frso^*(U,\hup_U)$
 is the direct sum of two three-dimensional simple ideals:
\[
\frso^*(U,\hup_U)=\sigma_{V,V}\oplus\sigma_{u,v\HH_0}.
\]
Moreover, $\sigma_{V,V}$ is isomorphic to the split simple Lie algebra 
$\frsl_2(\RR)$, while $\sigma_{u,v\HH_0}$ is isomorphic to the 
compact Lie algebra $\frsu_2$.
\end{lemma}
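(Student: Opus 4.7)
My approach is to make both summands concrete by computing matrix representations in the $\HH$-basis $\{u,v\}$ of $U=u\HH\oplus v\HH$, recognize each as a three-dimensional Lie subalgebra, check that they commute and intersect trivially, and then match the total six real dimensions with $\dim_\RR\frso^*(U,\hup_U)$. First, I would compute $\sigma_{a,b}$ for $a,b\in V$: since $\hup_U$ restricted to $V$ takes only real values ($\hup_U(u,v)=1=-\hup_U(v,u)$ and $\hup_U(u,u)=\hup_U(v,v)=0$), one obtains $\sigma_{u,u}(u)=0$, $\sigma_{u,u}(v)=2u$; $\sigma_{v,v}(u)=-2v$, $\sigma_{v,v}(v)=0$; and $\sigma_{u,v}(u)=-u$, $\sigma_{u,v}(v)=v$. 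These are $\HH$-linear endomorphisms of $U$ with real matrices in $\{u,v\}$, linearly independent, and span a subalgebra isomorphic to $\frsl_2(\RR)$ acting on $V\subset U$ via its standard representation.

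Next, for $q\in\HH_0$ (so $\bar{q}=-q$), the skew-hermitian identity $\hup_U(vq,x)=\bar{q}\hup_U(v,x)=-q\hup_U(v,x)$ yields $\sigma_{u,vq}(u)=uq$ and $\sigma_{u,vq}(v)=vq$; hence $\sigma_{u,vq}$ has matrix $qI_2$ in the basis $\{u,v\}$. Therefore $\sigma_{u,v\HH_0}$ is a three-dimensional subspace of quaternionic scalar-matrix operators, with bracket $[\sigma_{u,vq},\sigma_{u,vq'}]=\sigma_{u,v[q,q']}$. The commutator on $\HH_0$ satisfies $[\bi,\bj]=2\bk$, $[\bj,\bk]=2\bi$, $[\bk,\bi]=2\bj$, giving the compact simple three-dimensional Lie algebra, so $\sigma_{u,v\HH_0}\cong\frsu_2$.

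Real matrices and quaternionic scalar matrices commute, so $[\sigma_{V,V},\sigma_{u,v\HH_0}]=0$; and $\sigma_{V,V}\cap\sigma_{u,v\HH_0}=0$ since the only real quaternion lying in $\HH_0$ is zero. Hence $\sigma_{V,V}\oplus\sigma_{u,v\HH_0}$ is a six-dimensional Lie subalgebra of $\frso^*(U,\hup_U)$; since $\dim_\RR\frso^*(U,\hup_U)=6$ this exhausts the whole algebra, making both summands ideals, and each is simple as a three-dimensional non-abelian Lie algebra. The main potential obstacle is the last dimension identity: I would verify it either by quoting $\dim_\RR\frso^*(2n)=n(2n-1)$ and specializing to $n=2$, or by proving directly that every $\sigma_{a,b}$ with $a,b\in U$ already lies in the proposed direct sum, expanding $a,b$ in the $\HH$-basis $\{u,v\}$ and exploiting the $\RR$-bilinearity and symmetry of $(a,b)\mapsto\sigma_{a,b}$.
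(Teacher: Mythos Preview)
Your proof is correct and takes essentially the same route as the paper: both compute $\sigma_{u,vq}(u)=uq$, $\sigma_{u,vq}(v)=vq$ explicitly, identify $\sigma_{V,V}$ with $\frsl_2(\RR)$ via its action on $V$, and verify the bracket $[\sigma_{u,vq},\sigma_{u,vq'}]=\sigma_{u,v[q,q']}$. The only cosmetic difference is that the paper establishes $\frso^*(U,\hup_U)=\sigma_{V,V}+\sigma_{u,v\HH_0}$ by first proving the identity $\sigma_{a,aq}=0$ (hence $\sigma_{a,bq}=\langle a\vert b\rangle\sigma_{u,vq}$ for $a,b\in V$, $q\in\HH_0$) and then reducing $\sigma_{U,U}=\sigma_{V\HH,V\HH}$ to $\sigma_{V,V}+\sigma_{u,v\HH_0}$ directly, whereas you invoke the dimension formula $\dim_\RR\frso^*_{4}=6$; you already note this alternative yourself.
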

\begin{proof}
For any $f\in\frso^*(U,\hup_U)$, $a,b\in U$ and $q\in\HH$, we have 
$[f,\sigma_{a,b}]=\sigma_{f(a),b}+\sigma_{a,f(b)}$ and 
$\sigma_{aq,b}=\sigma_{a,b\overline{q}}$. 
Also, for $a\in U$ and
$q\in \HH_0$:
\[
\sigma_{a,aq}=\begin{cases}
	\sigma_{a\overline{q},a}=-\sigma_{aq,a}&\text{as $\overline{q}=-q$,}\\
	\sigma_{aq,a}&\text{as $\sigma$ is symmetric on its arguments.}
\end{cases}
\] 
Hence $\sigma_{a,aq}=0$. 
From here it is easy to obtain  
\begin{equation}\label{eq:sigma_abq}
\sigma_{a,bq}=\langle a\vert b\rangle\sigma_{u,vq}
\end{equation}
for any $a,b\in V$ and $q\in\HH_0$. We then get
\[
\frso^*(U,\hup_U)=\sigma_{U,U}=\sigma_{V,V\HH}=
\sigma_{V,V}+\sigma_{V,V\HH_0}=\sigma_{V,V}+\sigma_{u,v\HH_0}.
\]

For $a,b\in V$, we have $\sigma_{a,b}\vert_V=\gamma_{a,b}\in\frsp\bigl(V,\langle\cdot\vert\cdot\rangle\bigr)$, and hence $\sigma_{V,V}$ is 
a Lie subalgebra isomorphic to 
$\frsp\bigl(V,\langle\cdot\vert\cdot\rangle\bigr)\cong\frsl_2(\RR)$.

For $a,b\in V$ and $q,q'\in\HH_0$ we compute:
\begin{equation}\label{eq:sigmas}
\begin{split}
[\sigma_{a,b},\sigma_{u,vq}]&=
	\sigma_{\sigma_{a,b}(u),vq}+\sigma_{u,\sigma_{a,b}(v)q}\\
	&=\Bigl(\langle\sigma_{a,b}(u)\vert v\rangle +
	  \langle u\vert\sigma_{a,b}(v)\rangle\Bigr)\sigma_{u,vq}=0,
					\\[4pt]
\sigma_{u,vq}(u)&=u\hup_U(vq,u)=-u\overline{q}=uq,\\
\sigma_{u,vq}(v)&=vq\hup_U(u,v)=vq,\\[4pt]
[\sigma_{u,vq},\sigma_{u,vq'}]&=
	\sigma_{\sigma_{u,vq}(u),vq'}+\sigma_{u,\sigma_{u,vq}(v)q'}\\
	&=\sigma_{uq,vq'}+\sigma_{u,vqq'}=\sigma_{u,v[q,q']}.
\end{split}
\end{equation}
This shows that $\sigma_{V,V}$ and $\sigma_{u,v\HH_0}$ are two
ideals of $\frso^*(U,\hup_U)$, and that $\sigma_{u,v\HH_0}$ is 
isomorphic to the compact Lie algebra $\HH_0$ (with the usual bracket).
\end{proof}

Let now $W$ be a non-zero right $\HH$-module endowed with a 
non-degenerate skew-hermitian form $\hup_W\colon W\times W\to\HH$. As for
the unitarian type, the direct sum $U\oplus W$ is endowed with the
non-degenerate skew-hermitian form $\hup=\hup_U\perp \hup_W$ and there
appears a natural $\ZZ/2$-grading on the Lie algebra 
$\frso^*(U\perp W,\hup)$,  where the endomorphisms in the even part preserve the subspaces $U$ and $W$, and those in the odd part swap them: 
\begin{equation}\label{eq:Z2_quaternionic}
\begin{split}
\frso^*(U\perp W,\hup)
 &=\bigl(\frso^*(U,\hup_U)\oplus\frso^*(W,\hup_W)\bigr)\oplus\sigma_{U,W}\\
 &=\bigl(\sigma_{V,V}\oplus\bigl(\sigma_{u,v\HH_0}
   \oplus\sigma_{W,W}\bigr)\bigr)    \oplus\sigma_{V,W}.
\end{split}
\end{equation}
  (The notation used  here for $\sigma_{x,y}$ when $x,y\in W$, extends  the one in \eqref{eq_sigmas}.)
Besides, $\sigma_{U,W}=\sigma_{V,W}$ is isomorphic to 
$V\otimes_\RR W$ by means of $a\otimes x\mapsto \sigma_{a,x}$.

For any $x,y\in W$, consider the real and imaginary parts: 
\begin{equation}\label{eq:formacaso_quaternionic}
\hup_W(x,y)=(x\vert y)+\{x\vert y\}, 
\end{equation}
with $(x\vert y)\in\RR$ and $\{x\vert y\}\in\HH_0$. Then 
$(\cdot\vert\cdot)$ is a non-degenerate   alternating (real)  bilinear form on 
$W$, while $\{\cdot\vert\cdot\}\colon W\times W\to\HH_0$ is a symmetric (real) bilinear map.

For $a,b\in V$ and $x,y\in W$ we compute, using \eqref{eq:sigma_abq}:
\[
\begin{split}
[\sigma_{a,x},\sigma_{b,y}]&=
	\sigma_{\sigma_{a,x}(b),y}+\sigma_{b,\sigma_{a,x}(y)}\\
	&=\sigma_{\langle a\vert b\rangle x,y}+\sigma_{b,a\hup_W(x,y)}\\
	&=\langle a\vert b\rangle\sigma_{x,y}+(x\vert y)\sigma_{a,b}
		+\sigma_{b,a\{x\vert y\}}\\
	&=\langle a\vert b\rangle\bigl(\sigma_{x,y}
	-\sigma_{u,v\{x\vert y\}}\bigr)+(x\vert y)\sigma_{a,b}.
\end{split}
\]
Moreover, for $a\in V$ and $q\in\HH_0$, \eqref{eq:sigmas} gives
$\sigma_{u,vq}(a)=aq$, and hence    
\[
[\sigma_{u,vq},\sigma_{a,z}]=\sigma_{\sigma_{u,vq}(a),z}
=\sigma_{aq,z}=-\sigma_{a,zq},
\]
for any $z\in W$. It follows, for $x,y,z\in W$ and $a\in V$, that:
\[
[\sigma_{x,y}-\sigma_{u,v\{x\vert y\}},\sigma_{a,z}]
 =\sigma_{a,\sigma_{x,y}(z)+z\{x\vert y\}}=\sigma_{a,[x,y,z]},
\]
 for the triple product in $W$ defined by 
\begin{equation}\label{eq:triplecaso_quaternionic}
[x,y,z]=\sigma_{x,y}(z)+z\{x\vert y\}.
\end{equation}

Then Equation \eqref{eq:Z2_quaternionic} gives at once that
$\bigl(W,[\cdot,\cdot,\cdot],(\cdot\vert\cdot)\bigr)$ is a simple 
symplectic triple system, with inner derivation algebra isomorphic to
$\frsu_2\oplus\frso^*(W,\hup_W)$, and standard enveloping algebra isomorphic to 
$\frso^*(U\perp W,\hup)$.

These symplectic triple systems will be said to be of \emph{quaternionic type}.

\bigskip

Let us proceed now with examples with exceptional envelopes. Our constructions are much in the spirit of \cite{Adams} and provide
models of some of the exceptional simple real Lie algebras.

\subsection{Non-split $E_6$ types}\label{ss:E6nonsplit}\quad

As in  Subsection~\ref{ss:E6}, let $W$ be a six-dimensional vector space,
now over $\CC$ and endowed with a non-degenerate hermitian form
$\hup\colon W\times W\rightarrow \CC$. Changing $\hup$ by $-\hup$ if necessary, we will assume that the signature  of $\hup$ is $\geq 0$. Fix then an orthogonal basis $\{e_i: 1\leq i\leq 6\}$ of $W$, with $\hup(e_i,e_i)=1$ for 
$1\leq i\leq p$, and $\hup(e_i,e_i)=-1$ for $p+1\leq i\leq 6$, with $p\geq 3$. 
Use this basis to define the determinant map $\det\colon \bigwedge^6W\rightarrow \CC$ given by $\det(e_{123456})=1$, and consider the alternating 
bilinear form $(\cdot\vert\cdot)$ on $\bigwedge^3W$ given by \eqref{eq:E6_alternating}: $(x\vert y)=\det(x\wedge y)$. Consider
too the dual basis $\{e^i: 1\leq i\leq 6\}$ in $W^*$.

     Analogously to \eqref{eq:E8Phi}, this induces a $\frsl(W)$-invariant linear
isomorphism $\bigwedge^3W\rightarrow \bigl(\bigwedge^3W\bigr)^*$,
$x\mapsto (x\vert\cdot)$. As $\bigwedge^3 W^*$ is canonically
isomorphic, similarly to 
\eqref{eq:exterior_dual}, to $\bigl(\bigwedge^3W\bigr)^*$, we obtain
the following $\frsl(W)$-invariant linear isomorphism
\[
\begin{split}
\Phi_3\colon \textstyle{\bigwedge^3W}&\longrightarrow 
\textstyle{\bigwedge^3W^*}\\
  e_{\sigma(1)\sigma(2)\sigma(3)}&\mapsto
 (-1)^\sigma e^{\sigma(4)\sigma(5)\sigma(6)}
\end{split}
\]
for any permutation $\sigma$ of $\{1,\ldots,6\}$, where $(-1)^\sigma$
denotes the signature of $\sigma$.

On the other hand, consider the conjugate linear $\frsu(W,\hup)$-invariant
map $W\to W^*$: $u\mapsto \hup(\cdot,u)$, which induces the following 
 conjugate linear $\frsu(W,\hup)$-invariant map:
\[
\begin{split}
\Psi\colon \textstyle{\bigwedge^3W}&\longrightarrow 
\textstyle{\bigwedge^3W^*}\\
  u_1\wedge u_2\wedge u_3&\mapsto
 \hup(\cdot,u_1)\wedge \hup(\cdot,u_2)\wedge \hup(\cdot,u_3).
\end{split}
\]
For any $1\leq i<j<k\leq 6$ one has
\[
\Psi(e_{ijk})=(-1)^{\lvert \{i,j,k\}{\az\cap}\{p+1,\ldots,6\}\rvert}e^{ijk}.
\]

The composition $\Gamma=\Phi_3^{-1}\Psi\colon \bigwedge^3W
\to\bigwedge^3W$ is bijective, conjugate linear, and 
$\frsu(W,\hup)$-invariant. For any permutation $\sigma$, we get
\[
\Gamma(e_{\sigma(1)\sigma(2)\sigma(3)})=
-(-1)^\sigma(-1)^{\lvert \{\sigma(1),\sigma(2),\sigma(3)\}
\cap\{p+1,\ldots,6\}\rvert}e_{\sigma(4)\sigma(5)\sigma(6)}.
\]
Then $\Gamma^2$ is $\CC$-linear and $\frsu(W,\hup)$-invariant, so it is
$\frsl(W)$-invariant. Schur's Lemma shows then that $\Gamma^2$ is a scalar multiple of the identity. But we have:
\[
\begin{split}
\Gamma(e_{123})&=-e_{456},\\
\Gamma(e_{456})
	&=(-1)^{\lvert\{4,5,6\}\cap\{p+1,\ldots,6\}\rvert}e_{123},
\end{split}
\]
which gives:
\[
\Gamma^2=\begin{cases}
\id&\text{if $p=3$ or $5$,}\\
-\id&\text{if $p=4$ or $6$.}
\end{cases}
\]

For the rest of the subsection, we assume $\Gamma^2=\id$, that is,
$p=3$ or $5$.

As $\Gamma^2=\id$, the subspace of fixed elements, 
$\bigl(\bigwedge^3W\bigr)^\Gamma$, is an irreducible 
$\frsu(W,\hup)$-module, and 
$\bigwedge^3W=\bigl(\bigwedge^3W\bigr)^\Gamma\oplus\bi
\bigl(\bigwedge^3W\bigr)^\Gamma$. 
As   
$$
\textstyle{\dim_\CC\Hom_{\frsl(W)}\bigl(\bigwedge^3W\otimes_\CC\bigwedge^3W,\CC\bigr)=1,}
$$
this complex vector space is spanned by the map $x\otimes y\mapsto (x\vert y)$. Hence   the (real) dimension of 
$\Hom_{\frsu(W,\hup)}\bigl(\bigl(\bigwedge^3W\bigr)^\Gamma
\otimes_\RR \bigl(\bigwedge^3W\bigr)^\Gamma,\RR\bigr)$ is also $1$ and
there is a complex number $\alpha$ such that $(x\vert y)\in \RR\alpha$
for any $x,y\in \bigl(\bigwedge^3W\bigr)^\Gamma$. 
The elements $e_{123}-e_{456}$ and $\bi(e_{123}+e_{456})$ are in
$\bigl(\bigwedge^3W\bigr)^\Gamma$, and
\[
\bigl(e_{123}-e_{456}\vert\bi(e_{123}+e_{456})\bigr)
  =2\bi.
\]
We conclude that the restriction of $(\cdot\vert\cdot)$ to
$\bigl(\bigwedge^3W\bigr)^\Gamma$ takes values in $\RR\bi$. In 
other words, the map $x\otimes y\mapsto \bi(x\vert y)$ spans
$\Hom_{\frsu(W,\hup)}\bigl(\bigl(\bigwedge^3W\bigr)^\Gamma
\otimes_\RR \bigl(\bigwedge^3W\bigr)^\Gamma,\RR\bigr)$.

Also, we know from Subsection \ref{ss:E6} that the complex vector space
$\Hom_{\frsl(W)}\bigl(\bigwedge^3W\otimes_\CC\bigwedge^3W,\frsl(W)\bigr)$ is one-dimensional and spanned by the map 
$x\otimes y\mapsto d_{x,y}$ in \eqref{eq:E6dxy}. 
Therefore, the real vector
space $\Hom_{\frsu(W,\hup)}\bigl(\bigl(\bigwedge^3W\bigr)^\Gamma
\otimes_\RR \bigl(\bigwedge^3W\bigr)^\Gamma, \frsu(W,\hup)\bigr)$ is
one-dimensional too. We conclude that there is a complex number 
$\beta$ such that the $d_{x,y}\in\beta\frsu(W,\hup)$ for all 
$x,y\in\bigl(\bigwedge^3W\bigr)^\Gamma$. Equation~\eqref{eq_dos} then shows that $d_{x,y}\in\bi\,\frsu(W,\hup)$ for any 
$x,y\in \bigl(\bigwedge^3W\bigr)^\Gamma$.

Therefore, $\Bigl(\bigl(\bigwedge^3W\bigr)^\Gamma,\bi[\cdot,\cdot,\cdot],
\bi(\cdot\vert\cdot)\Bigr)$ is a real simple symplectic triple system, with inner derivation algebra isomorphic to $\frsu(W,\hup)$.

In conclusion, we have constructed real simple symplectic triple systems with inner derivation algebras isomorphic to $\frsu_{3,3}$ and
$\frsu_{5,1}$,  depending on the signature of the hermitian form $\hup$ being  $0$ or $4$, respectively, and with enveloping algebras necessarily real forms of the Lie algebra of type $E_6$.
Proposition~\ref{pr_signat} shows that the signature of the Killing form of the 
enveloping algebra of a symplectic triple system is one plus the signature of the Killing form of its inner derivation algebra. The signature of the Killing form of $\frsu_{p,q}$ is $1-(p-q)^2$, so 
$\sig(\frsu_{3,3})=1$ and $\sig(\frsu_{5,1})=-15$. We conclude that the standard enveloping algebras of the triple
systems above are isomorphic to $\fre_{6,2}$ and $\fre_{6,-14}$, respectively.
\smallskip

For the case of standard enveloping algebra of signature $-14$, the reader can also consult \cite[\S V.A.]{grade6-14}, where the related $\ZZ$-grading on 
$\mathfrak{e}_{6,-14}$ is described in detail in order to look for the fine gradings obtained by refining the  $\ZZ$-grading (gradings which are  compatible with our linear model).

\medskip 

\subsection{Non-split $E_7$ types}\label{ss:E7nonsplit}\quad

The half-spin modules for $\frso_{12}(\CC)$ descend to $\RR$ for
$\frso_{p,q}(\RR)$ ($p+q=12$, $p\geq 6$), if and only if the even Clifford algebra 
$\Cl_{p,q}(\RR)\subo$ is isomorphic to 
$\Mat_{32}(\RR)\times\Mat_{32}(\RR)$, and this is the case if and only if the discriminant of the quadratic form with signature $p-q$ and the
Brauer class of $\Cl_{p,q}(\RR)$ are trivial. This happens if and only 
if $p$ is even and $p-q$ is congruent to $0$ or $2$ modulo $8$ (\cite[p.~125]{Lam}). That is, if and only if $p=6$ or $p=10$. 
In case $p=6$, $\frso_{p,q}(\RR)=\frso_{6,6}(\RR)$ is the split
simple Lie algebra of type $D_6$. Let us consider now the other case:
$p=10$.

Let then $V$ be a real vector space of dimension $12$ endowed with a
non-degenerate quadratic form $\qup_V$ of signature $8$. 
Let
$\{w_1,\ldots,w_{12}\}$ be an orthogonal basis relative to $\qup_V$
with $\qup_V(w_i)=1$ for $1\leq i\leq 10$, and $\qup_V(w_i)=-1$ for 
$i=11,12$.

As in Subsection \ref{ss:E7}, let $W$ be a six-dimensional complex space. The complexified space $V^\CC=V\otimes_\RR\CC$ can be isometrically identified  with $W\oplus W^*$ by means of
\[
\begin{cases}
w_i\leftrightarrow e_i+e^i&\text{for $i=1,\ldots,6$,}\\
w_{6+i}\leftrightarrow \bi(e_i-e^i)&\text{for $i=1,\ldots,4$,}\\
w_{6+i}\leftrightarrow e_i-e^i&\text{for $i=5,6$.}
\end{cases}
\]
In this way, the real Clifford algebra $\Cl(V,\qup_V)$ embeds into
the complex Clifford algebra $\Cl(W\oplus W^*,\qup)$. Consider the isomorphism $\Lambda$ in 
\eqref{eq:E7_Lambda}. The element $x=w_7w_8w_9w_{10}$ in
$\Cl(V,\qup_V)\subseteq \Cl(W\oplus W^*,\qup)$  commutes with $w_i$, for $i\neq 7,8,9,10$, 
 anticommutes with $w_i$, for $i=7,8,9,10$ and it satisfies $x^2=1$,
so $\Lambda(x)\in\End_\CC\bigl(\bigwedge W\bigr)$ is such that $\Lambda(x)^2=\id$.

 Consider the 
conjugate-linear endomorphism $\Delta$ of $\bigwedge W$ that is the identity
on the basic elements $e_I$, for $I\subset\{1,\dots,6\}$. Then $\Delta^2=\id$, $\Delta$ commutes
with $\Lambda(e_i)=l_{e_i}$ and $\Lambda(e^i)=\delta_{e^i}$ for all $i$, and hence
$\Delta$ commutes too with $\Lambda(w_i)$, for $i\neq 7,8,9,10$, and
anticommutes with $\Lambda(w_i)$ for $i=7,8,9,10$. This shows that
$\Delta$ and $\Lambda(x)$ commute.

Take the composition $\Gamma=\Delta\Lambda(x)$. This is a conjugate-linear endomorphism of $\bigwedge W$, it preserves its even and odd parts, its order is $2$: $\Gamma^2=\id$,  and it commutes with
$\Lambda(w_i)$ for all $i=1,\ldots,12$. Hence $\Gamma$ commutes with
the action of $\Cl(V,\qup_V)$ on $\bigwedge W$ through $\Lambda$.

It turns out that the fixed subspace
$\bigl(\bigwedge W\bigr)^\Gamma=\{z\in \bigwedge W: 
\Gamma(z)=z\}$, and its even and odd parts, are invariant under
the action of $\Cl(V,\qup_V)$, and hence the even and odd parts are
the half-spin modules of $\frso(V,\qup_V)$. 

Denote by $T=\bigwedge\subo W$ the complex symplectic triple system 
as in Subsection \ref{ss:E7}. The unique, up to scalars, non-zero 
$\frso(W\oplus W^*,\qup)$-invariant
bilinear form on $T$ is the restriction $(\cdot\vert\cdot)$ of $\bup$ in \eqref{eq:E7_b}. Hence there is a scalar $\alpha\in\CC$ such that
the restriction of $(\cdot\vert\cdot)$ to the fixed subspace satisfies
$(T^\Gamma\vert T^\Gamma)\in \alpha\RR$.
Let us check that this scalar is real.
Note that $\Gamma(1)=e_{1234}$, 
$\Gamma(e_{56})=e_{123456}$, and that   $1+\Gamma(1)$ and 
$e_{56}+\Gamma(e_{56})$ are in $T^\Gamma$. We compute
\[
(1+e_{1234}\vert e_{56}+e_{123456})=
(1\vert e_{123456})+(e_{1234}\vert e_{56})=2\, 
\text{($\in \RR$).}
\]
We conclude, as in Subsection \ref{ss:E6nonsplit},  that
$\bigl(T^\Gamma,[\cdot,\cdot,\cdot],(\cdot\vert\cdot)\bigr)$ is a real
symplectic triple system, with inner derivation algebra isomorphic
to $\frso(V,\qup_V)\cong\frso_{10,2}(\RR)$, and standard enveloping
Lie algebra isomorphic to $\fre_{7,-25}$ (using Proposition~\ref{pr_signat} and that the signature of the 
Killing form of $\frso_{10,2}(\RR)$
is $-26$.)

\medskip

There is another example of real simple symplectic triple system with 
standard enveloping Lie algebra of type $E_7$.

To describe it, let now $\mathcal{U}$ be a right $\HH$-module of dimension $6$
endowed with a skew-hermitian non-degenerate form 
$\hup\colon \mathcal{U}\times \mathcal{U}\to\HH$. Consider the natural copy of $\CC=\RR+\RR\bi$
contained in $\HH=\RR+\RR\bi+\RR\bj+\RR\bk$. By restriction of 
scalars, we may look at $\mathcal{U}$ as a complex vector space. We will write
$\mathcal{U}_\CC$ when we do this. Then $\mathcal{U}_\CC$ has dimension $12$, and it
is endowed with a non-degenerate quadratic form $\qup_\mathcal{U}$ defined by:
\begin{equation}\label{eq:formacuad}
\qup_\mathcal{U}(w)=\frac12\operatorname{proj}_\CC\bigl(\bj \hup(w,w)\bigr),
\end{equation}
where we use the natural projection 
$\operatorname{proj}_\CC(a+b\bi+c\bj+d\bk)=a+b\bi$. Note that
for all $w\in \mathcal{U}$, $\qup_\mathcal{U}( w\bj)$ is the (complex) conjugate of 
$q_\mathcal{U}(w)$:
\begin{equation}\label{eq:qW}
q_\mathcal{U}(w\bj)=\overline{\qup_\mathcal{U}(w)}.
\end{equation}
Denote by $\bup_\mathcal{U}$ the associated (bilinear, complex) polar form:
$\bup_\mathcal{U}(u,v)
=\qup_\mathcal{U}(u+v)-\qup_\mathcal{U}(u)-\qup_\mathcal{U}(v)$. 
Note that for any
$u,v\in \mathcal{U}$ we have $\bj \hup(u,v)=\bup_\mathcal{U}(u,v)+\{u\vert v\}\bj$ for some
$\{u\vert v\}\in\CC$, and hence:
\[
\hup(u,v)=\begin{cases}
 \bj \hup(u\bj,v)=\bup_\mathcal{U}(u\bj,v)+\{u\bj\vert v\}\bj,&\\
   -\bj^2 \hup(u,v)=-\bj\bigl(\bup_\mathcal{U}(u,v)+\{u\vert v\}\bj\bigr)
   =\overline{\{u\vert v\}}-\overline{\bup_\mathcal{U}(u,v)}\bj,&
\end{cases}
\]
because $\bj\alpha=\overline{\alpha}\bj$ for all $\alpha\in\CC$.
Thus we recover $\hup$ as 
$\hup(u,v)=\bup_\mathcal{U}(u\bj,v){\az-}\overline{\bup_\mathcal{U}(u,v)}\bj$.

The simple Lie algebra $\frso^*(\mathcal{U},\hup)$ appears now as follows, where 
$R_\bj$ denotes the 
multiplication by $\bj$ on $\mathcal{U} $:
\begin{equation}\label{eq_meterelso}
\begin{split}
\frso^*(\mathcal{U},\hup)&=\{f\in\End_\HH(\mathcal{U}): 
     \hup\bigl(f(u),v\bigr)+\hup\bigl(u,f(v)\bigr)=0\ \forall u,v\in \mathcal{U}\}\\
   &=\{f\in\End_\CC(\mathcal{U}):  fR_\bj=R_{\bj}f, \  \bup_\mathcal{U}\bigl(f(u),v\bigr)+\bup_\mathcal{U}\bigl(u,f(v)\bigr)=0\ \forall u,v\in \mathcal{U}\}\\
   &=\frso(\mathcal{U}_\CC,\qup_\mathcal{U})\cap
          \{f\in\End_\CC(\mathcal{U}):  fR_\bj=R_{\bj}f\}.
\end{split}
\end{equation}

Pick a basis $\{v_1,v_2,v_3,w_1,w_2,w_3\}$ of $\mathcal{U}$ (over $\HH$) with
$\hup(v_i,v_j)=0=\hup(w_i,w_j)$, and $\hup(v_i,w_j)=\delta_{ij}$, for
$1\leq i,j\leq 3$. As in Subsection \ref{ss:E7}, let $W$ be a six-dimensional complex space. The (complex) vector space $\mathcal{U}_\CC$, endowed
with $\qup_\mathcal{U}$, can be identified isometrically with $W\oplus W^*$, 
endowed with the quadratic form $\qup$ in Subsection~\ref{ss:E7}, by means of
\begin{align*}
v_i&\leftrightarrow e_i,& v_i\bj&\leftrightarrow e_{i+3},\\
-w_i\bj&\leftrightarrow e^i,& w_i&\leftrightarrow e^{i+3},
\end{align*}
for $i=1,2,3$. That is, $W$ is identified with $v_1\HH+v_2\HH+v_3\HH$ and $W^*$ 
with $w_1\HH+w_2\HH+w_3\HH$. In this way, the (complex) Clifford
algebra $\Cl(\mathcal{U}_\CC,\qup_\mathcal{U})$ is identified with 
$\Cl(W\oplus W^*,\qup)$ in Subsection \ref{ss:E7}.

The conjugate-linear map $R_\bj$ on $\mathcal{U}_\CC$ induces a   conjugate-linear algebra automorphism $\widehat{R_\bj}$ of 
$\Cl(\mathcal{U}_\CC,\qup_\mathcal{U})$: 
$\widehat{R_\bj}(u_1\cdots u_r)=(u_1\bj)\cdots (u_r\bj)$
for any $r\geq 0$ and $u_1,\ldots,u_r\in \mathcal{U}$. In the same vein, it 
induces a conjugate-linear algebra automorphism $\widetilde{R_\bj}$ of
the exterior algebra $\bigwedge W$:
$\widetilde{R_\bj}(u_1\wedge\cdots\wedge u_r)=
(u_1\bj)\wedge\cdots\wedge (u_r\bj)$
for any $r\geq 0$ and $u_1,\ldots,u_r\in W$. Note that $\widehat{R_\bj}$ has order $2$ in the even Clifford algebra
$\Cl\subo(\mathcal{U}_\CC,\qup_\mathcal{U})$, and order $4$ in the odd part. Also,
$\widetilde{R_\bj}$ has order $2$ in $T=\bigwedge\subo W$, and order $4$ in $\bigwedge\subuno W$.

Using \eqref{eq:qW}, it follows that the algebra isomorphism 
$\Lambda$ in \eqref{eq:E7_Lambda} satisfies 
$\Lambda\widehat{R_\bj}(x)\widetilde{R_\bj}
=\Lambda(x)\widetilde{R_\bj}$ for all $x\in\Cl(\mathcal{U}_\CC,\qup_\mathcal{U})$, 
that is, $\Lambda\widehat{R_\bj}=\Ad_{\widetilde{R_\bj}}\Lambda$.  Thus we have 
$\widetilde{R_\bj}(x.s)=\widehat{R_\bj}(x).\widetilde{R_\bj}(s)$ for 
$x\in\Cl(\mathcal{U}_\CC,\qup_\mathcal{U})$
and $s\in T=\bigwedge\subo W$. We conclude that the space of fixed elements 
$T^{\widetilde{R_\bj}}$ 
is a module for the fixed subalgebra 
$\widehat\cC=\left(\Cl\subo(\mathcal{U}_\CC,\qup_\mathcal{U})\right)^{\widehat{R_\bj}}$, whose complexification is the half-spin module $T=\bigwedge\subo W$.

 Note that $\Ad_{R_\bj}(\sigma_{x,y})= \sigma_{x\bj,y\bj}$, and this coincides with 
 $\widehat{R_\bj}(\sigma_{x,y}) $  if we see the element $\sigma_{x,y}\in \frso(\mathcal{U}_\CC,\qup_\mathcal{U}) $ as the element $\frac{-1}{2}[x,y]$ in  
 $\Cl\subo(\mathcal{U}_\CC,\qup_\mathcal{U})$. Hence $\frso^*(\mathcal{U},\hup)$ is contained in 
 $\widehat\cC$ by \eqref{eq_meterelso}, and acts in $T^{\widetilde{R_\bj}}$.

The unique, up to scalars, non-zero $\frso(\mathcal{U}_\CC,\qup_\mathcal{U})$-invariant
bilinear form on $T$ is the restriction $(\cdot\vert\cdot)$ of $\baup$ in
\eqref{eq:E7_b}. Hence there is a scalar $\alpha\in\CC$ such that
the restriction of $(\cdot\vert\cdot)$ to the fixed subspace satisfies
$(T^{\widetilde{R_\bj}}\vert T^{\widetilde{R_\bj}})\subseteq\alpha\RR$.
Note that the element $e_{123456}
=v_1\wedge v_2\wedge v_3\wedge v_1\bj\wedge v_2\bj\wedge v_3\bj$
is fixed by $\widetilde{R_\bj}$, as well as the element $1$. Since
$(1\vert e_{123456})=1$, it follows that $\alpha$ belongs to $\RR$, 
 and we conclude, as above, that 
$\bigl(T^{\widetilde{R_\bj}},[\cdot,\cdot,\cdot],
(\cdot\vert\cdot)\bigr)$ is a real symplectic triple system, with inner derivation algebra isomorphic to $\frso^*(\mathcal{U},\hup)\cong\frso^*_{12}$, 
and standard enveloping Lie algebra isomorphic to $\fre_{7,-5}$ 
(since the signature of the Killing form of $\frso^*_{12}$ is $-6$).

\begin{remark}
The conjugate-linear map $\widetilde{R_\bj}$ squares to $-\id$ on
$\bigwedge\subuno U$, commuting with the action of the fixed 
subalgebra $\widehat\cC $.  This
gives $\bigwedge\subuno W$ the structure of a $\HH$-module of dimension $16$ and proves that $\widehat\cC$ is isomorphic to
$\Mat_{32}(\RR)\times\Mat_{16}(\HH)$. 

(Even) Clifford algebras can be defined for any finite-dimensional central simple algebra endowed with an orthogonal involution over a field of characteristic not two. According to \cite[\S 3]{Jacobson}, 
$\widehat\cC$ is the even Clifford 
algebra of the pair $\bigl(\End_\HH(\mathcal{U}),*)$, where $*$ denotes the 
involution obtained as the
conjugation associated to the skew-hermitian form $\hup$.
\end{remark}

\pagebreak

\subsection{Non-split $E_8$ type}\label{ss:E8nonsplit}\quad

We will follow here the notation and results in Subsection \ref{ss:E8} over $\CC$.
Thus $U$ is an eight-dimensional complex vector space, but this time endowed with a non-degenerate hermitian form $\hup\colon U\times U\to\CC$. Again, changing $\hup$ by 
$-\hup$ if necessary, we assume the signature of $\hup$ is $\geq 0$. We fix an orthogonal basis $\{e_i: 1\leq i\leq 8\}$ with
$\hup(e_i,e_i)=1$ for $1\leq i\leq p$, and $\hup(e_i,e_i)=-1$ for $p+1\leq i\leq 8$, with $p\geq 4$. Use this basis to define the determinant map 
$\det\colon \bigwedge U\to \CC$, so that $\det(\bigwedge^iU)=0$ for $i<8$, and $\det(e_{12345678})=1$.

The hermitian form $\hup$ induces a conjugate-linear automorphism 
$\Upsilon$ of $\cL=\frsl(U)\oplus\bigwedge^4U$ as follows:
\begin{itemize}
\item 
$\Upsilon(l)=-l^*$ for $l\in\frsl(U)$, where $l^*$ is the adjoint
of $l$ relative to $\hup$: $\hup\bigl(l(v),w\bigr)=\hup\bigl(v, l^*(w)\bigr)$ for all
$v,w\in U$.   

\item
$\hup$ induces  a conjugate-linear map for any $s=0,\ldots,8$:
\[
\begin{split}
\widehat{\hup}_s\colon \textstyle{\bigwedge^sU}&\longrightarrow \textstyle{\bigwedge^s U^*}\\
w_1\wedge\cdots\wedge w_s
	&\mapsto \hup(\cdot,w_1)\wedge\cdots\wedge \hup(\cdot,w_s).
\end{split}
\]
Then for all $x\in\bigwedge^4U$, we define, using \eqref{eq:E8Phi},
\[
\Upsilon(x)\bydef \bigl(\widehat{\hup}_4\bigr)^{-1}\Phi_4(x).
\]
In particular, for any increasing sequence $I$ of size $4$, we get:
\begin{equation}\label{eq:E8nonsplitUpsilon}
\Upsilon(e_I)=(-1)^{I\,\overline{I}}
(-1)^{\lvert I\cap I_p\rvert}e_{\overline{I}},
\end{equation}
where $I_p=\{p+1,\ldots,8\}$, because $\hup(\cdot,e_i)$ is $e^i$ for $i\leq p$ and $-e^i$ for  $i>p$.
\end{itemize}

\begin{lemma}\label{le:E8nonsplit}
The map $\Upsilon\colon \cL\to\cL$ thus defined is a Lie algebra automorphism over 
$\RR$ with $\Upsilon^2=\id$ if and only if $p$ is even.
\end{lemma}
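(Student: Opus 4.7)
The plan is to decompose the verification into the behavior of $\Upsilon$ on each graded piece of $\cL = \frsl(U) \oplus \bigwedge^4 U$, after observing that $\Upsilon$ is conjugate-$\CC$-linear by construction and hence $\RR$-linear. Both the bracket-preservation and the condition $\Upsilon^{2} = \id$ will turn out to be governed by the same parity of $p$.

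On $\frsl(U)$, the identities $(l^{*})^{*} = l$ and $[l,m]^{*} = [m^{*}, l^{*}]$ for the hermitian adjoint show at once that $\Upsilon\vert_{\frsl(U)}$ is an $\RR$-Lie algebra automorphism with $\Upsilon^{2} = \id$, independently of $p$. For the mixed action, I would record that $\Phi_4$ is $\frsl(U)$-equivariant between the natural actions on $\bigwedge^4 U$ and $\bigwedge^4 U^{*}$ (thanks to $\trace(l)=0$), while $\widehat{\hup}_4$ intertwines the $l$-action with the $(-l^{*})$-action; composing these and using $\Upsilon^{2}\vert_{\frsl(U)} = \id$ delivers $\Upsilon(l.x) = \Upsilon(l).\Upsilon(x)$ for all $l \in \frsl(U)$ and $x \in \bigwedge^4 U$, again without restriction on $p$.

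The parity enters only through the remaining two items. First, applying \eqref{eq:E8nonsplitUpsilon} twice to $e_I$ and using $|I \cap I_p| + |\bar I \cap I_p| = 8-p$ together with $(-1)^{I\bar I} = (-1)^{\bar I I}$ (since $|I|=4$ is even) gives $\Upsilon^{2}(e_I) = (-1)^{8-p} e_I$, so $\Upsilon^{2} = \id$ on $\bigwedge^4 U$ iff $p$ is even. Second, for the bracket $\bigwedge^4 U \times \bigwedge^4 U \to \frsl(U)$, the defining identity $\trace(l[x,y]) = (l.x\vert y)_\wedge$ combined with the general fact $\trace(l f^{*}) = \overline{\trace(l^{*} f)}$ (a direct matrix computation in any $\hup$-orthogonal basis) gives
\[
\trace\bigl(l\,\Upsilon([x,y])\bigr) = \overline{(\Upsilon(l).x\vert y)_\wedge},
\]
whereas the mixed equivariance together with the immediate consequence of \eqref{eq:E8nonsplitUpsilon}
\[
(\Upsilon(x)\vert\Upsilon(y))_\wedge = (-1)^{8-p}\,\overline{(x\vert y)_\wedge}
\]
produces $\trace(l[\Upsilon(x),\Upsilon(y)]) = (-1)^{8-p}\,\overline{(\Upsilon(l).x\vert y)_\wedge}$. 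Hence this bracket is preserved iff $p$ is even, and the lemma follows by combining all the pieces. The main technical obstacle is the bookkeeping of permutation signs and of the sign $(-1)^{|I \cap I_p|}$; the pleasant feature is that the two a priori distinct obstructions — bracket-preservation on $\bigwedge^4 U$ and $\Upsilon^{2} = \id$ — are ruled by the single scalar $(-1)^{8-p}$.
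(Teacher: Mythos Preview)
Your argument is correct and follows essentially the same route as the paper's proof: both verify the automorphism property piecewise on $\frsl(U)$, on the mixed action, and on the $\bigwedge^4U$-bracket, and both compute $\Upsilon^2(e_I)=(-1)^{8-p}e_I$. The only cosmetic difference is that you carry the factor $(-1)^{8-p}$ explicitly through the identity $(\Upsilon(x)\vert\Upsilon(y))_\wedge=(-1)^{8-p}\overline{(x\vert y)_\wedge}$ and thereby see simultaneously that bracket-preservation on $\bigwedge^4U$ and $\Upsilon^2=\id$ are equivalent to the same parity condition, whereas the paper first establishes $\Upsilon^2=\id\Leftrightarrow p$ even, then assumes $p$ even and verifies $(\Upsilon(x)\vert\Upsilon(y))_\wedge=\overline{(x\vert y)_\wedge}$ and the remaining bracket identity; the underlying computations are the same.
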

\begin{proof}
Since $l\mapsto l^*$ is an involution of $\End_\CC(U)$, the restriction
$\Upsilon\vert_{\frsl(U)}$ is a Lie algebra automorphism whose square
is the identity. For any increasing sequence $I$ of size $4$, we have
\begin{multline*}
\Upsilon^2(e_I)
=(-1)^{I\overline{I}}(-1)^{\lvert I\cap I_p\rvert}\Upsilon(e_{\overline{I}})
=
(-1)^{\lvert I\cap I_p\rvert}(-1)^{\lvert\overline{I}\cap I_p\rvert}e_I
=(-1)^{\lvert I_p\rvert}e_I=(-1)^{8-p}e_I,
\end{multline*}
so that $\Upsilon^2=\id$ if and only if $p$ is even.

Assume hence from now on that $p$ is even. For $l\in \frsl(U)$ and $w_1,\ldots,w_4\in U$, we get
\begin{equation}
\begin{split}\label{eq_accioncompat}
\widehat{\hup}_4\bigl(l.(w_1\wedge\cdots\wedge w_4)\bigr)
	&=\hup(\cdot,l.w_1)\wedge \hup(\cdot,w_2)\wedge \hup(\cdot,w_3)\wedge \hup(\cdot, w_4)+\cdots\\
&\qquad +\hup(\cdot,w_1)\wedge \hup(\cdot,w_2)\wedge \hup(\cdot,w_3)\wedge \hup(\cdot, l.w_4)\\[4pt]
&=\hup\bigl(l^*(\cdot),w_1)\wedge\cdots\wedge \hup(\cdot,w_4)+\cdots\\
&= (-l^*.\hup(\cdot,w_1))\wedge\cdots\wedge \hup(\cdot,w_4)+\cdots\\
&=\Upsilon(l).\widehat{\hup}_4(w_1\wedge\cdots\wedge w_4).
\end{split}
\end{equation}
This proves $\widehat{\hup}_4\bigl(\Upsilon(l).\Upsilon(x)\bigr)
=l.\bigl(\widehat{\hup}_4\Upsilon(x)\bigr)=l.\Phi_4(x)$, and hence,
for $l\in \frsl(U)$ and $x\in\bigwedge^4U$, we get:
\[
\begin{split}
\Upsilon\bigl([l,x]\bigr)&=\Upsilon(l.x)
=\bigl(\widehat{\hup}_4\bigr)^{-1}\Phi_4(l.x)\\
	&=(\widehat{\hup}_4\bigr)^{-1}\bigl(l.\Phi_4(x)\bigr)\quad
\text{(as $\Phi_4$ is $\frsl(U)$-invariant)}\\
	&=\Upsilon(l).\Upsilon(x)=[\Upsilon(l),\Upsilon(x)].
\end{split}
\]
Note that for any increasing sequences $I,J$ with $\lvert I\rvert=4=\lvert J\rvert$, \eqref{eq:E8nonsplitUpsilon} gives
\[
(\Upsilon(e_I)\vert\Upsilon(e_{\overline{I}}))_\wedge
= (-1)^{\lvert I\cap I_p\rvert}(-1)^{\lvert\overline{I}\cap I_p\rvert}(e_I\vert e_{\overline{I}})_\wedge=(e_I\vert e_{\overline{I}})_\wedge=\overline{(e_I\vert e_{\overline{I}})_\wedge}
\]
and $(\Upsilon(e_I)\vert\Upsilon(e_J))_\wedge=0= \overline{(e_I\vert e_J)_\wedge}$ if 
$J\neq \overline{I}$. Hence we get
\[
(\Upsilon(x)\vert\Upsilon(y))_\wedge=\overline{(x\vert y)_\wedge}
\]
for all $x,y\in \bigwedge^4U$. Finally, for $l\in\frsl(U)$ and 
$x,y\in\bigwedge^4U$, we get
\[
\begin{split}
&\trace\Bigl(\Upsilon(l)\Upsilon([x,y])\Bigr)
   =\trace(l^*[x,y]^*)=\trace\bigl(([x,y]l)^*\bigr)=
   \overline{\trace([x,y]l)}\\
&\qquad =\overline{\trace(l[x,y])}=\overline{(l.x\vert y)_\wedge}
   =(\Upsilon(l.x)\vert\Upsilon(y))_\wedge\\
&\qquad =(\Upsilon(l).\Upsilon(x)\vert\Upsilon(y))_\wedge
  =\trace\bigl(\Upsilon(l)[\Upsilon(x),\Upsilon(y)]\bigr),
\end{split}
\]
so that $\Upsilon([x,y])=[\Upsilon(x),\Upsilon(y)]$, and $\Upsilon$ is
a conjugate-linear bijection and an automorphism of $\cL$ as a real Lie algebra. 
\end{proof}

\begin{proposition}\label{pr:E8nonsplit}
With the notations above and even $p$, the fixed subalgebra of 
$\Upsilon$: $\cS=\cL^\Upsilon\bydef\{l\in\cL: \Upsilon(l)=l\}$ is a form of $\cL$ 
(i.e., $\cS$ is a real subalgebra and $\cL=\cS\oplus \bi\,\cS$). Moreover,
\begin{itemize}
\item If $p=4$ or $8$, then $\cS$ is, up to isomorphism, the simple 
exceptional real Lie algebra $\fre_{7,7}$ (the split simple real Lie 
algebra of type $E_7$).
\item If $p=6$, then $\cS$ is, up to isomorphism, the simple
exceptional real Lie algebra $\fre_{7,-25}$.
\end{itemize}
\end{proposition}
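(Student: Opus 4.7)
The plan is to first check that $\cS = \cL^\Upsilon$ is a real form of $\cL$, and then identify which real form of $E_7$ it is by computing the signature of its Killing form. For the first part, since $\Upsilon$ is a conjugate-linear involution of $\cL$ by Lemma~\ref{le:E8nonsplit}, any $x \in \cL$ decomposes as $x = \tfrac{1}{2}(x+\Upsilon(x)) + \bi\cdot\tfrac{-\bi}{2}(x-\Upsilon(x))$, with both components in $\cS$ (using $\Upsilon(\bi y) = -\bi\Upsilon(y)$), and $\cS \cap \bi\cS = 0$ because the only vector fixed by both $\Upsilon$ and $-\Upsilon$ is zero. Hence $\cL = \cS \oplus \bi\cS$, so $\cS$ is a simple real Lie algebra whose complexification is of type $E_7$.

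To identify $\cS$, recall that the four real forms of $E_7$ are distinguished by the signatures of their Killing forms, namely $7, -5, -25, -133$ for $\fre_{7,7}, \fre_{7,-5}, \fre_{7,-25}$ and the compact form, respectively. The Killing form of $\cS$ is the restriction of $\kappa_\cL = 36(\cdot\vert\cdot)_\cL$, and since $\frsl(U)$ and $\bigwedge^4 U$ are $\kappa_\cL$-orthogonal, I would split $\cS = \cS_0 \oplus \cS_1$ with $\cS_0 = \cS\cap\frsl(U)$ and $\cS_1 = \cS\cap\bigwedge^4 U$, and compute each signature separately. On $\cS_0$: the condition $\Upsilon(l) = -l^*$ gives $\cS_0 = \frsu(U,\hup) \cong \frsu_{p,8-p}$, whose Killing form has signature $1 - (2p-8)^2$, namely $1, -15, -63$ for $p = 4, 6, 8$.

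The main calculation is on $\cS_1$, which I would handle by a pairing analysis. Group the basis $\{e_I : |I|=4\}$ into the $35$ unordered complementary pairs $\{I, \overline{I}\}$. Each pair spans a $2$-dimensional complex subspace on which $\Upsilon$ acts as a conjugate-linear involution exchanging $e_I$ and $c\,e_{\overline{I}}$, with $c = (-1)^{I\overline{I}}(-1)^{|I \cap I_p|}$ given by \eqref{eq:E8nonsplitUpsilon}. A real basis of the fixed $2$-plane is $\{e_I + c\,e_{\overline{I}},\,\bi(e_I - c\,e_{\overline{I}})\}$, and using $(e_I\vert e_{\overline{I}})_\wedge = (-1)^{I\overline{I}}$ a direct computation gives that both basis vectors have self-pairing $2c(-1)^{I\overline{I}} = 2(-1)^{|I \cap I_p|}$ and are orthogonal to one another (and to all other pairs, which contribute $0$). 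Hence each pair contributes $\pm 2$, yielding
\[
\sig\bigl((\cdot\vert\cdot)_\wedge|_{\cS_1}\bigr) = \sum_{|I|=4}(-1)^{|I \cap I_p|} = [t^4]\bigl((1+t)^p(1-t)^{8-p}\bigr),
\]
which evaluates to $6, -10, 70$ for $p = 4, 6, 8$ respectively.

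Combining, the total signature of $\kappa_\cS$ is $1+6=7$ for $p=4$, $-15-10=-25$ for $p=6$, and $-63+70=7$ for $p=8$, identifying $\cS$ as $\fre_{7,7}, \fre_{7,-25}, \fre_{7,7}$ in these three cases. The main obstacle is the sign bookkeeping on $\cS_1$: one must verify that both real basis vectors of each fixed $2$-plane carry the \emph{same} sign in $(\cdot\vert\cdot)_\wedge$ (so each plane contributes $\pm 2$ rather than $0$), which is precisely the compatibility between the twist $c$ in $\Upsilon$ and the pairing $(e_I\vert e_{\overline{I}})_\wedge$ forced by $\Upsilon^2 = \id$.
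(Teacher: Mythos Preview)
Your proof is correct and follows essentially the same approach as the paper: decompose $\cS$ orthogonally as $\frsu(U,\hup)\oplus\bigl(\bigwedge^4 U\bigr)^\Upsilon$ and compute the signature on each piece. The only cosmetic difference is in the $\cS_1$ count: the paper fixes $1\in I$ and enumerates explicitly the number of representatives with even $\lvert I\cap I_p\rvert$, while you package the same count as $\sum_{\lvert I\rvert=4}(-1)^{\lvert I\cap I_p\rvert}=[t^4](1+t)^p(1-t)^{8-p}$, exploiting that $I$ and $\overline{I}$ contribute the same sign when $8-p$ is even.
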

\begin{proof}
It is clear that $\cS$ is a real form of $\cL$, since it is the fixed subspace by an order 2 conjugate-linear automorphism.
The bilinear form $(\cdot\vert\cdot)_\cL$ is $\frac{1}{36}$ times the 
Killing form of $\cL$. Hence the restriction of $(\cdot\vert\cdot)_\cL$ to $\cS$ is
$\frac{1}{36}$ times the 
Killing form of $\cS$ and, in particular, it takes values in $\RR$.

Taking into account that
the signature of the Killing form of $\frsu_{p,8-p}$  is 
$1-(p-(8-p))^2=1-(2p-8)^2$, and that the number of increasing
sequences $I=(1,i,j,k)$ with even $\lvert I\cap I_p\rvert$ is:
\begin{itemize}
\item $1+3\binom{4}{2}=19$ if $p=4$ (so $I_p=\{5,6,7,8\}$),
\item $\binom{5}{3}+5=15$ if $p=6$,
\item $\binom{7}{3}=35$ if $p=8$,
\end{itemize}
we conclude that the signature of $\kappa_\cS$ is:
\begin{itemize}
\item $1+(38-32)=7$ for $p=4$,
\item $-15+(30-40)=-25$ for $p=6$,
\item $-63+70=7$ for $p=8$.
\end{itemize}
This finishes the proof.
\end{proof}

\begin{remark}
We may consider too the real forms of $\cL$ obtained as 
$\overline{\cS}=\frsu(U,\hup)\oplus\bi \bigl(\bigwedge^4U\bigr)^\Upsilon$. 
The corresponding signatures are:
\begin{itemize}
\item $1-(38-32)=-5$ for $p=4$,
\item $-15-(30-40)=-5$ for $p=6$,
\item $-63-70=-133$ for $p=8$.
\end{itemize}
Therefore we obtain  precise linear models of   the other two real simple Lie algebras of type
$E_7$: $\fre_{7,-5}$ and the compact one, $\fre_{7,-133}$.
This is a nice complement to \cite[Chapter 12]{Adams}.
\end{remark}

From now on, we restrict to the case $p=6$ (as for $p=4$ or $p=8$, 
$\cS$ is the split algebra $\fre_{7,7}$, and the attached symplectic triple 
system is, up to weak isomorphism, the split one in Subsection \ref{ss:E7}).
 The aim is to get a symplectic triple 
system with inner derivation algebra $\fre_{7,-25}$.

On the $\cL$-module $T=\bigwedge^2U\oplus\bigwedge^2U^*$ consider
the order $2$ conjugate-linear isomorphism $\Upsilon_T$ given by:
\begin{itemize}
\item $\Upsilon_T(x)=\widehat{\hup}_2(x)\in\bigwedge^2U^*$ for 
$x\in\bigwedge^2U$,

\item $\Upsilon_T(y)=\bigl(\widehat{\hup}_2\bigr)^{-1}(y)\in\bigwedge^2U$
for $y\in\bigwedge^2U^*$.
\end{itemize}
As in \eqref{eq_accioncompat},
 we have, for $l\in\frsl(U)$ and
$x\in\bigwedge^2U$, 
\[
\Upsilon_T(l.x)=\widehat{\hup}_2(l.x)=\Upsilon(l).\widehat{\hup}_2(x)
=\Upsilon(l).\Upsilon_T(x),
\]
and, similarly, $\Upsilon_T(l.y)=\Upsilon(l).\Upsilon_T(y)$   for $y\in\bigwedge^2U^*$. Also, for increasing sequences $I$ of 
size $4$ and $J$ of size $2$, $I\cap J=\emptyset$,
\[
\begin{split}
\Upsilon_T(e_I.e_J)&= 
  \bigl(\widehat{\hup}_2\bigr)^{-1}\bigl((-1)^{IJ}
    (-1)^{(I\cup J)(\overline{I\cup J})}e^{\overline{I\cup J}}\bigr)\\
	&=(-1)^{IJ}
    (-1)^{(I\cup J)(\overline{I\cup J})}
    (-1)^{(\overline{I\cup J})\cap I_6}e_{\overline{I\cup J}},
\end{split}
\]
while
\[
\begin{split}
&\Upsilon(e_I).\Upsilon_T(e_J)=(-1)^{I\overline{I}}
    (-1)^{\lvert I\cap I_6\rvert}e_{\overline{I}}.
      \bigl((-1)^{\lvert J\cap I_6\rvert}e^J\bigr)\\
&\qquad =(-1)^{I\overline{I}}(-1)^{\lvert (I\cup J)\cap I_6\rvert}
     e_{\overline{I}}.e^J\\
&\qquad =(-1)^{I\overline{I}}(-1)^{\lvert (I\cup J)\cap I_6\rvert}
   (-1)^{\overline{I}I}(-1)^{IJ}
     (-1)^{(I\cup J)(\overline{I\cup J})}e_{\overline{I}\cap\overline{J}},
\end{split}
\]
so we get $\Upsilon_T(e_I.e_J)=\Upsilon(e_I).\Upsilon_T(e_J)$, 
and similarly for $e_I.e^J$. Therefore we obtain
\[
\Upsilon_T(l.x)=\Upsilon(l).\Upsilon_T(x)
\]
for any $l\in \cL$ and $x\in T$, and hence 
the subspace of fixed elements $T^\Upsilon\bydef\{x\in T: \Upsilon_T(x)=x\}$ 
is invariant under the action of $\cS$, with 
$T=T^\Upsilon\oplus\bi T^\Upsilon$.

We know that both the (complex) dimensions of
$\Hom_{\cL}(T\otimes_\CC T,\cL)$ and 
$\Hom_{\cL}(T\otimes_\CC T,\CC)$ are equal to $1$.
Therefore, the (real) dimensions of 
$\Hom_\cS(T^\Upsilon\otimes_\RR T^\Upsilon,\cS)$
and $\Hom_\cS(T^\Upsilon\otimes_\RR T^\Upsilon,\RR)$ are equal to  $1$ too.  

It follows, as in Subsection \ref{ss:E6nonsplit}, that there is a non-zero
scalar $\alpha\in\CC$ such that $d_{x,y}\in\alpha\cS$ for any $x,y\in T^\Upsilon$,
where $d_{x,y}$ is given by \eqref{eq:E8split_dxy}. 
For $x=e_{12}+e^{12}=e_{12}+\Upsilon_T(e_{12})\in T^\Upsilon$, we get
\[
d_{x,x}=d_{e_{12}+e^{12},e_{12}+e^{12}}=2d_{e_{12},e^{12}}
= -4\bigl( e_2\otimes e^2+e_1\otimes  e^1 -\frac14\id_U\bigr),
\]
that is, $d_{x,x}$ is the element of $\frsl(U)$ whose coordinate matrix in our basis is 
$$
 \diag(-3,-3,1,1,1,1,1,1),
$$
 so that 
$d_{x,x}\in\bi\,\frsu(U,\hup)$. We conclude that 
$d_{T^\Upsilon,T^\Upsilon}\subseteq\bi\,\cS$ and, as in 
Subsection~\ref{ss:E6nonsplit},
that $\bigl(T^\Upsilon,\bi[\cdot,\cdot,\cdot],\bi (\cdot\vert\cdot)\bigr)$ is
a real simple symplectic triple system with inner derivation algebra isomorphic 
to $\fre_{7,-25}$. Moreover,  by Proposition~\ref{pr_signat}, its standard enveloping Lie algebra is, up
to isomorphism, the exceptional Lie algebra $\fre_{8,-24}$.

\bigskip


\section{The classification of the real simple symplectic triple 
systems, up to weak isomorphism}\label{se:classification}

According to the classification of the simple symplectic triple systems 
over $\CC$, reviewed in Section \ref{se:split}, the pairs 
$(\inder(T),T\bigr)$ for a simple symplectic triple system $T$ over $\CC$ with classical envelope $\frg(T)$ are, up to 
isomorphism, those in the following list:
\begin{description}
\item[Special] 
$\bigl(\frgl(W),W\oplus W^*\bigr)$, for a non-zero vector space $W$.

\item[Orthogonal] 
$\bigl(\frsp(V)\oplus\frso(W),V\otimes W\bigr)$, where $V$ is a vector space of dimension $2$ endowed with a non-zero alternating bilinear form $\langle\cdot\vert\cdot\rangle$, and $W$ is a vector space of dimension $\geq 3$ endowed with a 
non-degenerate symmetric bilinear form.

\item[Symplectic]
$\bigl(\frsp(W),W\bigr)$ for a non-zero even-dimensional vector space $W$ endowed with a non-degenerate alternating bilinear form.
\end{description}

Recall that,  by 
Corollary \ref{co:Z2_STS},  in order to classify the real forms up to weak isomorphism (Definition \ref{df_weakisom}), it is enough to classify the real forms of the pairs in the list above.  We will do it now according to the type of $T$.

The results on this section are summarized in the following theorem.

\begin{theorem}\label{th:weak_iso}
Any simple real symplectic triple system is, up to weak isomorphism, one of
the split simple real symplectic triple systems in Section \ref{se:split}: special, orthogonal, symplectic, $G_2$, $F_4$, $E_6$, $E_7$, $E_8$ types, or one of the non-split simple real symplectic triple systems in Section \ref{se:nonsplit}: unitarian and quaternionic types, plus the non-split $E_6$-types (two possibilities), the non-split
$E_7$-types (two possibilities), and the non-split $E_8$-type.
\end{theorem}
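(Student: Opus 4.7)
The starting point is Corollary~\ref{co:Z2_STS}, together with Remark~\ref{re:Z2}: once a real form $\frh$ of the complex inner derivation algebra $\inder(T_\CC)$ is fixed, and a real $\frh$-module $S$ with $S\otimes_\RR\CC\cong T_\CC$ is chosen, the symplectic triple system structure on $S$ is unique up to a scalar, hence unique up to weak isomorphism. The classification therefore reduces to enumerating, for each complex type $T_\CC$ listed at the beginning of Section~\ref{se:split}, the pairs $(\frh,S)$ with $\frh$ a real form of $\inder(T_\CC)$ and $S$ an $\frh$-module whose complexification realises $T_\CC$, and to matching each such pair with an example from Sections~\ref{se:split} or~\ref{se:nonsplit}.

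For the classical envelopes, the plan is to run through the real forms of the classical Lie algebras and test descent of the defining module. For the special case $\inder(T_\CC)=\frgl_n(\CC)$ with module $W\oplus W^*$, the real forms $\frgl_n(\RR)$ and $\fru(p,n-p)$ support the descent, producing respectively the split special type and the unitarian systems of Subsection~\ref{ss:unitarian}, while $\frgl_m(\HH)$ admits no compatible real structure on $W\oplus W^*$. For the symplectic case, only the split form $\frsp_{2n}(\RR)$ supports a real module $W$, giving the split symplectic system. For the orthogonal case, descent of $V\otimes W$ requires the Frobenius--Schur indicators of the two tensor factors to multiply to $+1$, which singles out two families: both factors split (the split orthogonal family of Subsection~\ref{ss:orthogonal}, with arbitrary signature on $W$), or both quaternionic, $\frsp(V)$ becoming $\frsu_2$ and $\frso(W)$ becoming $\frso^*(W)$, yielding the quaternionic systems of Subsection~\ref{ss:quaternionic}.

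For the exceptional envelopes I would proceed case by case through the real forms of each complex inner derivation algebra ($\fra_1$, $\frc_3$, $\fra_5$, $\frd_6$, $\fre_7$). For $G_2$ and $F_4$, a Frobenius--Schur analysis rules out every nonsplit real form of the inner derivation algebra, the distinguished module being quaternionic over them; only the split examples of Subsections~\ref{ss:G2} and~\ref{ss:F4} survive. For $E_6$, the conjugate-linear operator $\Gamma$ built in Subsection~\ref{ss:E6nonsplit} provides the required real structure on $\bigwedge^3W$ exactly when $\Gamma^2=\id$, that is, when $p\in\{3,5\}$, giving the two nonsplit $E_6$-types, while $\frsl_6(\RR)$ yields the split $E_6$ and the remaining real form $\frsl_3(\HH)$ is excluded. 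For $E_7$, the half-spin module of $\frso_{p,q}$ descends exactly when the Brauer invariant recalled at the beginning of Subsection~\ref{ss:E7nonsplit} vanishes, forcing $(p,q)\in\{(6,6),(10,2)\}$; together with the $\frso^*(12)$ case, these yield the split $E_7$ and the two nonsplit examples. For $E_8$, the automorphism $\Upsilon$ of Subsection~\ref{ss:E8nonsplit} descends the $56$-dimensional module when $p$ is even, and the signature bookkeeping given there identifies the resulting enveloping algebras as $\fre_{8,8}$ and $\fre_{8,-24}$.

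The main obstacle is ensuring that no further real form has been overlooked. The cleanest way to close this is via Proposition~\ref{pr_signat}: the signature of the Killing form of the standard enveloping algebra equals one plus the signature of the inner derivation algebra. Combined with the tabulated signatures of the real forms of $\fre_6$, $\fre_7$ and $\fre_8$, this shows that every admissible signature pair is already realised by a construction in Section~\ref{se:nonsplit}, leaving no room for additional weak isomorphism classes.
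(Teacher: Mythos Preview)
Your overall strategy matches the paper's: reduce via Corollary~\ref{co:Z2_STS} to enumerating the real forms of the pairs $\bigl(\inder(T_\CC),T_\CC\bigr)$. The execution for the exceptional envelopes is genuinely different. The paper appeals uniformly to the Cartan index tables in \cite{Oni}, computing $\varepsilon(\frh,\Lambda)$ and checking $s_0(\Lambda)=\Lambda$ for each real form $\frh$; you instead combine Frobenius--Schur type arguments (the $\Gamma^2=\pm\id$ dichotomy, the Brauer class of the even Clifford algebra) with the signature constraint of Proposition~\ref{pr_signat} as a closing sieve. The signature idea is legitimate: once one checks the arithmetic, it eliminates exactly the right real forms of $\fra_5$, $\frd_6$, $\fre_7$ for the $E_6$, $E_7$, $E_8$ envelopes (e.g.\ $\frsu_{4,2}$, $\frsu_6$, $\frsl_3(\HH)$ for $E_6$, and $\fre_{7,-5}$, $\fre_{7,-133}$ for $E_8$). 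This route is arguably more elementary than invoking Cartan index tables. Be aware, though, that the signature sieve would \emph{not} suffice for the $F_4$ envelope, since $\sig(\frsp_3)+1=-20$ is the signature of a genuine real form of $F_4$; it is essential that you have already disposed of $G_2$ and $F_4$ by the direct Frobenius--Schur argument before invoking signatures.

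There is one real gap, in the classical part. In the orthogonal case your Frobenius--Schur argument tacitly assumes that the real form $\frh$ of $\frsl_2(\CC)\oplus\frso_n(\CC)$ decomposes as a direct sum $\frh_1\oplus\frh_2$ of real ideals, so that one can speak of the indicators of the two tensor factors separately. This is automatic for $n\geq 5$, but for $n=3$ one may have $\frh\cong\frsl_2(\CC)$ viewed as a six-dimensional simple real Lie algebra (centroid $\CC$), and for $n=4$ analogous mixed possibilities arise from $\frso_4(\CC)\cong\frsl_2(\CC)\oplus\frsl_2(\CC)$. The paper treats $n=3$ and $n=4$ separately: for $n=3$ it shows explicitly that $\frsl_2(\CC)_\RR$ admits no six-dimensional real representation whose complexification is the required tensor product, and for $n=4$ it reduces to the $n\geq 5$ analysis via the centroid. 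Your plan does not address these cases, so as written it is incomplete there.
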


The proof of this result is achieved in the next subsections.

\medskip

\subsection{Special type}

If $(\frh,U)$ is a real form of a pair of special type, then $\frh=Z(\frh)\oplus[\frh,\frh]$, with the center $Z(\frh)$ of dimension $1$, and $[\frh,\frh]$ simple (or $0$). Moreover, $Z(\frh)=\RR z$ for an element $z$ such that $\rho_z^2=\pm\id$ (by part (2) of Lemma~\ref{le:Z2}, for any  $0\ne x\in Z(\frh)$, there is
a scalar  $\alpha\in\CC$ such that, in the complexification of the pair, we have 
$\rho_{\alpha x}^2= \id$, so that $\alpha$ is either  real or purely imaginary, and the existence of $z$ above follows).

If $\rho_z^2=\id$,  then $\rho_z$ is diagonalizable and we can consider the eigenspace decomposition $U=U_+\oplus U_-$.  Then $U_+$ and $U_-$ are dual modules for the action of $\frh$, because this is the case after extending scalars to $\CC$. Besides $[\frh,\frh]$ embeds in $\frsl(U_+)$ and $\frh$ in $\frgl(U_+)$. By dimension count $\frh$ is, up to isomorphism, $\frgl(U_+)$, and we get the pair $\bigl(\frgl(U_+),U_+\oplus U_+^*\bigr)$, for a non-zero real vector space $U_+$. (\emph{Special type}.)\smallskip

Otherwise $\rho_z^2=-\id$ and $U$ becomes a complex vector space by defining $\bi u=\rho_z(u)$ for any $u\in U$. There is a unique, up to scalars, non-zero $\frh$-invariant symmetric bilinear form $\bup$ on $U$, because this is so after scalar extension.  
Define the $\RR$-bilinear map
$\hup\colon U\times U\rightarrow \CC$ by 
\[
\hup(u,v)=\bup(u,v)+\bi\bup(u,\rho_z(v)).
\]
The $\frh$-invariance of $\bup$ shows that $\hup$ is hermitian and non-degenerate. Moreover, it is also $\frh$-invariant and, by dimension count, it follows that $\frh$ is, up to isomorphism, the unitary Lie algebra $\fru(U,\hup)$. (\emph{Unitarian type}.)

\medskip

\subsection{Symplectic type}

If $(\frh,U)$ is a real form of a pair of symplectic type, then $U$ is endowed with a unique, up to scalars, non-degenerate $\frh$-invariant alternating form $(\cdot\vert \cdot)$  (because this is the case after scalar extension). Thus $\frh$ embeds in $\frsp\bigl(U,(\cdot\vert \cdot)\bigr)$, and 
by dimension count $\rho$ gives an isomorphism $\frh\simeq \frsp\bigl(U,(\cdot\vert \cdot)\bigr)$. (\emph{Symplectic type}.)

\medskip

\subsection{Orthogonal type}

If $(\frh,U)$ is a real form of a pair of orthogonal type, then 
$\frh_\CC=\frh\otimes_\RR\CC$ is isomorphic to 
$\frsl_2(\CC)\oplus\frso_n(\CC)$, with $n\geq 3$, and the module 
$U\otimes_\RR\CC$ ($\cong T$) is isomorphic to the tensor product of the 
natural (two-dimensional) module for $\frsl_2(\CC)$ and the natural 
($n$-dimensional) module for $\frso_n(\CC)$. The space 
$\Hom_\frh(U\otimes_\RR U,\RR)$ is one-dimensional, and its elements 
are skew-symmetric, due to Lemma~\ref{le:Z2} and its proof.  Indeed, 
the same happens after complexification, and each  element 
$\eta\in\Hom_{\inder(T)}(T\otimes_\CC T,\CC)$ provides 
$\eta\otimes \langle\cdot\vert \cdot\rangle\colon
\frg(T)_{\bar1}\otimes\frg(T)_{\bar1}\to\CC$ a   $\frg(T)_{\bar0}$-invariant map, which is symmetric if $\eta$ is skew-symmetric.

If $n\geq 5$, then $\frso_n(\CC)$ is simple and not isomorphic to 
$\frsl_2(\CC)$. Hence $\frh=\frh_1\oplus \frh_2$, with $\frh_1$ a  real form of $\frsl_2(\CC)$ and $\frh_2$ a real form of $\frso_n(\CC)$. Besides, the centralizer 
$\End_{\frh_2}(U)$ is a real form of $M_2(\CC)$, so it is a quaternion algebra. Also, $\frh_1$ embeds naturally in $\End_{\frh_2}(U)$ and hence it embeds isomorphically in its derived subalgebra.

Two possibilities appear. If $\End_{\frh_2}(U)$ is the split quaternion algebra, then $\frh_1$ is isomorphic to $\frsl_2(\RR)$, and $U$, as a module for $\End_{\frh_2}(U)$, is a direct sum of its unique irreducible module (the natural module for $2\times 2$-matrices). It follows that $U=U_1\otimes_\RR U_2$, with $U_1$ the natural module for $\frh_1\simeq \frsl_2(\RR)$, and $U_2$ an irreducible module for $\frh_2$. Besides, Lemma~\ref{le_Vinberg} shows that $U_2$ is endowed with a unique, up to scalars, 
$\frh_2$-invariant non-degenerate symmetric bilinear form $\bup$ and, by dimension count, the action of $\frh_2$ fills $\frso\bigl(U_2,\bup\bigr)$.

Therefore, up to isomorphism, $\frh$ is the direct sum of $\frsl_2(\RR)$ and of the special orthogonal Lie algebra $\frso\bigl(U_2,\bup\bigr)$, and $U$ is the tensor product of the natural modules for $\frsl_2(\RR)$ and $\frso\bigl(U_2,\bup\bigr)$. (\emph{Orthogonal type}.) \smallskip

However, if $\End_{\frh_2}(U)$ is the real division algebra of quaternions 
$\HH=\RR 1+\RR\bi+\RR\bj+\RR\bk$, then $\frh_1$ embeds isomorphically into $\RR\bi+\RR\bj+\RR\bk$. In other words, 
$\frh_1$ is isomorphic to $\frsu_2$. The action of the centralizer $\End_{\frh_2}(U)$ will be considered on the right (note that $[\frh_1,\frh_2]$ equals $0$). Thus $U$ is a (free) right $\HH$-module. Let $\baup$ be a non-zero element in 
$\Hom_\frh(U\otimes_\RR U,\RR)$, so that $\baup$ is a non-degenerate alternating bilinear form on $U$, and in the same vein as for the
special case, consider the $\RR$-bilinear map $\hup\colon U\times U\rightarrow \HH$ given by:
\[
\hup(u,v)=\baup(u,v)-\baup(u,v\bi)\bi-\baup(u,v\bj)\bj-\baup(u,v\bk)\bk.
\]
As we have $\baup(u,v\bi)=-\baup(u\bi,v)=\baup(v,u\bi )$ for any $u,v\in U$ by 
$\frh_1$-invariance, it follows that $\hup$ is a non-degenerate $\frh_2$-invariant skew-hermitian form. By dimension count, $\frh_2$ is isomorphic to the Lie algebra of skew-adjoint endomorphisms of $\End_\HH(U)$ relative to $\hup$, which is the Lie algebra $\frso ^*(U,\hup)\cong\frso_{2n}^*$.

Therefore, in this case, $\frh$ is, up to isomorphism the direct sum of 
$\frsu_2$ and $\frso_{2n}^*$, and $U$ is the natural module for
$\frso_{2n}^*$, with the natural action of $\frsu_2$ commuting with the action of $\frso_{2n}^*$. (\emph{Quaternionic type}.)\smallskip

Now, for $n=4$, $\frso_4(\CC)$ is isomorphic to 
$\frsl_2(\CC)\oplus\frsl_2(\CC)$, while the tensor product of the natural modules for $\frsl_2(\CC)$ and $\frso_4(\CC)$ is, up to isomorphism, the tensor product of the natural modules for each of the three copies of 
$\frsl_2(\CC)$ in $\frsl_2(\CC)\oplus\frso_4(\CC)\simeq
\frsl_2(\CC)\oplus\frsl_2(\CC)\oplus\frsl_2(\CC)$. The centroid of 
$\frsl_2(\CC)\oplus\frsl_2(\CC)\oplus\frsl_2(\CC)$ is 
$\CC\times\CC\times\CC$, and hence the centroid of $\frh$ is either $\RR\times\RR\times\RR$ or $\RR\times\CC$. Therefore $\frh=\frh_1\oplus\frh_2$ for a three-dimensional simple Lie algebra and a six-dimensional semisimple Lie algebra $\frh_2$ which is either the direct sum of two central simple ideals, or a simple Lie algebra with centroid $\CC$. Thus the arguments for $n\geq 5$ work here, and we get either the orthogonal or  the quaternionic type.

Finally, for $n=3$, $\frso_3(\CC)$ is isomorphic to $\frsl_2(\CC)$, 
while the tensor 
product of the natural modules for $\frsl_2(\CC)$ and $\frso_3(\CC)$ is the tensor 
product of the natural module for the first copy of $\frsl_2(\CC)$ and the 
adjoint module for the second copy of $\frsl_2(\CC)$ in $\frsl_2(\CC)\oplus\frso_3(\CC)\simeq\frsl_2(\CC)\oplus\frsl_2(\CC)$. There are two 
possibilities for $\frh$.  Either $\frh$ is the direct sum of two 
three-dimensional simple Lie algebras, so that  the arguments for $n\geq 5$ work, 
or $\frh$ is simple with centroid $\CC$, and hence it is isomorphic to 
$\frsl_2(\CC)$ considered as a real Lie algebra. Note that this is, up to isomorphism, the Lorentz Lie algebra $\frso_{3,1}(\RR)$. But the real Lie algebra 
$\frsl_2(\CC)$ has no six-dimensional representation whose complexification is the tensor product above. (See, for instance, \cite[Example 8.2]{Oni}.)
Therefore this last situation   cannot occur, and again, for $n=3$, we are either in the orthogonal or  in the quaternionic type.

\bigskip

\subsection{Exceptional envelope}\quad

From the classification of the simple symplectic triple systems over 
$\CC$ with exceptional envelope reviewed in Section \ref{se:split}, 
and with the notation used there, the pairs $\bigl(\inder(T),T\bigr)$ are, up to isomorphism, those in the 
next list:
\begin{description}
\item[$G_2$-envelope] $(\fra_1,V(\varpi_3))$,
\item[$F_4$-envelope] $(\frc_3,V(\varpi_3))$,
\item[$E_6$-envelope] $(\fra_5,V(\varpi_3))$,
\item[$E_7$-envelope] $(\frd_6,V(\varpi_6))$,
\item[$E_8$-envelope] $(\fre_7,V(\varpi_1))$.
\end{description}
Note that in all cases, $T$ is an irreducible module for $\inder(T)$.

Let $(\frh,U)$ be a real form of one of the pairs 
$\bigl(\frg,V(\Lambda)\bigr)$ in the list above.
This means that $\frh$ is a real form of $\frg$, and $U$ is the unique
absolutely irreducible module for $\frh$ whose complexification is $V(\Lambda)$.

Given a real form $\frh$ of $\frg$, such an absolutely irreducible module $U$ exists if and only if  (see \cite[\S 8]{Oni})
\begin{equation}\label{eq:Cartan_index}
s_0(\Lambda)=\Lambda\quad\text{and}\quad \varepsilon(\frh,\Lambda)=1,
\end{equation}
where $s_0$ is the automorphism of the Dynkin diagram attached to the real form $\frh$, and $\varepsilon(\frh,\Lambda)$ the Cartan index of the representation of $\frh$ on $U\otimes_\RR\CC\simeq V(\Lambda)$.

For $\fra_1$, $\frc_3$, and $\fre_7$, the Dynkin diagram has no proper automorphisms, and for $\fra_5$, its fundamental weight $\varpi_3$ is invariant under the non-trivial automorphism of the Dynkin diagram, so only for the $E_7$-envelope we must worry about the first restriction in \eqref{eq:Cartan_index}. 

The information contained in \cite[Table 5]{Oni} gives at once the classification of these real forms $(\frh,U)$, up to isomorphism. The corresponding simple real Lie algebra $\frh$ are given in the following list. The corresponding modules $U$ are uniquely determined from $V(\Lambda)$ and 
have been explicitly constructed in Sections \ref{se:split} and \ref{se:nonsplit}.
\begin{description}
\item[$G_2$-envelope] 
 $\frh=\frsl_2(\RR)$, so only the split case appears;

\item[$F_4$-envelope] 
 $\frh=\frsp_6(\RR)$, again only the split case appears,  because   
 $$\textstyle\varepsilon(\mathfrak{sp}_{1,2},\sum_{i=1}^3 \Lambda_i\varpi_i)=
 \varepsilon(\mathfrak{sp}_{3},\sum_{i=1}^3 \Lambda_i\varpi_i)=
 (-1)^{\Lambda_1+\Lambda_3}$$
 are the Cartan indexes of the irreducible representations of the other real forms of $ \frsp_6(\CC)$ (denoted by $\frsp_{1,2}$ and $\frsp_3$ in \cite{Oni});

\item[$E_6$-envelope] 
 $\frh$ is isomorphic to either $\frsl_6(\RR)$ (split case), $\frsu_{3,3}$, or 
 $\frsu_{5,1}$, because we have
 \[
 \begin{split}
 &\varepsilon(\mathfrak{su}_6,\textstyle\sum_{i=1}^5 \Lambda_i\varpi_i)=(-1)^{\Lambda_3},\\
 &\textstyle \varepsilon(\mathfrak{su}_{4,2},\sum_{i=1}^5 \Lambda_i\varpi_i)=(-1)^{5\Lambda_3},\\
 &\textstyle \varepsilon(\mathfrak{sl}_{3}(\HH),\sum_{i=1}^5 \Lambda_i\varpi_i)=(-1)^{\Lambda_1+\Lambda_3+\Lambda_5};
 \end{split}
\]

\item[$E_7$-envelope] 
 $\frh$ is isomorphic to either $\frso_{p,12-p}(\RR)$, where $p$ and $\frac{6-p}{2}$ are even, or to $\frso^*_{12}$. Hence we obtain three different possibilities: $\frso_{6,6}(\RR)$ (split case), $\frso_{10,2}(\RR)$, and $\frso^*_{12}$
   (denoted by $\mathfrak{u}_6^*(\HH) $ in \cite{Oni}), because we have   
 \[
 \varepsilon(\frso^*_{12},\textstyle\sum_{i=1}^6 \Lambda_i\varpi_i)=(-1)^{\Lambda_1+\Lambda_3+\Lambda_5}
 \] 
 (which equals $1$ for $\varpi_6 $), while
\[
 \begin{split}
 &\textstyle \varepsilon(\frso_{12}(\RR),\sum_{i=1}^6 \Lambda_i\varpi_i)=(-1)^{3(\Lambda_5+\Lambda_6)},\\
 &\textstyle \varepsilon(\frso_{4,8}(\RR),\sum_{i=1}^6 \Lambda_i\varpi_i)=(-1)^{ \Lambda_5+\Lambda_6}.
 \end{split}
\]

\item[$E_8$-envelope] 
 $\frh$ is isomorphic to either $\fre_{7,7}$ (split case) or to 
$\fre_{7,-25}$,  because we have
$$
\textstyle \varepsilon(\fre_{7,-133},\sum_{i=1}^7 \Lambda_i\varpi_i)=
 \varepsilon(\fre_{7,-5},\sum_{i=1}^7 \Lambda_i\varpi_i)=
 (-1)^{ \Lambda_1+\Lambda_3+\Lambda_7}.
$$
\end{description}

\section{The classification of the real simple symplectic triple 
systems }\label{se:classificationII}

In this final section, we will prove that weakly isomorphic simple real symplectic triple systems are actually isomorphic, so the list in Theorem \ref{th:weak_iso} 
gives the classification up to isomorphism too.

Over any field $\FF$ with $\FF=\FF^2$, it is trivial to check that weakly isomorphic systems are isomorphic (see the argument prior to Definition \ref{df_weakisom}). 
However, over $\RR$ the situation is much subtler although, eventually, the same result holds: weakly isomorphic simple symplectic triple systems are isomorphic.

Our proof is based on the following preliminary result.
 
\begin{lemma}\label{le:Z4}
Let $T$ be a real symplectic triple system endowed with a 
$\ZZ/4$-grading with support $\{\bar 1,\bar 3\}$. Then $T$ is 
isomorphic to $T^{[-1]}$.
\end{lemma}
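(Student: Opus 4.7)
The plan is to construct an explicit isomorphism $\varphi\colon T\to T^{[-1]}$ directly from the grading, by letting $\varphi$ act as a different scalar on each homogeneous component and choosing the scalars so that the sign changes in $T^{[-1]}$ are exactly compensated.

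First, I would spell out what the $\ZZ/4$-grading with support $\{\bar1,\bar3\}$ imposes. Writing $T=T_{\bar1}\oplus T_{\bar3}$, the compatibility with the triple product gives $[T_{\bar i},T_{\bar j},T_{\bar k}]\subseteq T_{\overline{i+j+k}}$, and the compatibility with the alternating form gives $(T_{\bar i}\vert T_{\bar j})=0$ unless $\bar i+\bar j=\bar0$; in particular, $T_{\bar1}$ and $T_{\bar3}$ are totally isotropic and in duality under $(\cdot\vert\cdot)$.

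Next, I would try the ansatz $\varphi=a\,\id_{T_{\bar1}}+b\,\id_{T_{\bar3}}$ with $a,b\in\RR$ and determine $a,b$ from the conditions defining an isomorphism $T\to T^{[-1]}$, namely $(\varphi(x)\vert\varphi(y))=-(x\vert y)$ and $\varphi([x,y,z])=-[\varphi(x),\varphi(y),\varphi(z)]$. Testing on homogeneous elements gives the system
\begin{equation*}
ab=-1,\qquad b=-a^3,\qquad a=-b^3,
\end{equation*}
(obtained respectively from a mixed pair in the bilinear form, from a triple in $T_{\bar1}\times T_{\bar1}\times T_{\bar1}$ landing in $T_{\bar3}$, and from $T_{\bar3}\times T_{\bar3}\times T_{\bar3}$ landing in $T_{\bar1}$; the remaining two cases give again $ab=-1$). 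Over $\RR$ these equations force $a^4=1$, so $(a,b)=(1,-1)$ (or $(-1,1)$).

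Hence I would define $\varphi$ as $\id$ on $T_{\bar1}$ and $-\id$ on $T_{\bar3}$, and verify once and for all, using the grading, that $\varphi$ preserves the form up to sign and the triple product up to sign on each of the four types of triples $T_{\bar i}\times T_{\bar j}\times T_{\bar k}$ with $i+j+k$ odd (which is automatic from the support condition). This gives $\varphi\colon T\to T^{[-1]}$ an isomorphism. I expect no real obstacle: the only substantive point is the elementary sign bookkeeping, and the fact that the four conditions collapse to the single equation $a^4=1$ is exactly what makes the argument go through over $\RR$ rather than requiring a square root of $-1$.
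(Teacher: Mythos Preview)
Your proposal is correct and arrives at exactly the same isomorphism the paper writes down: the map that is the identity on $T_{\bar 1}$ and $-\id$ on $T_{\bar 3}$. The paper simply declares this map and leaves the verification implicit, whereas you derive it via the ansatz $\varphi=a\,\id_{T_{\bar1}}+b\,\id_{T_{\bar3}}$ and the resulting equations; apart from this extra motivation, the arguments coincide. One small remark: the vanishing $(T_{\bar1}\vert T_{\bar1})=0=(T_{\bar3}\vert T_{\bar3})$ that you invoke is not part of the definition of a grading on the triple product but follows from identity \eqref{eq_dos} (this is precisely the content of the Remark following the lemma in the paper), so you may want to phrase it as a consequence rather than an assumption.
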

\begin{proof}
Let $T=T_{\bar 1}\oplus T_{\bar 3}$ be the given $\ZZ/4$-grading, i.e., 
$[T_{\bar i},T_{\bar j},T_{\bar k}]\subset T_{\bar i+\bar j+\bar k} $. 
Then the linear map $T\rightarrow T$ given by $x\mapsto x$ for 
$x\in T_{\bar 1}$ and $x\mapsto -x$ for $x\in T_{\bar 3}$ is an 
isomorphism $T\rightarrow T^{[-1]}$.
\end{proof}

\begin{remark}
Using that $[x,y,z]-[x,z,y]=(x\vert y)z-(x\vert z)y+2(y\vert z)x$, it is easy 
to see that any grading by an abelian group of $\bigl(T,[\cdot,\cdot,\cdot]\bigr)$ satisfies 
$(T_g\vert T_h)=0$ unless $g+h=0$.
\end{remark}

\begin{proposition}
Any classical simple real symplectic triple system $T$ is isomorphic to $T^{[-1]}$.
\end{proposition}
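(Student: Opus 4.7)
The plan is to apply Lemma~\ref{le:Z4} case by case: for each of the five classical real types supplied by Theorem~\ref{th:weak_iso} (special, symplectic, orthogonal, unitarian, quaternionic), I will exhibit a $\ZZ/4$-grading of $T$ with support $\{\bar 1,\bar 3\}$.

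For the three split classical cases, the grading arises by polarizing $T$ into two totally $(\cdot\vert\cdot)$-isotropic halves. In the special case $T=W\oplus W^*$ set $T_{\bar 1}=W$ and $T_{\bar 3}=W^*$; the grading is immediate from \eqref{eq:special}, since $[W,W,\cdot]=[W^*,W^*,\cdot]=0$ while $[W,W^*,W]\subseteq W$ and $[W,W^*,W^*]\subseteq W^*$. In the symplectic case $T=W$, pick any Lagrangian decomposition $W=L\oplus L'$ (always possible over $\RR$) and put $T_{\bar 1}=L$, $T_{\bar 3}=L'$; the triple product $(x\vert z)y+(y\vert z)x$ is graded because $(L\vert L)=(L'\vert L')=0$. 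In the orthogonal case $T=V\otimes W$, fix a hyperbolic basis $\{u,v\}$ of $V$ and set $T_{\bar 1}=u\otimes W$, $T_{\bar 3}=v\otimes W$; then in \eqref{eq:orthogonal} each monomial carries a factor $\langle\cdot\vert\cdot\rangle$ that records the parity of its arguments.

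For the unitarian case, the plan is to fix an orthogonal basis $\{e_i\}$ of $(W,\hup_W)$ and introduce the conjugate-linear involution $\sigma\colon W\to W$ given by complex conjugation in this basis, which satisfies $\hup_W(\sigma x,\sigma y)=\overline{\hup_W(x,y)}$. Setting $T_{\bar 1}=W^\sigma$ and $T_{\bar 3}=\bi\,W^\sigma$, the restriction of $\hup_W$ to $T_{\bar 1}$ takes real values, so the alternating form $(\cdot\vert\cdot)$ (its imaginary part) vanishes on $T_{\bar 1}$; combined with the prefactor $\bi$ in \eqref{eq_triplecasounitario}, this forces the triple product to respect the grading (e.g.\ $[T_{\bar 1},T_{\bar 1},T_{\bar 1}]\subseteq \bi\,T_{\bar 1}=T_{\bar 3}$, since the three $\hup_W$-factors and $\{x\vert y\}$ become real on $T_{\bar 1}$).

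The quaternionic case is the subtle one and the main obstacle. The plan is to diagonalize $\hup_W$ so that an orthonormal basis $\{e_i\}$ satisfies $\hup_W(e_i,e_j)=\delta_{ij}\bi$, to fix the subfield $\CC_\bj\bydef\RR+\RR\bj\subset\HH$, and to set $T_{\bar 1}=\bigoplus_i e_i\cdot\CC_\bj$ (a right $\CC_\bj$-submodule of $W$) and $T_{\bar 3}=T_{\bar 1}\cdot\bi$. The crucial observation, verified by direct computation on basis elements, is that $\hup_W(T_{\bar 1},T_{\bar 1})\subseteq\RR\bi+\RR\bk$; this simultaneously ensures that $T_{\bar 1}$ is $(\cdot\vert\cdot)$-Lagrangian (vanishing real part) and that the $\HH_0$-valued symmetric map $\{\cdot\vert\cdot\}$ sends $T_{\bar 1}\times T_{\bar 1}$ into $\RR\bi+\RR\bk$. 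Since right multiplication by $\RR\bi+\RR\bk$ interchanges $T_{\bar 1}$ with $T_{\bar 3}$, while right multiplication by $\CC_\bj$ preserves each piece, formula \eqref{eq:triplecaso_quaternionic} becomes graded. The difficulty is that this $\ZZ/4$-grading is hidden by the non-commutativity of $\HH$: one must single out a distinguished complex subfield of $\HH$ relative to which $\hup_W$ separates, and then verify by hand that every component of the triple product respects the induced parity.
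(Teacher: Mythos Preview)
Your proposal is correct and follows essentially the same approach as the paper. In each of the five classical cases you invoke Lemma~\ref{le:Z4} with precisely the same $\ZZ/4$-grading the paper uses: the split cases via an isotropic splitting, and the unitarian and quaternionic cases via a real form of $W$ relative to an orthogonal basis (the paper phrases this as a $\ZZ/4$-grading on the scalars $\CC$ or $\HH$ with $\CC_{\bar 0}=\RR$, respectively $\HH_{\bar 0}=\RR+\RR\bj$, making $T$ a graded module, but the content is identical to your choice of $T_{\bar 1}$ and $T_{\bar 3}$).
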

\begin{proof}
We will check that Lemma~\ref{le:Z4} applies in all cases.

\smallskip

\noindent\emph{Special type:}\quad
$T=W\oplus W^*$ and $\inder(T)=\frgl(W)$ for a 
vector space $W$. Then $T$ is $\ZZ/4$-graded with $T_{\bar 1}=W$ 
and $T_{\bar 3}=W^*$ according to \eqref{eq:special}.

\smallskip

\noindent\emph{Orthogonal type:}\quad
In this case, $T=V\otimes_\RR W$, where $V$ is a two-dimensional vector space endowed with a non-zero skew-symmetric 
bilinear form, and $W$ is a vector space endowed with a non-degenerate 
symmetric bilinear form. Moreover, $\inder(T)=\frsl(V)\oplus\frso(W)$. 
Let $h$ be a diagonalizable element of $\frsl(V)$ with eigenvalues 
$\pm 1$. Then $V=\RR u\oplus\RR v$, with $h.u=u$ and $h.v=-v$. Again 
$T$ is $\ZZ/4$-graded with $T_{\bar 1}=u\otimes W$ and 
$T_{\bar 3}=v\otimes W$, due to \eqref{eq:orthogonal}.

\smallskip

\noindent\emph{Symplectic type:}\quad
Here $T$ is a real vector space endowed with a non-degenerate skew-symmetric form $(\cdot\vert \cdot)$. Take maximal complementary isotropic subspaces $T_{\bar 1}$ and $T_{\bar 3}$. These give a $\ZZ/4$-grading of $T$ as a vector space. Then $(\cdot\vert \cdot)$ is a homogeneous map of degree $\bar 0$, and hence this grading is a grading as a triple system. 

\smallskip

\noindent\emph{Unitarian type:}\quad
Here $T$ is a complex vector space, endowed with a non-degenerate 
hermitian form $\hup\colon T\times T\rightarrow \CC$. 
The skew-symmetric form and triple product on $T$ are given by the following formulas, from \eqref{eq:h_re_im} and \eqref{eq_triplecasounitario}:
\[
\begin{split}
&(x\vert y)=\frac{-\bi}{2}\bigl(\hup(x,y)-\hup(y,x)\bigr),\\
&[x,y,z]=(x\vert y)z+\bi\bigl(\hup(z,x)y+\hup(z,y)x+\hup(x,y)z\bigr).
\end{split}
\]
Consider the $\ZZ/4$-grading on $\CC$ with $\CC_{\bar 0}=\RR 1$, 
$\CC_{\bar 2}=\RR\bi$. Take an `orthonormal $\CC$-basis' $\{u_1,\ldots,u_n\}$, so $\hup(u_r,u_s)=0$ if $r\neq s$, and $\hup(u_r,u_r)$ is either $1$ or $-1$. Then $T$ is a $\ZZ/4$-graded module over $\CC$ with 
$T=T_{\bar 1}\oplus T_{\bar 3}$, where 
$T_{\bar 1}=\sum_{r=1}^{n}\RR u_r$ and 
$T_{\bar 3}=\bi T_{\bar 1}$. It follows that $\hup(\cdot,\cdot)$, considered as a bilinear form over $\RR$, is homogeneous of degree $\bar 2$, and hence both $\bi \hup(\cdot,\cdot)$ and $(\cdot\vert \cdot)$     are  homogeneous of degree $\bar 0$.
Therefore, this $\ZZ/4$-grading on $T$ is a grading as a triple system.

\smallskip

\noindent\emph{Quaternionic type:}\quad
In this case, $T$ is a right $\HH$-vector space, endowed with a non-degenerate
skew-hermitian form $\hup\colon T\times T\rightarrow \HH$. 
The skew-symmetric form and triple product are given by the following formulas, from \eqref{eq:triplecaso_quaternionic} and \eqref{eq:formacaso_quaternionic}:
\[
\begin{split}
&(x\vert y)=\frac{1}{2}\bigl(\hup(x,y)-\hup(y,x)\bigr),\\
&[x,y,z]=x\hup(y,z)+y\hup(x,z)+z\hup(x,y)-z(x\vert y).
\end{split}
\]
Consider the $\ZZ/4$-grading on $\HH$ with $\HH_{\bar 0}=\RR 1+\RR\bj$, 
$\HH_{\bar 2}=\RR\bi+\RR \bk$. Take an $\HH$-basis 
$\{u_1,\ldots,u_n\}$ with $\hup(u_r,u_s)=0$ if $r\neq s$, and $\hup(u_r,u_r)=\bi$. (This is always possible.) Then $T$ is a $\ZZ/4$-graded module over $\HH$ with 
$T=T_{\bar 1}\oplus T_{\bar 3}$, for the real subspaces 
$T_{\bar 1}=\sum_{r=1}^n u_r\HH_{\bar 0}$ and 
$T_{\bar 3}=\sum_{r=1}^n u_r\HH_{\bar 2}$. It follows that $\hup(\cdot,\cdot)$, considered as a bilinear form over $\RR$, is homogeneous of degree $\bar 0$, and so is  $(\cdot\vert \cdot)$,
and hence this $\ZZ/4$-grading on $T$ is a grading as a triple system.
\end{proof}

\bigskip

\begin{proposition}
Any exceptional simple real symplectic triple system $T$ is isomorphic to 
$T^{[-1]}$.
\end{proposition}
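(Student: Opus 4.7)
The plan is to apply Lemma \ref{le:Z4}, so the main task is producing, for each of the exceptional real simple symplectic triple systems in Theorem \ref{th:weak_iso}, a $\ZZ/4$-grading with support $\{\bar 1,\bar 3\}$. The route I would take is to find a semisimple element $h\in\inder(T)$ acting on $T$ with only odd-integer eigenvalues: the resulting $\ZZ$-grading has odd support, and its reduction modulo $4$ is the required $\ZZ/4$-grading. Since $h\in\inder(T)$ it acts automatically as a derivation, so the triple product is homogeneous.

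For the split cases I would use explicit diagonal elements. For $G_2$, $F_4$, $E_6$, $E_7$ take the Cartan element acting as $\pm 1$ on the underlying linear-algebra building block (the two variables $X,Y$ for $G_2$, the six-dimensional space $W$ for $F_4$ and $E_6$, the twelve-dimensional $W\oplus W^*$ for $E_7$), and verify using the formulas of Section \ref{se:split} that the weights on $T$ are precisely $\{\pm 3,\pm 1\}$. For $E_8$ a $\pm 1$-element of $\frsl(U)$ produces even weights on $\bigwedge^2 U$, so I would instead take $h=\tfrac12\mathrm{diag}(7,-1,\ldots,-1)\in\frsl_8(\RR)$, whose weights on $\bigwedge^2 U\oplus\bigwedge^2 U^*$ are $\{-3,-1,1,3\}$.

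For the non-split cases I would adapt the construction to the real form of $\inder(T)$ from Section \ref{se:nonsplit}. When the real rank is large enough, the same diagonal strategy applies: for $\frsu_{3,3}$ pick $h$ acting as $\pm 1$ on a Witt decomposition $W=L\oplus L^*$ (this $h$ lies in the real form because $L,L^*$ are totally isotropic, and it commutes with the antilinear involution $\Gamma$ since $\Gamma$ is built from the Hermitian form and the determinant, both of which $h$ preserves); for $\frso_{10,2}$ a split-Cartan element with weight-coordinate vector $(4,2,0,0,0,0)$ places all weights of the half-spin module in $\{\pm 3,\pm 1\}$; for $\frso^*_{12}$ I would transfer the classical quaternionic-type grading, lifting the decomposition $\HH=(\RR\oplus\RR\bj)\oplus(\RR\bi\oplus\RR\bk)$ through the Clifford construction of Subsection \ref{ss:E7} to the spin representation. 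For $\fre_{7,-25}$ (real rank $3$) I expect a similar Cartan-element argument, adapting the $E_8$-split choice to the $\Upsilon$-fixed subalgebra.

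The main obstacle is $\frsu_{5,1}$ (real rank $1$). Here no element of $\frsu_{5,1}$ has the required eigenvalue pattern on $\bigwedge^3 W$: the split Cartan is only one-dimensional, forcing a four-dimensional zero eigenspace on $W$ and hence a zero weight on $\bigwedge^3 W$. For this case I would abandon the diagonal-element approach and construct the involution of Lemma \ref{le:Z4} directly. The $\CC$-linear sign reflection $\sigma\in U(W,\hup)$ of determinant $-1$ is an anti-automorphism of the complex triple system on $\bigwedge^3 W$, by a computation parallel to the one that produces $T\cong T^{[-1]}$ for split $G_2$ via $X\mapsto -X$; but $\sigma$ anticommutes with $\Gamma$, so it does not directly preserve $T^\Gamma$. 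Combining $\sigma$ with the complex structure $J$ placed on $T^\Gamma$ by the central element of the maximal compact subalgebra should yield an $\RR$-linear involution of $T^\Gamma$ that commutes with $\Gamma$ and still reverses the sign of both the real triple product and the alternating form. Carrying out the commutation check and the sign tracking in this composition is the technical crux.
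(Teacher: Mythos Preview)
Your overall plan (reduce to Lemma~\ref{le:Z4}) matches the paper's, and your Cartan-element approach is fine for all the split cases and for the non-split cases of high real rank ($\frsu_{3,3}$, $\frso_{10,2}$, $\frso^*_{12}$).  The genuine gap is at $\frsu_{5,1}$, where you correctly observe that no semisimple element of $\inder(T)$ can give only odd eigenvalues.  Your proposed fix via a determinant $-1$ reflection $\sigma$ composed with a complex structure $J$ does not close: $\sigma$ anticommutes with $\Gamma$ as you note, but $J$ (coming from the center of the maximal compact) is diagonal in the same basis as $\sigma$, so they commute rather than anticommute, and $J\sigma$ still anticommutes with $\Gamma$.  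Similar issues would hit the $\fre_{7,-25}$ case, which you leave unchecked.

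The paper resolves these uniformly with a trick you are missing: do not look for the grading inside $\inder(T)$ at all.  Instead grade $\CC$ itself by $\ZZ/4$ with $\CC_{\bar 0}=\RR$, $\CC_{\bar 2}=\RR\bi$, and grade the complex vector space $W$ by putting the real span of the chosen orthogonal basis in degree~$\bar 1$ and its $\bi$-multiple in degree~$\bar 3$.  This induces a $\ZZ/4$-grading on $\bigwedge^3 W$ (and analogously on Adams's $56$-dimensional module for $E_8$) with support $\{\bar 1,\bar 3\}$.  The complex bilinear form and triple product are then homogeneous of degree~$\bar 2$, so after multiplying by $\bi$---which is exactly what the construction of the real form does---they become homogeneous of degree~$\bar 0$.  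Finally, the conjugate-linear involution $\Gamma$ (resp.\ $\Upsilon_T$) sends real basis elements to real basis elements and $\bi$ to $-\bi$, hence is homogeneous of degree~$\bar 0$ and the real form $T^\Gamma$ (resp.\ $T^\Upsilon$) inherits the $\ZZ/4$-grading.  This argument is completely insensitive to the signature of~$\hup$, so it handles $\frsu_{3,3}$, $\frsu_{5,1}$, and $\fre_{7,-25}$ simultaneously with no case analysis.
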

\begin{proof}
Again we will check that Lemma~\ref{le:Z4} applies in all cases. \smallskip

\noindent\emph{Split types:}\quad
In the $G_2$-type, $\inder(T)$ is the Lie algebra $\frsl_2$ and $T$ is its 
four-dimensional irreducible module. Let $h$ be a diagonalizable 
element of $\frsl_2$, chosen as usual so that the eigenvalues of 
$\ad_h$ are $0$ and $\pm 2$. Then the eigenvalues of the action of $h$ 
on $T$ are $\pm 1$ and $\pm 3$: 
$T=T_{-3}\oplus T_{-1}\oplus T_1\oplus T_3$. Then $T$ is 
$\ZZ/4$-graded with $T_{\bar 1}=T_{-3}\oplus T_1$ and 
$T_{\bar 3}=T_{-1}\oplus T_3$.\smallskip

In the $F_4$-type, $\inder(T)=\frsp(W,\bup)$ is the symplectic Lie algebra of 
a six-dimensional vector space $W$ with an alternating form $\baup$, and $T$ is the kernel of the 
$\frsp(W)$-invariant linear map $\bigwedge^3W\rightarrow W$: 
$x_1\wedge x_2\wedge x_3\mapsto 
\baup(x_1,x_2)x_3+\baup(x_2,x_3)x_1+\baup(x_3,x_1)x_2$.
As in Subsection~\ref{ss:F4}, once we fix a Cartan 
subalgebra of $\inder(T)$,  the weights of $W$ relative to it are 
$\pm\epsilon_i$, $1\leq i\leq 3$, and, with the natural ordering with 
$\epsilon_1>\epsilon_2>\epsilon_3>0$, $T$ is the irreducible module 
with highest weight $\epsilon_1+\epsilon_2+\epsilon_3$, whose weights 
 are $\pm\epsilon_1\pm\epsilon_2\pm\epsilon_3$ and 
$\pm\epsilon_i$, $1\leq i\leq 3$, all with multiplicity $1$. The 
homomorphism 
$\ZZ\epsilon_1\oplus\ZZ\epsilon_2\oplus\ZZ\epsilon_3\rightarrow 
\ZZ/4$ given by $\epsilon_i\mapsto \bar 1$ gives a $\ZZ/4$-grading of 
$T$ with support $\{\bar 1,\bar 3\}$.\smallskip

In the $E_6$-type, $\inder(T)=\frsl(W)$ for a six-dimensional vector 
space $W$, and $T$ is its module $\bigwedge^3W$. Fix a basis of $W$ 
and take the element $h\in\frsl(W)$ whose coordinate matrix is 
$\diag(1,1,1,-1,-1,-1)$. The eigenspaces of the action of $h$ give a 
$\ZZ$-grading of $T$ with support $\{-3,-1,1,3\}$. As for the 
$G_2$-type, 
$T$ is 
$\ZZ/4$-graded with $T_{\bar 1}=T_{-3}\oplus T_1$ and 
$T_{\bar 3}=T_{-1}\oplus T_3$.\smallskip

In the $E_7$-type, $\inder(T)=\frso(W\oplus W^*)$ for a six-dimensional 
vector space $W$, and 
$T=\bigwedge_{\bar 0}W=\bigoplus_{i=0}^3\bigwedge^{2i}W$. Now $\inder(T)$ is graded by 
$\ZZ/4$ with $\inder(T)_{\bar 0}=\sigma_{W,W^*}$ and $\inder(T)_{\bar 2}=
\sigma_{W,W}\oplus\sigma_{W^*,W^*}$. Then  $T$ is a $\ZZ/4$-graded vector space with
\begin{equation}\label{eq:E7splitZ4} 
\textstyle{T_{\bar 1}=\bigwedge^0W\oplus\bigwedge^4W\quad\text{and}\quad
T_{\bar 3}=\bigwedge^2W\oplus\bigwedge^6W.}
\end{equation} 
The alternating bilinear form  $\baup$
in \eqref{eq:E7_b} is homogeneous of degree $\bar 0$ on $T$, and
\eqref{eq:E7dxy} shows that this $\ZZ/4$-grading on $T$ is a grading as a triple 
system.\smallskip

Finally, in the $E_8$-type, $\inder(T)$ is the Lie algebra of type $E_7$, 
which appears as $\frsl(U)\oplus\bigwedge^4U$, for an 
eight-dimensional vector space $U$. Our symplectic triple system is 
$T=\bigwedge^2U\oplus\bigwedge^2U^*$. This decomposition is a 
$\ZZ/4$-grading with $T_{\bar 1}=\bigwedge^2U$ and 
$T_{\bar 3}=\bigwedge^2U^*$.

\medskip

We will deal now with the remaining non-split cases.  

\noindent\emph{$E_6$-types:}\quad
Use here the notations in Subsection~\ref{ss:E6nonsplit}. Thus $\{e_i: 1\leq i\leq 6\}$ is an orthogonal basis of the six dimensional complex vector space $W$  relative to a non-degenerate hermitian form 
$\hup\colon W\times W\rightarrow \CC$, with $\hup(e_i,e_i)=1$ if $i\le p$ and $-1$ otherwise, with 
$p$ either $3$ or $5$. As a real vector space, $W$ is $\ZZ/4$-graded with 
$W_{\bar 1}=\sum_{i=1}^6\RR e_i$, and 
$W_{\bar 3}=\bi W_{\bar 1}$. This grading on $W$ induces a grading by $\ZZ/4$ on 
$\bigwedge^3W$ with support $\{\bar 1,\bar 3\}$.

We had fixed the `determinant map' $\det\colon \bigwedge^6W\rightarrow \CC$ by imposing 
$\det(e_{123456})=1$. Then the associated skew-symmetric bilinear 
form $(\cdot\vert \cdot)\colon \bigwedge^3W\times\bigwedge^3W\rightarrow \CC$ is homogeneous of 
degree $\bar 2$ (again the $\ZZ/4$-grading on $\CC$ is given by $\CC_{\bar 0}=\RR 1$, 
$\CC_{\bar 2}=\RR\bi$).
Hence the triple product $[\cdot,\cdot,\cdot]$ on $\bigwedge^3W$  is also homogeneous of degree 
$\bar 2$. Moreover, the grading on $\bigwedge^3W$ induces one on its dual  
 ($e^I\in(\bigwedge^3W^*)_{\bar3}$ and $\bi e^I\in(\bigwedge^3W^*)_{\bar1}$),
and the 
(real) linear isomorphisms $\Phi_3$ and $\Psi$ are homogeneous of degree $\bar 2$. It 
turns out that the composition $\Gamma=\Phi_3^{-1}\Psi\colon \bigwedge^3W\rightarrow \bigwedge^3W$ is 
homogeneous of degree $\bar 0$, so the fixed subspace 
$T=\left(\bigwedge^3W\right)^{\Gamma}$ is a graded subspace: 
$T=T_{\bar 1}\oplus T_{\bar 3}$. 
But $\bi[\cdot,\cdot,\cdot]$ and $
\bi(\cdot\vert\cdot)$ are both homogeneous of degree $\bar 0$, because 
multiplication by $\bi$ has degree $\bar 2$. Hence this $\ZZ/4$-grading of the
(real) vector space $T$ is in fact a grading of the symplectic triple system 
$\bigl(T,\bi[\cdot,\cdot,\cdot],
\bi(\cdot\vert\cdot)\bigr)$.\smallskip

\noindent\emph{First $E_7$-type:}\quad
The first non-split simple real symplectic triple system   with envelope of 
type $E_7$, denoted by $T^\Gamma$ in Subsection~\ref{ss:E7nonsplit},
satisfies that $\inder(T^\Gamma)$ is isomorphic to $\frso_{10,2}(\RR)$. Thus 
$\inder(T^\Gamma)$ is the orthogonal Lie algebra of a real vector space $V$ of dimension 
$12$, endowed with a symmetric bilinear form $\bup$ such that that there is a basis 
$\{u_1,u_2,v_1,v_2,w_1,\ldots,w_8\}$ of $V$, with $\bup(u_1,v_1)=\bup(u_2,v_2)=1= 
\bup(w_r,w_r)=0$, $1\leq r\leq 8$, and the remaining values of $\bup$ on vectors of the 
basis  are either $0$ or obtained from the above by symmetry. The linear 
endomorphism $h$ with $h(u_1)=u_1$, $h(v_1)=-v_1$, and $h(u_2)=h(v_2)=h(w_r)=0$ for 
all $r$, lies in the orthogonal Lie algebra $\frso(V,\bup)$. The diagonalizable element 
$h$ acts with eigenvalues $1$, $-1$ and $0$ both on $V$ and on $\frso(V,\bup)$, and 
with eigenvalues $\frac{1}{2}$ and $-\frac{1}{2}$ on the half-spin modules. To see 
this, note that after complexification, it is easy to get a Cartan subalgebra 
containing $h$,  with corresponding root system $\Phi
=\{\pm\epsilon_r\pm\epsilon_s: 1\leq r\neq s\leq 6\}$ 
with $\epsilon_1(h)=1$, $\epsilon_r(h)=0$ for $r\neq 1$, where $\pm\epsilon_r$, 
$1\leq r\leq 6$, are the weights of the natural module. 

Then we get the following $\ZZ/4$-grading of 
$T^\Gamma$: $T^\Gamma=T^\Gamma_{\bar 1}\oplus T^\Gamma_{\bar 3}$, with 
$T^\Gamma_{\bar 1}=\{x\in T^\Gamma: h.x=\frac{1}{2}x\}$, 
$T^\Gamma_{\bar 3}=\{x\in T^\Gamma: h.x=-\frac{1}{2}x\}$.
\smallskip

\noindent\emph{`Quaternionic' $E_7$-type:}\quad
The other non-split simple real symplectic triple system   with envelope of 
type $E_7$ is related to a skew-hermitian non-degenerate form on a quaternionic vector space.

In this case, let $\mathcal{U}$ be a right $\HH$-module of rank $6$ endowed 
with a skew-hermitian non-degenerate form 
$\hup\colon\mathcal{U}\times\mathcal{U}\rightarrow \HH$ as in Subsection \ref{ss:E7nonsplit}. Using the notation there, our simple real symplectic triple
system is $T^{\widetilde{R_\bj}}$, where $T=\bigwedge\subo W$ with
$W=v_1\HH\oplus v_2\HH\oplus v_3\HH$. Recall from Subsection \ref{ss:E7nonsplit}
that $v_1,v_2,v_3$ span a maximal isotropic subspace for $\hup$.

Now, $T$ is $\ZZ/4$-graded as in \eqref{eq:E7splitZ4}. The conjugate-linear map 
$\widetilde{R_\bj}$ preserves the homogeneous components  
$T_{\bar 1}=\bigwedge^0 W\oplus\bigwedge^4W$ and 
$T_{\bar 3}=\bigwedge^2W\oplus
\bigwedge^6 W$, and therefore this $\ZZ/4$-grading on $T$ is inherited by its
real form $T^{\widetilde{R_\bj}}$.

\smallskip

\noindent{$E_8$-type:}\quad
We use the notations in Subsection~\ref{ss:E8nonsplit}. Thus $U$    denotes a complex 
eight-dimensional vector space, endowed with a non-degenerate hermitian 
form $\hup\colon U\times U\rightarrow \CC$ and an orthogonal basis 
$\{e_r: 1\leq r\leq 8\}$, with $\hup(e_r,e_r)=\pm 1$ for all $r$.  
Recall that $\cL=\frsl(U)\oplus \bigwedge^4U$ is the algebra of inner derivations of the complex symplectic triple system $T=\bigwedge^2U\oplus\bigwedge^2U^*$.

As a real vector space, $\bigwedge^2U$ is $\ZZ/4$-graded with 
\[
\textstyle{\bigl(\bigwedge^2U\bigr)_{\bar 1}}=\espan_{\RR}\left\langle e_{rs}: 1\leq r<s\leq 8\right\rangle,
\] and 
$\bigl(\bigwedge^2U\bigr)_{\bar 3}=\bi\bigl(\bigwedge^2U\bigr)_{\bar 1}$. The same happens with $\bigwedge^2U^*$, and hence $T $ is $\ZZ/4$-graded.

The $\ZZ/4$-grading on $U$ given by 
$U_{\bar 0}=\bigoplus_{r=1}^8\RR e_r$ and $U_{\bar 2}=\bi U_{\bar 0}$ 
 induces naturally a  $\ZZ/4$-grading on   $\frsl(U)$ with support $\{\bar 0,\bar 2\}$.
 Also $\bigwedge^4U$ is 
$\ZZ/4$-graded with 
\[
\textstyle{\bigl(\bigwedge^4U\bigr)_{\bar 0}}
=\espan_{\RR}\left\langle e_{r_1 r_2 r_3 r_4}: 
1\leq r_1<r_2<r_3<r_4\leq 8\right\rangle
\] 
and 
$\bigl(\bigwedge^4U\bigr)_{\bar 2}=\bi\bigl(\bigwedge^4U\bigr)_{\bar 0}$. 
So $\cL$ is $\ZZ/4$-graded.

Now, the action $\cL\times T\rightarrow T$ gives a homogeneous map
of degree $\bar 0$,  the bilinear form
$(\cdot\vert\cdot)_{\cL}$ is also homogeneous of degree $\bar 0$, and the 
bilinear form $(\cdot\vert\cdot)$ on $T$ is homogeneous of degree 
$\bar 2$. It follows that the map $d_{.,.}\colon T\times T\rightarrow \cL$, determined by 
$\bigl(l\vert d_{x,y}\bigr)_{\cL}=-2( l.x\vert y)$ is homogeneous of 
degree $\bar 2$.

Finally, $\Upsilon_{T}$ is homogeneous of degree $\bar 0$, so 
the symplectic triple system $ T^\Upsilon$, given by the elements in $T$ fixed 
by $\Upsilon_{T}$, inherits the $\ZZ/4$-grading on $T$, as a real vector 
space, with support $\{\bar 1,\bar 3\}$.
  As the triple product on $T^\Upsilon$ is given by $[x,y,z]_{T^\Upsilon}=\bi d_{x,y}(z)$, for all 
$x,y,z\in T^\Upsilon$, and the map $\bi d_{.,.}$ is homogeneous of degree $\bar 0$,
so the $\ZZ/4$-grading on $T^\Upsilon$ is actually a $\ZZ/4$-grading as a (real) triple system,   not just as a vector space.\end{proof}

\bigskip

As a consequence, Theorem \ref{th:weak_iso} may be reformulated, replacing the words \emph{`up to weak isomorphism'} by \emph{`up to isomorphism'}, and this completes the classification of the simple real symplectic triple systems up to isomorphism.

We conclude the paper by stating this classification in a different way as follows:

\begin{theorem}\label{th_main}
If $T$ is a real simple  symplectic triple system,  then
  {$(\g(T),\mathfrak{inder}(T))$} belongs to the following list:
 \begin{enumerate}
  \item  {Symplectic}: $(\mathfrak{sp}_{2n+2}(\RR),\mathfrak{sp}_{2n}(\RR)), $

\item Orthogonal:  {$(\mathfrak{so}_{p+2,q+2}(\RR),\mathfrak{so}_{p,q}(\RR)\oplus\mathfrak{sl}_2(\RR)), $}

  \item {Quaternionic}: $(\mathfrak{so}^*_{2n+4},\mathfrak{so}^*_{2n}\oplus\mathfrak{su}_2),$ 
 
   \item  Special:  {$(\mathfrak{sl}_{2+n}(\RR),\mathfrak{gl}_{n}(\RR)),$}
   
   \item Unitarian: $( \mathfrak{su}_{p+1,n+1-p},\mathfrak{u}_{p,n-p}), $\vspace{2pt}
    
  \item  {Exceptional}: $ \left\{\text{\begin{tabular}{l}
   $(\mathfrak{g}_{2,2},\mathfrak{sl}_2(\RR))$, \\
   $(\mathfrak{f}_{4,4},\mathfrak{sp}_6(\RR))$,\\
 $(\mathfrak{e}_{6,6},\mathfrak{sl}_6(\RR))$,
  $(\mathfrak{e}_{6,2},\mathfrak{su}_{3,3})$,
  $(\mathfrak{e}_{6,-14},\mathfrak{su}_{5,1})$,\\
$(\mathfrak{e}_{7,7},\mathfrak{so}_{6,6}(\RR))$,
 $( \mathfrak{e}_{7,-5},\mathfrak{so}_{12}^*)$,
 $(\mathfrak{e}_{7,-25},\mathfrak{so}_{10,2}(\RR))$,\\
$(\mathfrak{e}_{8,8},\mathfrak{e}_{7,7})$,
 $(\mathfrak{e}_{8,-24},\mathfrak{e}_{7,-25})$.\end{tabular}}\right.$
\end{enumerate} 
  Moreover, if  $T$ and $T'$ are real simple  symplectic triple systems with isomorphic inner derivation algebras and isomorphic enveloping algebras, then $T$ and $T'$ are isomorphic too.
 \end{theorem}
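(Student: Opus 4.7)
The plan is to assemble the statement from three ingredients already established in the paper: the weak-isomorphism classification, the uniqueness-of-scalars statement in Corollary~\ref{co:Z2_STS}, and the two propositions just proved in Section~\ref{se:classificationII} showing that $T\cong T^{[-1]}$ for every simple real symplectic triple system $T$.

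\textbf{Producing the list (1)--(6).} First I would run through Theorem~\ref{th:weak_iso}: every simple real symplectic triple system is weakly isomorphic to one of the explicit models built in Section~\ref{se:split} (split families) or Section~\ref{se:nonsplit} (non-split families). For each of these models I would read the pair $\bigl(\g(T),\inder(T)\bigr)$ directly from the construction. In the classical families the envelope is immediately visible as a special, orthogonal, symplectic, $\frsu$, or $\frso^*$ Lie algebra of the signature that the construction imposes. For the exceptional families the real form of $G_2,F_4,E_6,E_7,E_8$ that appears as $\g(T)$ is pinned down by its Killing signature, which is obtained from Proposition~\ref{pr_signat}: the signature of $\g(T)$ equals one plus the signature of $\inder(T)$, and this single number identifies the real form unambiguously. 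Running through the ten weak-isomorphism classes produces exactly the ten pairs displayed in (1)--(6).

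\textbf{Uniqueness.} Now assume $T$ and $T'$ are simple real symplectic triple systems with $\inder(T)\cong\inder(T')$ and $\g(T)\cong\g(T')$. Comparing the semisimple parts and signatures of the ten pairs in the list shows that these pairs are pairwise non-isomorphic, so $T$ and $T'$ must sit in the same entry of the catalogue and are therefore weakly isomorphic. (Alternatively, this is exactly the content of Corollary~\ref{co:Z2_STS}: the triple product and the alternating form on $T'$ are determined up to a common scalar by the representation of $\inder(T')$, so $T'$ is isomorphic to some shift $T^{[\alpha]}$, which over $\RR$ is in turn isomorphic to $T$ or $T^{[-1]}$.) Finally, invoking the two propositions proved immediately before the statement of Theorem~\ref{th_main}, which assert $T\cong T^{[-1]}$ in every classical and every exceptional case, one upgrades weak isomorphism to isomorphism and concludes $T\cong T'$.

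\textbf{Where the work lies.} The main technical obstacle is not in the assembly above, which is essentially bookkeeping, but in the inputs. The delicate step is the upgrade from weak isomorphism to isomorphism, carried out through Lemma~\ref{le:Z4}: one must exhibit, for every entry of the list, a $\ZZ/4$-grading of $T$ with support $\{\bar 1,\bar 3\}$. The construction of such a grading is genuinely different between the special/orthogonal/symplectic families (diagonalising a suitable element of $\inder(T)$), the unitarian and quaternionic families (using the grading of $\CC$ or $\HH$ with $\CC_{\bar 0}=\RR$, $\CC_{\bar 2}=\RR\bi$, respectively $\HH_{\bar 0}=\RR 1+\RR\bj$, $\HH_{\bar 2}=\RR\bi+\RR\bk$), and each of the non-split exceptional families (where the grading must be inherited from a real form $\Gamma$ and the grading on $\bigwedge W$ by parity of degree modulo $4$). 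Case-by-case verification of this grading, already done in the two propositions of Section~\ref{se:classificationII}, is where the genuine work lives.
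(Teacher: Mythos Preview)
Your proposal is correct and follows essentially the same route as the paper: the theorem is assembled from Theorem~\ref{th:weak_iso} (the weak-isomorphism catalogue), the explicit identification of $\bigl(\g(T),\inder(T)\bigr)$ for each model (with Proposition~\ref{pr_signat} pinning down the exceptional envelopes), and the two propositions of Section~\ref{se:classificationII} that collapse weak isomorphism to isomorphism. One small wording slip: you refer to ``the ten weak-isomorphism classes'' and ``the ten pairs'', but items (1)--(5) are infinite families indexed by $n$ or $(p,q)$, and only item (6) consists of exactly ten pairs; the pairwise-distinctness check for uniqueness therefore has to run over the whole (infinite) list, not just ten entries.
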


\end{document}